\documentclass{tran-l}
\usepackage{enumerate}
\usepackage{amssymb}
\usepackage{xfrac}
\usepackage{tikz}
\usepackage{thm-restate}
\usepackage{hyperref}
\usepackage{cleveref}
\usepackage{tcolorbox}
\tcbset{
  mybox/.style={
    colback=gray!8,
    colframe=gray!60,
    boxrule=0.5pt
  }
}

\newtheorem{theorem}{Theorem}[section]
\newtheorem{lemma}[theorem]{Lemma}

\theoremstyle{definition}
\newtheorem{definition}[theorem]{Definition}
\newtheorem{example}[theorem]{Example}
\newtheorem{proposition}[theorem]{Proposition}
\newtheorem{corollary}[theorem]{Corollary}
\newtheorem{claim}[theorem]{Claim}

\theoremstyle{remark}
\newtheorem{remark}[theorem]{Remark}

\numberwithin{equation}{section}

\newcommand{\argmax}{\mathop{\mathrm{argmax}}\limits}

\def\R{\mathbb{R}}
\def\C{\mathcal{C}}

\let\oldtocsection=\tocsection

\let\oldtocsubsection=\tocsubsection

\let\oldtocsubsubsection=\tocsubsubsection

\renewcommand{\tocsection}[2]{\hspace{0em}\oldtocsection{#1}{#2}}
\renewcommand{\tocsubsection}[2]{\hspace{1em}\oldtocsubsection{#1}{#2}}
\renewcommand{\tocsubsubsection}[2]{\hspace{2em}\oldtocsubsubsection{#1}{#2}}

\begin{document}

\title[Ergodic optimization with linear constraints]{Ergodic optimization with linear constraints}

%    Information for first author
\author{Shengwen Guo}
%    Address of record for the research reported here
\address{Department of Mathematics, St. Bonaventure University, St. Bonaventure, New York 14778}
%    Current address
%\curraddr{Department of Mathematics and Statistics, Case Western Reserve University, Cleveland, Ohio 43403}
\email{sguo@sbu.edu}
%    \thanks will become a 1st page footnote.
%\thanks{The first author was supported in part by NSF Grant \#000000.}

%    Information for second author
\author{Kevin McGoff}
\address{Department of Mathematics and Statistics, University of North Carolina at Charlotte, Charlotte, North Carolina 28223}
\email{kmcgoff1@charlotte.edu}
%\thanks{Support information for the second author.}

%    General info
\subjclass[2000]{Primary: 37A05; Secondary: 37D20, 49Q20, 49N15}

%\date{January 1, 2001 and, in revised form, June 22, 2001.}

%\dedicatory{This paper is dedicated to our advisors.}

\keywords{Ergodic optimization, optimal transport, linear optimization}

\begin{abstract}
Let $T : X \to X$ be a continuous map of a compact metrizable space, and let $\phi : X \to \mathbb{R}$ be a continuous function. The ergodic optimization problem is to  maximize the integral $\int \phi  \, d\mu$ as $\mu$ ranges over all $T$-invariant Borel probability measures on $X$. In this paper we consider a constrained version of the ergodic optimization problem. Given a `constraint set' $\C\subset C(X)$, let $M_\C(X,T)$ be the set of $T$-invariant Borel probability measures $\mu$ on $X$ such that $\int g \, d\mu = 0$ for all $g \in \C$. We investigate the problem of maximizing the integral $\int \phi \, d\mu$ over the constrained set $M_\C(X,T)$. We address basic properties of this optimization problem, beginning with nonemptiness of $M_\C(X,T)$ and existence of optimal solutions. Additionally, we establish the generic and prevalent uniqueness of optimal measures, we provide a realization result, and we give a characterization of the dual problem. This framework provides a common generalization of several previously considered optimization problems in dynamical systems and optimal transport.
\end{abstract}

\maketitle

% \section*{This is an unnumbered first-level section head}
% This is an example of an unnumbered first-level heading.

%% The correct journal style for \specialsection is all uppercase; a known bug
%% in amsart.cls prevents this, so input must be uppercase until it is fixed.
%\specialsection*{This is a Special Section Head}
% \specialsection*{THIS IS A SPECIAL SECTION HEAD}
% This is an example of a special section head%
%%%%%%%%%%%%%%%%%%%%%%%%%%%%%%%%%%%%%%%%%%%%%%%%%%%%%%%%%%%%%%%%%%%%%%%%
% \footnote{Here is an example of a footnote. Notice that this footnote
% text is running on so that it can stand as an example of how a footnote
% with separate paragraphs should be written.
% \par
% And here is the beginning of the second paragraph.}%
%%%%%%%%%%%%%%%%%%%%%%%%%%%%%%%%%%%%%%%%%%%%%%%%%%%%%%%%%%%%%%%%%%%%%%%%

%\tableofcontents

\section{Introduction} \label{Sect:Intro}

%Intro about ergodic optimization, including definition, motivation, references, and general direction of recent work.

Ergodic optimization has received substantial attention in recent decades; for an introduction with references, see the surveys \cite{jenkinson2006ergodic,jenkinson2019ergodic}. Throughout this work we assume that $X$ is a non-empty compact metrizable space and $T : X \to X$ is continuous, and we refer to the pair $(X,T)$ as a topological dynamical system. Let $M(X,T)$ denote the set of $T$-invariant Borel probability measures on $X$. The general problem of ergodic optimization can be described as follows. Let $(X,T)$ be a topological dynamical system, and let $\phi : X \to \mathbb{R}$ be a continuous function. Then the ergodic optimization problem is
\begin{equation} \label{Eqn:EO}
    \sup_{\mu \in M(X,T)} \int \phi \, d\mu.
\end{equation}
This problem is typically motivated by connections to maximal ergodic averages and the thermodynamic formalism. In particular, measures $\mu \in M(X,T)$ that achieve the supremum in (\ref{Eqn:EO}) appear as ground states or zero-temperature limits of equilibrium states. Much of the recent work on ergodic optimization has focused on establishing properties of the optimizing measures, particularly giving conditions guaranteeing that there is a unique optimizing measure that is supported on a periodic orbit (e.g., see \cite{contreras2016ground,huang2019ergodic,huang2026typical}).

Here we introduce the problem of ergodic optimization with linear constraints, which can be viewed as a localized or relativized version of the ergodic optimization problem. As the examples below illustrate, the framework of ergodic optimization with linear constraints serves as a common generalization of several optimization problems that have previously appeared in the literature, including (unconstrained) ergodic optimization and optimal transport. 

Let $T: X \to X$ and $\phi : X \to \mathbb{R}$ be as above, and let $C(X)$ be the space of continuous functions from $X$ to $\mathbb{R}$. For any subset $\C \subset C(X)$, we define
\begin{equation*}
    M_{\C}(X,T) = \left\{ \mu \in M(X,T) : \forall g \in \C, \, \int g \, d\mu = 0 \right\}.
\end{equation*}
We refer to $\C$ as the constraint set and $M_{\C}(X,T)$ as the feasible set. Then the problem of ergodic optimization with linear constraints is 
\begin{equation} \label{Eqn:EOWC}
\sup_{\mu \in M_{\C}(X,T)} \int \phi \, d\mu.
\end{equation}

Let us now present several classes of examples to illustrate some types of problems that can be described as ergodic optimization problems with linear constraints.

\begin{example}[Ergodic optimization] \label{Example1}
    If $\C = \varnothing$, then $M_{\C}(X,T) = M(X,T)$, and we recover the standard setting of unconstrained ergodic optimization.
\end{example}

\begin{example}[Ergodic optimization on a subsystem]
\label{Example2}
    Suppose $Y \subset X$ is non-empty, closed and satisfies $T(Y) \subset Y$, so that $(Y,T|_Y)$ is a subsystem of $(X,T)$. If we let $\C = \{ g \in C(X) : g|_Y \equiv 0 \}$, then any measure in $M_{\C}(X,T)$ is supported on $Y$, and ergodic optimization with constraint set $\C$ is equivalent to ergodic optimization on the subsystem $(Y,T|_Y)$. 
\end{example}

\begin{example}[Ergodic optimization over a rotation vector]
\label{Example3}
    Let $f_1,\dots,f_d$ be in $C(X)$, and let $\Phi : M(X,T) \to \R^d$ be given by 
    \begin{equation*}
        \Phi(\mu) = \left( \int f_1 \, d\mu, \dots, \int f_d \, d\mu \right). 
    \end{equation*}
    Here $\Phi(\mu)$ is called the rotation vector associated to $\mu$, and the image set $\Phi(M(X,T))$ is called a generalized rotation set. These sets have been studied in \cite{geller1999rotation, jenkinson2001rotation, kucherenko2014geometry, ziemian1995rotation}, and in particular, some authors have considered the thermodynamic formalism on sets of the form $\Phi^{-1}(v)$, where $v = (v_1,\dots,v_d) \in \R^d$ \cite{kucherenko2015localized}. If we choose $\C = \{ f_1 - v_1, \dots, f_d - v_d\}$, then we have $M_{\C}(X,T) = \Phi^{-1}(v)$, and hence ergodic optimization with these constraints corresponds to ergodic optimization over a particular rotation vector. Such constrained ergodic optimization has been considered in \cite{garibaldi2007functions}. 
\end{example}

\begin{example} [Relative ergodic optimization over a measure]
\label{Example4}
    %Relative EO over a measure. Note result of McGoff and Nobel.
    Suppose we have two topological dynamical systems $(X,T)$ and $(Y,S)$ together with a continuous map $\pi : X \to Y$ such that $\pi \circ T = S \circ \pi$. For a fixed measure $\nu \in M(Y,S)$, let 
    \begin{equation*}
        \C = \left\{ f \circ \pi - \int f \, d\nu : f \in C(Y) \right\}.
    \end{equation*}
    Then $M_{\C}(X,T)$ consists of all measures $\mu$ in  $M(X,T)$ that factor onto $\nu$ under $\pi$, i.e., $\pi_* \mu = \nu$. Thus the ergodic optimization problem with these constraints can be described as relative ergodic optimization over the measure $\nu$. Several papers have considered  measures of maximal relative entropy and the relative thermodynamic formalism over a measure \cite{allahbakhshi2019relative,ledrappier1977relativised,petersen2003measures,walters1986relative,yoo2017decomposition}. We also note that the problem of ergodic optimization with constraints of this type has appeared in the context of statistical inference for dynamical systems \cite{mcgoff2016variational}.
\end{example}

\begin{example}[Ergodic optimal transport] \label{Example5}
    Suppose we have two topological dynamical systems $(X_i,T_i)$ with measures $\mu_i \in M(X_i,T_i)$, for $i = 1,2$. Let $(X,T)$ be the direct product system, given by $X = X_1 \times X_2$ and $T(x_1,x_2) = (T_1(x_1),T_2(x_2))$, and let $\pi_i : X \to X_i$ be the coordinate projection map for $i = 1,2$. Now consider the constraint set 
    \begin{equation*}
        \C = \left\{ g \circ \pi_1 - \int g \, d\mu_1 : g \in C(X_1) \right\}  \cup \left\{ g \circ \pi_2 - \int g \, d\mu_2 : g \in C(X_2) \right\}.
    \end{equation*}
    Then $M_{\C}(X,T)$ is the set of all invariant measures for the product system that have marginals $\mu_1$ and $\mu_2$, i.e., $M_\C(X,T)$ is the set of all joinings of $\mu_1$ and $\mu_2$. Joinings were introduced by Furstenberg \cite{furstenberg1967disjointness} and have played an important role in ergodic theory (see the survey \cite{de2005introduction} and the book \cite{glasner2003ergodic}). Thus ergodic optimization with these constraints can be viewed as an ergodic optimal transport problem between $\mu_1$ and $\mu_2$. If $\phi : X_1 \times X_2 \to \R$ is the function to be maximized in the ergodic optimization problem, then $-\phi$ plays the role of the cost function from the point of view of optimal transport. We also note that the optimization problems that appear in the definitions of Ornstein's $\overline{d}$-metric \cite{ornstein1973application} and the more general $\overline{\rho}$-metric \cite{gray1975generalization} can be viewed as special cases of ergodic optimization with linear constraints of this type. Note that the `optimal transport' problem in \cite{lopes2012duality,lopes2015entropy} is over a different feasible set from the `ergodic optimal transport' problem here. %.  if $(X_1,T_1) = (X_2,T_2)$ and  with an appropriate choice of objective function $\phi \in C(X)$ in this setting, ergodic optimization with these constraints recovers Ornstein's $\overline{d}$-metric \cite{ornstein1973application}, as well as the more general $\overline{\rho}$-metric \cite{gray1975generalization}. 
\end{example}

\begin{remark}
    By taking the union of several constraint sets, one may consider constrained problems with combinations of the above constraint types. For example, by combining Examples \ref{Example3} and \ref{Example5}, one could consider ergodic optimal transport over a set of joinings with a fixed rotation vector. 
    %Note that one can let $\C$ be the union of the constraint sets among \Cref{Example1}-\ref{Example5} to explore ergodic optimization with both linear constraints. 
    %For instance, if we let
    %\[
    %\C = \{ f_1 - v_1, \dots, f_d - v_d\} \cup \left\{ f \circ \pi - \int f \, d\nu : f \in C(Y) \right\}
    %\]
    %be the union of the constraint sets in \Cref{Example3} and \Cref{Example4}, then (\ref{Eqn:EOWC}) turns to ergodic optimization over the set of all invariant measures with $\mu\circ\pi^{-1}=\nu$ and $\Phi(\mu)=v$.
\end{remark}

Our results address questions of existence, uniqueness, realization, and duality for ergodic optimization with linear constraints. We begin with a characterization of when the feasible set $M_{\C}(X,T)$ is non-empty and when solutions exist. Note that in the unconstrained case we have $M_{\C}(X,T) = M(X,T)$, which is known to be non-empty by the Krylov-Bogolioubov Theorem (see \cite[p.152]{walters2000introduction}). We prove the following theorem in Section \ref{Sect:Existence}.

\begin{restatable}[Existence]{theorem}{existence}
  \label{Thm:Existence}
  Let $(X,T)$ be a topological dynamical system and $\C \subset C(X)$. Let $H$ be the smallest closed $T$-invariant linear subspace of $C(X)$ containing $\C$.  Then the following are equivalent:
    \begin{enumerate}[(i)]
        \item $M_{\C}(X,T) = \varnothing$;
        \item there exists $g \in H$ such that $g(x) > 0$ for all $x \in X$.
    \end{enumerate}
    Furthermore, if $M_{\C}(X,T)$ is non-empty, then there exists $\mu^* \in M_{\C}(X,T)$ such that
    \begin{equation*}
        \int \phi \, d\mu^* = \sup_{\mu \in M_{\C}(X,T)} \int \phi \, d\mu.
    \end{equation*}
\end{restatable}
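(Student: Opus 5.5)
We prove the equivalence first and then existence of optimizers.

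\emph{(ii) $\Rightarrow$ (i).} If $\mu \in M_\C(X,T)$, consider the set $\{g \in C(X) : \int g\,d\mu = 0\}$. It is a closed linear subspace, since integration against $\mu$ is a continuous linear functional. It is $T$-invariant, because $\int g\circ T\,d\mu = \int g\,d\mu$. It contains $\C$. By minimality it therefore contains $H$. But a function $g \in H$ with $g>0$ everywhere satisfies $\int g\,d\mu \ge \min g > 0$ by compactness of $X$. This is a contradiction.

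\emph{(i) $\Rightarrow$ (ii).} We argue the contrapositive with a Hahn--Banach separation. Suppose no $g\in H$ is strictly positive. Let $P=\{f\in C(X): f(x)>0 \ \forall x\}$, which is an open convex cone in $C(X)$ (open because $X$ is compact). Then $P\cap H=\varnothing$, so the geometric Hahn--Banach theorem gives a nonzero continuous linear functional $\Lambda$ on $C(X)$ and a constant $c$ with $\Lambda(h) \le c < \Lambda(f)$ for all $h\in H$ and $f\in P$.

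Since $H$ is a linear subspace, $\Lambda$ must vanish on $H$, so we may take $c=0$. Since $P$ is a cone and $\Lambda>0$ on $P$, letting $f$ range over $\epsilon + $ nonnegative functions shows $\Lambda \ge 0$ on nonnegative functions. Also $\Lambda(1)>0$. By the Riesz representation theorem, $\Lambda$ is integration against a positive finite Borel measure, and after normalizing we get a probability measure $\mu$ with $\int h\,d\mu=0$ for all $h\in H$.

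It remains to see that $\mu$ is $T$-invariant. We must have $\int (f\circ T - f)\,d\mu = 0$ for all $f\in C(X)$, but coboundaries $f\circ T-f$ need not lie in $H$. So we will modify the argument by enlarging $H$ to $H' = \overline{H + \{f\circ T - f : f\in C(X)\}}$.

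The key step is to show that $H'$ still contains no strictly positive function. Suppose $h + f\circ T - f \ge \delta > 0$ for some $h\in H$ and $f \in C(X)$ (a strictly positive element of the closure yields such an element of the sum, by openness of $P$). Averaging along orbits gives
\begin{equation*}
\frac1n\sum_{k<n} h\circ T^k + \frac{f\circ T^n - f}{n} \ge \delta .
\end{equation*}
The averages $A_n h = \frac1n\sum_{k<n} h\circ T^k$ lie in $H$ by $T$-invariance and linearity, and $\|(f\circ T^n-f)/n\|_\infty\le 2\|f\|/n$. So for large $n$ we have $A_n h \ge \delta/2>0$ with $A_nh\in H$, which contradicts the assumption. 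Applying the separation argument to $H'$ then produces a probability measure annihilating both $H\supset\C$ and all coboundaries, i.e.\ an element of $M_\C(X,T)$.

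\emph{Existence of optimizers.} The set $M_\C(X,T)$ is an intersection of the weak$^*$-compact set $M(X,T)$ with the weak$^*$-closed sets $\{\mu : \int g\,d\mu=0\}$ for $g \in \C$. Hence it is weak$^*$ compact. The map $\mu\mapsto\int\phi\,d\mu$ is weak$^*$ continuous, so if $M_\C(X,T)$ is nonempty the supremum is attained.

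The main obstacle is the invariance issue in the separation step. It is handled by adjoining the coboundaries and using the ergodic-averaging estimate above, and this is exactly where the $T$-invariance of $H$ is needed.
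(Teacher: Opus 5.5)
Your argument is correct. The direction (ii)$\Rightarrow$(i) and the existence of an optimizer match the paper, but you handle the hard direction by a genuinely different route. The paper works on the measure side in two stages. First it extends $\lambda\mathbf{1}+f\mapsto\lambda$ from $\mathrm{span}(H,\mathbf{1})$ to a positive functional on $C(X)$, which gives a probability measure $\tilde\mu$ that annihilates $H$ but need not be invariant. It then applies the Krylov--Bogolyubov-type Lemma~\ref{Lem:Existence}: it takes a weak$^*$ limit point of the Ces\`aro averages of the push-forwards $\tilde\mu\circ T^{-i}$, and uses the $T$-invariance of $H$ to see that each average, hence the limit, still annihilates $H$. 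You instead move the averaging to the function side. Applying $A_n$ to a hypothetical $h+f\circ T-f\ge\delta$ shows that adjoining all coboundaries to $H$ creates no strictly positive function, so a single geometric Hahn--Banach separation produces a measure that is invariant from the outset, with no compactness or subsequence extraction. Your version is shorter. It also yields the intermediate criterion that $M_\C(X,T)\neq\varnothing$ if and only if $H+\{f\circ T-f: f\in C(X)\}$ contains no strictly positive function, which is the natural dual formulation and fits with Theorem~\ref{Thm:Duality}. The paper's version instead isolates the Krylov--Bogolyubov step as a reusable lemma: a $T$-invariant family annihilated by some Borel probability measure is annihilated by an invariant one. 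That is the form of the argument relevant to Example~\ref{Exp:Invfinitemany}.
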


% \begin{theorem}[Existence]
%     Let $(X,T)$ be a topological dynamical system and $\C \subset C(X)$. Let $H$ be the smallest closed $T$-invariant linear subspace of $C(X)$ containing $\C$.  Then the following are equivalent:
%     \begin{enumerate}[(i)]
%         \item $M_{\C}(X,T) = \varnothing$;
%         \item there exists $g \in H$ such that $g(x) > 0$ for all $x \in X$.
%     \end{enumerate}
%     Furthermore, if $M_{\C}(X,T)$ is non-empty, then there exists $\mu^* \in M_{\C}(X,T)$ such that
%     \begin{equation*}
%         \int \phi \, d\mu^* = \sup_{\mu \in M_{\C}(X,T)} \int \phi \, d\mu.
%     \end{equation*}
% \end{theorem}

We refer to any measure $\mu \in M_{\C}(X,T)$ that achieves the supremum in (\ref{Eqn:EOWC}) as an optimal measure. Let $M_{\C}^*(X,T;\phi)$ denote the set of all optimal measures. 
When solutions exist, one may wish to know when they are unique, i.e., when there is precisely one optimal measure. In general there may be more than one optimal measure. For example, if $M_{\C}(X,T)$ contains more than one measure and $f$ is a constant function, then the solution set $M_{\C}^*(X,T;\phi)$ is equal to $M_{\C}(X,T)$, and we have non-uniqueness. (For more instances of non-uniqueness, see our realization results below.) On the other hand, the following result guarantees that there exists a unique solution for `typical' objective functions $\phi$, where `typical' is made precise in both topological and measure-theoretic senses. This theorem generalizes previous results addressing the unconstrained case \cite{jenkinson2006ergodic,morris2021prevalent}. To the best of our knowledge this type of result has not appeared in the settings of Examples  \ref{Example3}-\ref{Example5}.

To state the theorem, let $|A|$ denote the cardinality of any set $A$, and then let
\begin{equation*}
    U_\C = \bigl\{ \phi \in C(X) : |M_{\C}^*(X,T;\phi)|=1 \bigr\}.
\end{equation*}
A set is said to be residual if it is a countable intersection of sets with dense interior, giving a precise sense in which the set is topologically large. The notion of prevalence, introduced in \cite{hunt1992prevalence}, gives a precise sense in which a set is measure-theoretically large (see Section \ref{Sect:Prevalence} for details). Our proof of the following theorem appears in Section \ref{Sect:Uniqueness}. 

\begin{theorem}[Uniqueness] \label{Thm:UniquenessIntro}
    Let $(X,T)$ be a topological dynamical system and $\C \subset C(X)$. If $M_{\C}(X,T)$ is non-empty, then $U_\C \subset C(X)$ is both residual and prevalent.
\end{theorem}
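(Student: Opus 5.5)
The approach is to reduce the statement to the classical convex-analytic uniqueness results for compact convex sets in the dual of a separable Banach space. We apply these to the compact convex set $K = M_{\C}(X,T)$.

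\textbf{Step 1: $K$ is a non-empty, weak$^*$-compact, convex subset of $C(X)^*$.} Non-emptiness is the hypothesis. $M(X,T)$ is weak$^*$-compact and convex. For each $g\in\C$ the set $\{\mu : \int g\,d\mu = 0\}$ is a weak$^*$-closed affine hyperplane. Hence $K$, as an intersection of a compact convex set with closed affine sets, is compact and convex. Also $M_{\C}^*(X,T;\phi)$ is exactly the set of maximizers over $K$ of the weak$^*$-continuous linear functional $\mu\mapsto\int\phi\,d\mu$. Since $X$ is compact metrizable, $C(X)$ is separable, and $K$ is metrizable in the weak$^*$ topology.

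\textbf{Step 2: residuality.} For $\phi\in C(X)$ put $\beta(\phi)=\max_{\mu\in K}\int\phi\,d\mu$. This is a convex, $1$-Lipschitz function on $C(X)$. The functional $\phi\mapsto\int\phi\,d\mu$ is a subgradient of $\beta$ at $\phi$ exactly when $\mu\in M_{\C}^*(X,T;\phi)$. Indeed, any subgradient is a positive normalized functional supported on the maximizers, hence lies in the closed convex hull of the optimal measures, and so in $K$. Therefore $|M_{\C}^*(X,T;\phi)|=1$ iff $\beta$ is Gâteaux differentiable at $\phi$. Mazur's theorem says a continuous convex function on a separable Banach space is Gâteaux differentiable on a dense $G_\delta$ set, which gives residuality of $U_\C$.

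Alternatively, one can argue directly. Fix a countable dense set $\{f_n\}\subset C(X)$ and let
\[
U_n=\Bigl\{\phi : \operatorname{diam}_{f_n}\bigl(M_{\C}^*(X,T;\phi)\bigr)<1/k\Bigr\},
\]
where $\operatorname{diam}_{f_n}$ is the spread of $\int f_n\,d\mu$ over the optimal set. Openness of these sets follows from upper semicontinuity of $\phi\mapsto M_{\C}^*(X,T;\phi)$. Density follows by perturbing $\phi$ to $\phi+\epsilon f_n$, which selects an exposed face. The countable intersection of the $U_n$ is $U_\C$.

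\textbf{Step 3: prevalence, the main obstacle.} The plan is to follow Morris \cite{morris2021prevalent}, whose argument is abstract. For a compact metrizable convex set $K$ in the dual of a separable Banach space $E$ that embeds continuously and linearly via $E\to A(K)$, the set of $\phi$ with a unique maximizer on $K$ is prevalent. The key facts to establish are the following.

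(a) The non-uniqueness set $E\setminus U_\C$ is Borel, in fact $F_\sigma$ by Step 2.

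(b) A transverse measure exists. Take a probability measure $P$ supported on a compact set $\{\sum_n t_n f_n/(n^2\|f_n\|) : t_n\in[0,1]\}$, a product of uniform measures. Show that for every $\psi$ and every $n$, the set of $t_n$ for which $\psi+\sum_m t_m f_m'$ has an optimal set whose $f_n$-spread is positive is Lebesgue-null. This holds because the one-parameter convex function $t\mapsto\beta(\psi+\dots+t f_n'+\dots)$ is differentiable at all but countably many $t$, and a non-differentiability point is exactly where the spread of $\int f_n$ over the optimal set is positive. Fubini then gives $P(\{\phi : \phi+\psi\notin U_\C\})=0$.

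I expect the delicate part to be precisely this Fubini/countability argument, which must be done with care about measurability. Since none of it uses the structure of $K$ beyond compactness and convexity, the constrained case follows verbatim from the unconstrained one once Step 1 is in place.
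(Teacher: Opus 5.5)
Your proposal is correct, but it proves both halves by a different route from the paper.

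\textbf{The paper's route.} For residuality, the paper shows that the argmax correspondence $\phi\mapsto M_\C^*(X,T;\phi)$ has closed graph, hence is upper semicontinuous. It then checks by hand that this correspondence is lower semicontinuous exactly on $U_\C$, and applies Fort's theorem. For prevalence, it shows $F_\C$ is $1$-Lipschitz. It proves via one-sided difference-quotient bounds that $F_\C$ is G\^{a}teaux differentiable exactly on $U_\C$. It then cites Christensen's theorem for Lipschitz maps on separable Fr\'{e}chet spaces. The paper never invokes convexity of $F_\C$ as such.

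\textbf{Your route.} You run everything through the convex support function $\beta=F_\C$ of $K$. The identity $\partial\beta(\phi)=M_\C^*(X,T;\phi)$ plus Mazur's theorem gives residuality. For prevalence you in effect reprove the convex case of Christensen's theorem. You use an explicit compactly supported product measure, Fubini, and the fact that a convex function of one variable has at most countably many kinks. Here the spread of $\int f_n\,d\mu$ over the optimal set is exactly the jump between the one-sided derivatives of $\beta$ in direction $f_n$.

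\textbf{What each approach buys.} Your route gives one mechanism for both conclusions and a self-contained prevalence proof. It also handles measurability explicitly: the spreads are upper semicontinuous and the bad set is $F_\sigma$, whereas the paper leaves this inside the citation. The paper's Fort-theorem argument works verbatim for any topological vector space densely and continuously embedded in $C(X)$, where Mazur's theorem is unavailable. For the statement at hand, with $E=C(X)$, this difference does not matter.

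\textbf{Two small repairs.} First, your justification of $\partial\beta(\phi)=M_\C^*(X,T;\phi)$ is garbled; ``supported on the maximizers'' means nothing here. The clean argument tests a subgradient $\ell$ against $\phi-h$ with $h\ge 0$, against $\phi+c$, $\phi+t(h\circ T-h)$, $\phi+tg$ with $g\in\C$, and against $0$. This shows $\ell$ is positive, normalized, invariant and vanishes on $\C$, so $\ell\in K$, and also that $\ell(\phi)=\beta(\phi)$. Second, in your direct Baire-category argument the sets must be indexed by $(n,k)$. Density then comes from choosing $\epsilon$ outside the countable set of kinks of $t\mapsto\beta(\phi+tf_n)$. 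An arbitrary small $\epsilon$ does not suffice, since $\phi+\epsilon f_n$ may itself be a kink point.
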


By general principles, the solution set $M_{\C}^*(X,T;\phi)$ is a closed face of $M_{\C}(X,T)$ (see Section \ref{Sect:background}). One may then ask, which closed faces of $M_{\C}(X,T)$ can be realized as the solution set $M_{\C}^*(X,T;\phi)$ for some $\phi \in C(X)$? In the unconstrained case ($\C = \varnothing$), it is known that every closed face of $M(X,T)$ can be realized as the solution set $M_{\varnothing}^*(X,T;\phi)$ for some $\phi \in C(X)$ \cite[Theorem 3]{jenkinson2006every}. However, there are examples such that $M_{\C}(X,T) = M(X,T) \cap H$ for a finite-dimensional vector space $H$ and yet $M_{\C}(X,T)$ is \textit{not} a polytope (see \cite[Example 4.4]{phelps1969infinite}). In such examples there may be closed faces that are not `exposed' and therefore cannot be realized as $M_\C^*(X,T;\phi)$ for any $\phi \in C(X)$. These examples suggest that we should not expect the realization results for the unconstrained case to generalize to the constrained setting without additional assumptions. Here we present our realization results in terms of a `finite-type' assumption. Given $(X,T)$ and $\C$ as above, we say that the the triple $(X,T,\C)$ has finite-type if there exists a finite set $\C' \subset C(X)$ such that $M_{\C}(X,T) = M_{\C'}(X,T)$. We remark that Examples \ref{Example1}-\ref{Example5} above all have finite type (see Corollary \ref{Cor:finitetype} below). See Section \ref{Sect:background} for more discussion of this condition. We prove the following theorem in Section \ref{Sect:Realization}.

\begin{restatable}[Realization]{theorem}{realization}
\label{Thm:Realization}
    Let $(X,T)$ be a topological dynamical system and $\C \subset C(X)$. If $(X,T,\C)$ has finite type, then for every closed face $K$ of $M_{\C}(X,T)$, there exists a continuous function $\phi \in C(X)$ such that $K = M_{\C}^*(X,T;\phi)$.
\end{restatable}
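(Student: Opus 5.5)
The plan is to reduce to the unconstrained realization theorem of Jenkinson \cite[Theorem 3]{jenkinson2006every}, using the finite-type hypothesis to make $M_\C(X,T)$ a finite-codimensional slice of $M(X,T)$.

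By finite type, fix finitely many $g_1,\dots,g_d \in C(X)$ with $M_\C(X,T)=M_{\C'}(X,T)$, where $\C'=\{g_1,\dots,g_d\}$. Let $K$ be a closed face of $M_\C(X,T)$ (we may assume $K\neq\varnothing$). Consider the affine map $\Psi : M(X,T)\to \R^d$, $\Psi(\mu)=(\int g_1\,d\mu,\dots,\int g_d\,d\mu)$. Then $M_\C(X,T)=\Psi^{-1}(0)$, and $\Psi(M(X,T))$ is a compact convex set $P\subset\R^d$ containing $0$.

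The key step is to find a closed face $F$ of $M(X,T)$ with $F\cap \Psi^{-1}(0)=K$. Once such an $F$ exists, Jenkinson's theorem gives $\psi\in C(X)$ with $M^*_\varnothing(X,T;\psi)=F$. Then $\psi$ attains its unconstrained maximum $\beta(\psi)$ exactly on $F$, and it attains the value $\beta(\psi)$ on $M_\C$ exactly on $F\cap M_\C=K$, because $K\ne\varnothing$. Hence $M_\C^*(X,T;\psi)=K$.

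The natural candidate is the smallest closed face of $M(X,T)$ containing $K$. A face of a face is a face, and faces of Choquet simplices are well behaved since $M(X,T)$ is a simplex. So we take $F$ to be the closed face generated by the ergodic decompositions of the measures in $K$. Concretely, let $E_K$ be the closure of the set of ergodic measures appearing in the decompositions of elements of $K$, and let $F$ be the set of invariant measures whose ergodic decomposition is carried by $E_K$. This is a closed face.

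The main obstacle is proving $F\cap\Psi^{-1}(0)\subset K$. This fails in general: $F$ may contain measures with $\Psi=0$ that are not in $K$, for instance when $K$ is a single point in the relative interior of a segment of $F\cap\Psi^{-1}(0)$. Since $K$ is a face of $\Psi^{-1}(0)$, such segments are excluded only for segments lying in $\Psi^{-1}(0)$. For this reason I would instead build $\phi$ iteratively in finitely many steps, using finite dimensionality. Step one: $\Psi(K)$ is a face-type subset lying over $0$, and $0$ lies in some face $Q$ of the polytope-like set $P$, of dimension at most $d$. Pick $\psi_0$ realizing $F$ and look at $K_1=M^*_\C(\psi_0)\supseteq K$, which is a closed face of $M_\C$. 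If $K_1\ne K$, add a correction: restrict to the face $F_1=\Psi^{-1}(\text{affine hull of }\Psi(F))\cap F$ and use a linear functional $\sum a_i g_i$ that supports the relevant set. The set $\{\int(\psi_0,g_1,\dots,g_d)\,d\mu\}$ lives in $\R^{d+1}$, so exposing faces there is a finite-dimensional convexity problem. The dimension drops at each step, so the procedure terminates after at most $d+1$ iterations. The final function has the form $\phi=\psi_k+\sum_j\sum_i a^{(j)}_i g_i$ with small weights. The $g_i$ terms do not change integrals on $M_\C$, but they shape which faces are exposed.

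I expect the real difficulty to be the possibility of non-exposed faces, which finite type is meant to rule out. The argument must use that on $\Psi^{-1}(0)$, adding $\sum a_ig_i$ is invisible, while Jenkinson's realization handles the infinite-dimensional part. I would try to show that $K$ is exposed by writing $K=F\cap\Psi^{-1}(0)$ for a suitably chosen $F$: take $F=\{\mu: \text{the ergodic decomposition of } \mu \text{ is carried by ergodic measures in } \overline{\bigcup_{\nu\in K}\operatorname{supp}(\text{decomp }\nu)}\}$. Then I would verify, using the simplex structure and the face property of $K$ in $\Psi^{-1}(0)$ together with Carathéodory in $\R^d$, that any $\mu\in F$ with $\Psi(\mu)=0$ already lies in $K$.
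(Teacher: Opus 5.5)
Your endgame matches the paper's. Given a closed face $F$ of $M(X,T)$ with $F\cap M_\C(X,T)=K$, realizing $F$ by some $\phi$ gives $M^*_\C(X,T;\phi)=K$. The paper does this by exposing $F$ with a continuous affine functional and representing it via Proposition~\ref{Prop:JenRepresentation}; your argument for this step is fine.

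But the existence of such an $F$ is the whole content of the theorem, and you do not prove it. The paper gets it from Lazar's result (Proposition~\ref{Prop:Lazar}): every closed face of a closed finite-codimensional slice of a compact convex set is the trace on the slice of a closed face of the ambient set. This actually makes your first candidate work. If $K=M_\C(X,T)\cap F_1$ with $F_1$ a closed face, then the smallest closed face $F_0\supseteq K$ lies in $F_1$, so $F_0\cap M_\C(X,T)=K$.

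Your reason for discarding $F_0$ is vacuous. A point $p$ with $\{p\}=K$ cannot lie in the relative interior of a segment in $F\cap\Psi^{-1}(0)\subseteq M_\C(X,T)$, precisely because $K$ is a face of $M_\C(X,T)$. The real difficulty is elsewhere. The (not necessarily closed) face of $M(X,T)$ generated by $K$ meets $M_\C(X,T)$ exactly in $K$ by an elementary argument: if $ty+(1-t)z\in K$ with $\Psi(y)=0$, then $\Psi(z)=0$. To apply a realization theorem, however, you need a \emph{closed} face, and the smallest closed face containing $K$ could a priori contain further points of $M_\C(X,T)$. Ruling this out is exactly where finite codimension is needed. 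It is the step you leave as ``I would verify \dots\ using Carath\'eodory''.

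The iterative fallback cannot close this gap. Every $\mu\in M_\C(X,T)$ has $\int g_i\,d\mu=0$, so $\phi=\psi_k+\sum_{j,i}a^{(j)}_ig_i$ satisfies $M^*_\C(X,T;\phi)=M^*_\C(X,T;\psi_k)$. The $g_i$-corrections therefore never change the solution set, contrary to your remark that they ``shape which faces are exposed''. Also, $\Psi(K)=\{0\}$ and $\Psi^{-1}(\mathrm{aff}\,\Psi(F))\cap F=F$, so those steps carry no information, and no quantity whose dimension drops is ever identified. Everything reduces again to choosing $\psi_k$, i.e.\ choosing $F$.

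Finally, for your concrete $F$ (measures whose ergodic decomposition is carried by $E_K$), you assert closedness without proof. This is not automatic. Ergodic measures are typically not closed in $M(X,T)$, so $E_K$ can contain non-ergodic measures, and $\mu\mapsto P_\mu$ is not weak$^*$ continuous. Jenkinson's theorem only realizes closed faces, so this matters. To complete the proof you must either invoke Lazar's theorem or prove directly that the smallest closed face of $M(X,T)$ containing $K$ meets $M_\C(X,T)$ only in $K$.
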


% \begin{theorem}[Realization]

% \end{theorem}

%By Proposition \ref{Prop:finitetype} below, if $M_{\C}(X,T)$ is a face of $M(X,T)$, then $(X,T,\C)$ has finite type and Theorem \ref{Thm:Realization} gives that every closed face of $M_\C(X,T)$ can be realized as the solution set $M_\C^*(X,T; \phi)$ for some $\phi \in C(X)$. As a consequence, Corollary \ref{Cor:finitetype} below yields that Theorem \ref{Thm:Realization} applies in the settings of all of the Examples \ref{Example1}-\ref{Example5} above. 
%, as well as all problems with finite combinations of constraints of the types given in Examples \ref{Example1}-\ref{Example5}.

Finally, we provide a duality result for the problem of ergodic optimization with linear constraints. In the setting of compact metrizable spaces, our duality result is a generalization of Kantorovich duality from the theory of optimal transport. See \cite[Chapter 5]{villani2009optimal} for the standard result; see \cite[Theorem 2.3]{garibaldi2017ergodic} for a duality result in the setting of ergodic optimization for symbolic dynamical systems; and see \cite{zaev2015monge,lopes2012duality} for some duality results that apply to the setting of ergodic optimal transport. Our proof of the following theorem appears in Section \ref{Sect:Duality}.

\begin{theorem}[Duality]
\label{Thm:Duality}
 Let $(X,T)$ be a topological dynamical system and $\C \subset C(X)$. Let $H$ be the smallest linear subspace of $C(X)$ containing $\C$. Then for any objective function $\phi\in C(X)$, we have
\begin{equation*}
\sup_{\substack{\mu \in M_{\C}(X,T)}} \int \phi \,  d\mu = \inf_{\substack{f+g - g \circ T + c \geq \phi \\ f\in H, \, g\in C(X), \, c\in \R}} c.
\end{equation*}
\end{theorem}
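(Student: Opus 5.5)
Weak duality comes first and is immediate. If $\mu \in M_{\C}(X,T)$ and $f+g-g\circ T+c \ge \phi$ with $f \in H$, $g \in C(X)$, $c\in\R$, then integrating against $\mu$ gives $\int f\,d\mu = 0$, since $f$ is a finite linear combination of elements of $\C$. Invariance gives $\int (g - g\circ T)\,d\mu = 0$. Hence $\int\phi\,d\mu \le c$, so the supremum is at most the infimum.

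This direction also covers the case $M_{\C}(X,T)=\varnothing$, where the left side is $-\infty$ by convention. In that case the plan is to show the infimum is also $-\infty$. By Theorem \ref{Thm:Existence} there is $h$ in the closed $T$-invariant span of $\C$ with $h>0$. I would approximate $h$ uniformly by an element of the form $f + \sum_k (f_k\circ T^{n_k})$, and rewrite each $f_k\circ T^{n} = f_k + (g - g\circ T)$ using a telescoping $g$. This produces some $f+g-g\circ T \ge \delta>0$, and scaling it drives $c\to-\infty$. So I would rather treat the empty case directly inside the Hahn--Banach argument below.

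For strong duality, let $\alpha$ denote the infimum and suppose for contradiction that $\sup < \alpha$; pick $s$ with $\sup<s<\alpha$. Consider the convex cone
\[
\mathcal{K}=\{f+g-g\circ T + c - \phi + p : f\in H,\ g\in C(X),\ c<s,\ p \text{ arbitrary}\}.
\]
It is cleaner to work with the convex set
\[
A=\{f+g-g\circ T : f\in H,\ g\in C(X)\}+\{c-\phi: c< s\},
\]
which is a translate of a linear subspace plus a half-line. The hypothesis $s<\alpha$ says that no element of $A$ is $\ge 0$, i.e., $A$ is disjoint from the cone $P$ of nonnegative functions, whose interior (the strictly positive functions) is nonempty in $C(X)$.

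By the Hahn--Banach separation theorem, there is a nonzero continuous functional $\Lambda$ (a signed Borel measure by Riesz) and a real $t$ with $\Lambda(a)\le t\le \Lambda(p)$ for all $a\in A$ and $p\in P$. Since $P$ is a cone, $t\le 0$ and $\Lambda\ge 0$, so $\Lambda$ is a positive measure, which I normalize to a probability $\mu$. Since $A$ contains the whole subspace $\{f+g-g\circ T\}$ shifted, $\Lambda$ must vanish on $H$ and on coboundaries. Thus $\mu\in M(X,T)$ and $\int f\,d\mu=0$ for $f\in H$, so $\mu\in M_{\C}(X,T)$. Applying the separation to $c-\phi$ with $c\uparrow s$ gives $s-\int\phi\,d\mu\le t/\Lambda(X)\le 0$, so $\int\phi\,d\mu\ge s>\sup$, a contradiction.

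The same argument, run with arbitrary $s\in\R$ when the infimum is $+\infty$, is not needed, because $c$ can always be taken large. When $M_{\C}=\varnothing$, this argument for every $s$ produces an element of $M_{\C}$, a contradiction, so the infimum is $-\infty$. This handles the empty case without invoking Theorem \ref{Thm:Existence}.

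The main obstacle I expect is the separation step: making sure the separated sets are genuinely disjoint and that one has nonempty interior. Here $P$ has interior, and we only need $A\cap \operatorname{int}P=\varnothing$, which follows since otherwise some $f+g-g\circ T+c>\phi$ with $c<s<\alpha$. The second delicate point is deducing that $\Lambda$ annihilates the linear part, which follows from the boundedness $\Lambda(a)\le t$ on a set invariant under adding a whole subspace.
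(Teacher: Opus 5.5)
Your proof is correct, but it takes a different route from the paper's.

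The paper never separates sets directly for this problem. It first proves a Kantorovich-type duality with only a ``marginal'' constraint $\mu|_W=\nu$ (Proposition \ref{Prop:Kanduality}), using the dominated-extension form of Hahn--Banach with the sublinear functional $U(g)=\inf\{\nu(w): w\in W,\ w\ge g\}$. It then adds the linear constraint $H$ by a minimax theorem (Proposition \ref{Prop:minmax}) applied to $(\mu,f)\mapsto\int(\phi-f)\,d\mu$ on $\Pi(\nu)\times H$, with a separate case analysis when $\Pi_H(\nu)=\varnothing$. Finally it specializes (Example \ref{Example:dualEOWC}) to $W=\{g\circ T-g+c\}$, an arbitrary $\nu\in M(X,T)$, and $-\phi$.

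You instead perform one geometric separation of $A$ from the positive cone, using that this cone has nonempty interior in $C(X)$. Positivity of $\Lambda$ comes from the cone structure. Invariance and the constraints come from $\Lambda$ being bounded above on a translate of the subspace $\{f+g-g\circ T\}$. The feasible point $c=\|\phi\|$ rules out $\alpha=+\infty$. The empty feasible set is absorbed into the same argument, since no real $s<\alpha$ can exist and so $\alpha=-\infty$. This is the same idea the paper uses to prove Theorem \ref{Thm:Existence}. It gives a shorter, self-contained proof with no minimax theorem or auxiliary problem, and it works directly with $H$ the plain linear span, as in the statement.

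What the paper's route buys is the general Theorem \ref{Thm:Kanduality} for arbitrary $(W,\nu)$, which also yields the optimal transport and relative ergodic optimization dualities. Your argument adapts to that setting too: use $H+\{w-\phi: w\in W,\ \nu(w)<s\}$ in place of $A$. Boundedness of $\Lambda$ on the translate of $\ker(\nu|_W)$ then forces $\Lambda|_W=\Lambda(\mathbf{1})\nu$.

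Two cosmetic fixes:
\begin{itemize}
\item Delete the set $\mathcal{K}$; with $p$ arbitrary it is all of $C(X)$.
\item State explicitly that $\Lambda(\mathbf{1})>0$, because a nonzero positive functional on $C(X)$ has norm $\Lambda(\mathbf{1})$.
\end{itemize}
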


%In Section \ref{}

\vspace{2mm}
\noindent
\textbf{Organization of the rest of the paper.} Section \ref{Sect:background} provides the necessary background and notation, as well as some preliminary results. Then in Sections \ref{Sect:Existence}-\ref{Sect:Duality} we prove our main results concerning existence, uniqueness, realization, and duality, respectively.

\section{Background and preliminary results} \label{Sect:background}

% Background on $C(X)$, $M(X,T)$, weak$^*$ topology, extreme points, ergodic decomposition, faces of $M(X,T)$, realization of continuous affine functionals by continuous functions. Define prevalence.

Let $X$ be non-empty, compact and metrizable, and let $\mathcal{B}(X)$ be the $\sigma$-algebra of all Borel subsets of $X$.
The space of real-valued continuous functions on $X$, denoted by $C(X)$, is a separable Banach space when equipped with the supremum norm, which we denote by $\|\cdot\|$. Let $C(X)^*$ denote the dual space of all continuous linear functionals on $C(X)$. Let $M(X)$ be the space of Borel probability measures on $X$, endowed with the weak$^*$ topology, and note that $M(X)$ is compact and metrizable. Now suppose $T : X \to X$ is continuous. A measure $\mu \in M(X)$ is said to be $T$-invariant if $\mu(T^{-1}(E)) = \mu(E)$ for all $E \in \mathcal{B}(X)$. Let $M(X,T)$ denote the set of all $T$-invariant Borel probability measures on $X$. A measure $\mu \in M(X,T)$ is said to be ergodic provided that $\mu(E)\in\{0,1\}$ for any set $E \in \mathcal{B}(X)$ such that $T^{-1}(E)=E$. Note that $M(X,T)$ is compact, convex and metrizable in the weak$^*$ topology. Furthermore, the extreme points of $M(X,T)$ are precisely the ergodic measures in $M(X,T)$. 

%Based on the properties of $M(X,T)$, we have the following properties of $M_\C(X,T)$.

Let $C(X)^{**}$ denote the continuous dual of $C(X)^*$, and 
define the natural (canonical) embedding $J:C(X)\to C(X)^{**}$, where $J(f) = J_f$ and $J_f$ is defined by  $J_f(\mu)=\mu(f)=\int f\, d\mu$. We denote the kernel of any linear functional $J$ by $\ker(J)$. Furthermore, let 
\begin{equation*}
    H_\C:=\bigcap_{f\in\C}\ker(J_f).
\end{equation*} The following lemma is an immediate consequence of the definitions.

\begin{lemma}
\label{Lemma:McapH}
Suppose $(X,T)$ is a topological dynamical system and $\C\subset C(X)$. Then $M_\C(X,T)=M(X,T)\cap H_\C$.
\end{lemma}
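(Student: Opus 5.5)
The plan is to check the two inclusions directly from the definitions, by identifying measures with elements of $C(X)^*$. Via the Riesz representation theorem, every $\mu \in M(X)$ is a positive linear functional $f \mapsto \int f\,d\mu$ on $C(X)$. So $M(X,T)$ is a subset of $C(X)^*$, and $H_\C$ is a subset of $C(X)^*$ as well, since each $J_f$ is a linear functional on $C(X)^*$ and $\ker(J_f) \subset C(X)^*$. With this identification, the intersection $M(X,T)\cap H_\C$ is meaningful, and for any $\mu \in M(X,T)$ and $f \in C(X)$ we have $J_f(\mu) = \int f\,d\mu$.

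For the inclusion $M_\C(X,T) \subset M(X,T)\cap H_\C$, take $\mu \in M_\C(X,T)$. By definition $\mu \in M(X,T)$ and $\int f\,d\mu = 0$ for every $f \in \C$. This means $J_f(\mu) = 0$, that is, $\mu \in \ker(J_f)$ for each $f \in \C$. Hence $\mu \in H_\C$.

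For the reverse inclusion, take $\mu \in M(X,T)\cap H_\C$. For each $f \in \C$ we have $\mu \in \ker(J_f)$, so $\int f\,d\mu = J_f(\mu) = 0$. Together with $T$-invariance this gives $\mu \in M_\C(X,T)$.

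The case $\C = \varnothing$ needs a convention. Under the usual convention that an empty intersection is the whole space $C(X)^*$, we get $H_\varnothing = C(X)^*$ and both sides equal $M(X,T)$. There is no real obstacle here; the only point needing care is making the identification of measures with functionals in $C(X)^*$ explicit, so that the intersection is well defined.
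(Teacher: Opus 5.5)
Your proposal is correct and takes the same approach as the paper, which treats the lemma as an immediate consequence of the definitions: both inclusions follow from the identity $J_f(\mu) = \int f\,d\mu$. Your explicit identification of $M(X)$ with a subset of $C(X)^*$, and your convention $H_\varnothing = C(X)^*$ for $\mathcal{C} = \varnothing$, are details the paper leaves implicit.
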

%\begin{proof}
%The result is obvious. First, any measure $\mu\in M(X,T)\cap H_\mathcal{C}$ equals $0$ on $\mathcal{C}$ (for any $f\in\mathcal{C}$, $\mu\in\ker(J_f)$, so $J_f(\mu)=\mu(f)=0$) and is invariant, so $\mu\in M_\mathcal{C}(X,T)$. We just need to show that $M_\mathcal{C}(X,T)\subseteq M(X,T)\cap H_\mathcal{C}$. For any $\mu\in M_\mathcal{C}(X,T)$, it must be in $M(X,T)$. It is also in $H_\mathcal{C}$ because for any $f\in\mathcal{C}$, we have $\mu(f)=0=J_f(\mu)$, that is, $\mu\in\ker(J_f)$ for any $f$, which implies $\mu\in\bigcap_{f\in\mathcal{C}}\ker(J_f)$.
%\end{proof}

%As a consequence of this lemma, we observe that $M_\C(X,T)$ is compact and convex. 
The following proposition records some basic properties of $M_\C(X,T)$. 

\begin{proposition}[Properties of $M_\C(X,T)$]
\label{Prop:M_C property}
    Suppose $(X,T)$ is a topological dynamical system and $\C\subset\C(X)$. Then %If $M_\C(X,T)$ is non-empty, then
\begin{enumerate}[(i)]
    \item $M_\C(X,T)$ is a compact subset of $M(X)$.
    \item $M_\C(X,T)$ is convex.
    \item Any ergodic measure $\mu$ in $M_\C(X,T)$ is an extreme point of $M_\C(X,T)$. 
    \item If $\mathcal{C}_1$ and $\mathcal{C}_2$ are subsets of $C(X)$, then
    \[
    M_{\mathcal{C}_1}(X,T)\cap M_{\mathcal{C}_2}(X,T)=M_{\C_1\cup\C_2}(X,T).
    \]
    % \item If $M_\mathcal{C}(X,T)$ satisfies the Facial Property and contains at least one ergodic measure, then any extreme point $\mu$ in $M_\mathcal{C}(X,T)$, then $\mu$ is ergodic.
    % \item If $\mu,\nu\in M_\C(X,T)$ are both ergodic and $\mu\neq\nu$ then they are mutually singular.
\end{enumerate} 
\end{proposition}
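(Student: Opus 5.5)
The plan is to deduce each of the four properties from Lemma \ref{Lemma:McapH}, which identifies $M_\C(X,T)$ with $M(X,T)\cap H_\C$, together with the standard facts about $M(X,T)$ recalled above.

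For (i), note that each $J_f$ with $f\in C(X)$ is weak$^*$ continuous on $M(X)$, since $\mu\mapsto\int f\,d\mu$ is continuous by the definition of the weak$^*$ topology. Hence $\ker(J_f)\cap M(X)$ is closed in $M(X)$. An arbitrary intersection of closed sets is closed, so $H_\C\cap M(X)$ is closed. Since $M(X,T)$ is compact, the set $M_\C(X,T)=M(X,T)\cap H_\C$ is a closed subset of a compact set, and therefore compact.

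For (ii), each $\ker(J_f)$ is a linear subspace, so $H_\C$ is convex. The set $M(X,T)$ is convex, and an intersection of convex sets is convex.

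For (iii), let $\mu\in M_\C(X,T)$ be ergodic, and suppose $\mu=t\nu_1+(1-t)\nu_2$ with $\nu_1,\nu_2\in M_\C(X,T)$ and $t\in(0,1)$. Then $\nu_1,\nu_2\in M(X,T)$. Since the extreme points of $M(X,T)$ are exactly the ergodic measures, we get $\nu_1=\nu_2=\mu$. So $\mu$ is extreme in the smaller set $M_\C(X,T)$. The only point to keep in mind is that extremality in a larger convex set passes to any convex subset containing the point.

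For (iv), unpack the definition. A measure $\mu$ lies in both $M_{\C_1}(X,T)$ and $M_{\C_2}(X,T)$ exactly when $\mu$ is invariant and $\int g\,d\mu=0$ for all $g\in\C_1$ and for all $g\in\C_2$. This is the same as requiring $\int g\,d\mu=0$ for all $g\in\C_1\cup\C_2$, which is the condition for $\mu\in M_{\C_1\cup\C_2}(X,T)$. Equivalently, $H_{\C_1}\cap H_{\C_2}=H_{\C_1\cup\C_2}$.

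None of these steps poses a real obstacle. The only point needing care is the weak$^*$ continuity in (i), and that follows directly from the definition of the topology.
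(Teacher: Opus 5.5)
Your proposal is correct and follows essentially the same route as the paper: both use Lemma~\ref{Lemma:McapH} to write $M_\C(X,T)$ as the intersection of the weak$^*$-closed convex kernels $\ker(J_f)$ with the compact convex set $M(X,T)$, which gives (i) and (ii). Both then obtain (iii) from the fact that extremality in $M(X,T)$ passes to any convex subset, and (iv) by unpacking the definitions.
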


%The proof is very regular, please refer to \cite[Thm. 6.10]{walters2000introduction}.
\begin{proof}
By Lemma \ref{Lemma:McapH}, $M_\C(X,T)=M(X,T)\cap H_\C$, where $H_\C=\bigcap_{f\in\C}\ker(J_f)$. Observe that $\ker(J_f)$ is closed and convex for each $f \in \C$ (since $J_f$ is continuous and linear). Hence $M_\C(X,T)$ can be written as an intersection of closed convex sets with the compact  convex set $M(X,T)$, and therefore we obtain (i) and (ii). To prove (iii), note that if $\mu$ is ergodic in $M(X,T)$, then it is an extreme point in $M(X,T)$ \cite[Theorem 6.10 (iii)]{walters2000introduction}, and hence it is an extreme point in any convex subset of $M(X,T)$. Finally, to establish (iv) observe that for any $\mu \in M(X,T)$, we have that $\mu$ is in $M_{\C_1}(X,T) \cap M_{\C_2}(X,T)$ if and only if $\int g \, d\mu = 0$ for all $g \in \C_1 \cup \C_2$, and hence $\mu \in M_{\C_1}(X,T) \cap M_{\C_2}(X,T)$ if and only if $\mu \in M_{\C_1 \cup \C_2}(X,T)$. %For any $\mu\in M_{\C_1}(X,T)\cap M_{\C_2}(X,T)$, $\mu(f)=0$ whenever $f$ in $\C_1$ or in $\C_2$, so $\mu\in M_{\C_1\cup\C_2}(X,T)$. When $\mu\in M_{\C_1\cup\C_2}(X,T)$, $\mu$ must be in both $M_{\C_1}(X,T)$ and $M_{\C_2}(X,T)$ by letting $\C_1$ or $\C_2$ be empty. 
\end{proof}

The next proposition lists some basic properties of any optimal solution set $M_\C^*(X,T;\phi)$.

\begin{proposition} \label{Prop:BasicM*}
    Suppose that $(X,T)$ is a topological dynamical system and $\C \subset C(X)$. Let $\phi$ be in $C(X)$. %Then the set of optimal solutions to (\ref{Eqn:EOWC}), denoted by $M^*_\mathcal{C}(X,T;\phi)$, has the following properties.
    \begin{enumerate}[(i)]
        \item If $M_\C(X,T)$ is non-empty, then $M^*_\mathcal{C}(X,T;\phi)$ is non-empty, compact, and convex. 
        \item If $f,g\in C(X)$ and if there exists $c\in\R$ such that $f-g-c$ belongs to the closure of the set $\{h\circ T-h: h\in C(X)\}$, then $M_\C^*(X,T;f)=M_\C^*(X,T;g)$.
    \end{enumerate}
\end{proposition}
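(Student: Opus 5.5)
For part (i), I would use the compactness of $M_\C(X,T)$ from Proposition \ref{Prop:M_C property}(i). The functional $\mu \mapsto \int \phi \, d\mu$ is continuous on $M(X)$ in the weak$^*$ topology, since $\phi \in C(X)$. When $M_\C(X,T)$ is non-empty, this continuous function attains its supremum $s$ on the compact set. Hence $M^*_\C(X,T;\phi)$ is non-empty; this is the same argument as the last assertion of Theorem \ref{Thm:Existence}, and it avoids any circularity.

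Next I would write
\begin{equation*}
M^*_\C(X,T;\phi) = M_\C(X,T) \cap \Bigl\{\mu : \int \phi\, d\mu = s\Bigr\}.
\end{equation*}
The second set is the preimage of a point under a continuous affine map, so it is closed and convex. Intersecting it with the compact convex set $M_\C(X,T)$ (Proposition \ref{Prop:M_C property}(i),(ii)) gives a compact convex set.

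For part (ii), the key observation is that for every $\mu \in M(X,T)$ and every $h \in C(X)$, invariance gives $\int (h\circ T - h)\, d\mu = 0$. The map $u \mapsto \int u\, d\mu$ is continuous in the sup norm, with $|\int u\,d\mu| \le \|u\|$. So the same identity holds for every $u$ in the norm closure $\overline{\{h\circ T-h : h \in C(X)\}}$.

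Applying this with $u = f-g-c$ gives
\begin{equation*}
\int f\, d\mu = \int g\, d\mu + c \quad \text{for all } \mu \in M_\C(X,T).
\end{equation*}
The two objective functionals therefore differ by the constant $c$ on the feasible set. Consequently their suprema differ by $c$, and a measure attains one supremum exactly when it attains the other. This gives $M^*_\C(X,T;f) = M^*_\C(X,T;g)$, including the degenerate case where $M_\C(X,T)$ is empty and both sides are empty.

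There is no real obstacle here. The only point needing care is the passage from coboundaries to their closure, which is handled by the sup-norm continuity bound above.
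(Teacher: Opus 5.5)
Your proposal is correct and follows essentially the same route as the paper: for (i) you use compactness of $M_\C(X,T)$ and continuity of $\mu\mapsto\int\phi\,d\mu$, and for (ii) you use that invariant measures vanish on coboundaries and hence, by sup-norm continuity, on their closure. Writing $M^*_\C(X,T;\phi)$ as the intersection of $M_\C(X,T)$ with a closed convex level set simply spells out what the paper calls ``easily checked''; handling $f$ versus $g$ in one step, rather than passing through $g+c$, is only a cosmetic difference.
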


\begin{proof}\hfill
    \begin{enumerate}[(i)]
        \item Since $\phi\in C(X)$, the map from $M_\C(X,T)$ to $\R$ defined by $\mu\mapsto\int\phi \, d\mu$ is continuous. Thus, since $M_{\C}(X,T)$ is compact (by \Cref{Prop:M_C property}), by the Extreme value Theorem, if $M_\C(X,T)$ is non-empty, then $M_\C^*(X,T;\phi)$ is non-empty. Furthermore, by \Cref{Prop:M_C property}, $M_{\C}(X,T)$ is compact and convex, and then since the map $\mu \mapsto \int \phi \, d\mu$ is continuous and affine, one may easily check that  $M_\C^*(X,T;\phi)$ is compact and convex.
        \item For any measure $\mu \in M(X,T)$ and $h \in C(X)$, the $T$-invariance of $\mu$ gives that $\mu(h \circ T - h) = 0$, and then by continuity of $\mu$ as a linear functional on $C(X)$, we obtain that $\mu(k) = 0$ for any $k$ in the closure of $\{h \circ T - h : h \in C(X)\}$. Thus, if $f - g-c$ is in the closure of $\{h \circ T - h: h \in C(X)\}$, then $\mu(f) = \mu(g+c)$, and we see that $M_{\C}^*(X,T;f) = M_{\C}^*(X,T;g+c)$. Finally, we note that for any $c$ in $\R$ we have $\mu(g+c) = \mu(g) + c$ for all $\mu \in M(X,T)$, and thus $M_{\C}^*(X,T;g) = M_{\C}^*(X,T;g+c)$, which finishes the proof. %This is because $\int f \, d\mu=\int g \, d\mu$ as long as $f-g$ belongs to the closure of the set $\{h\circ T-h: h\in C(X)\}$.
    \end{enumerate}
\end{proof}

\subsection{Faces, simplices, and affine functionals}
%Convexity plays an important role in the analysis of $M_\C(X,T)$ and the problem of ergodic optimization with linear constraints. 
%Convexity is crucial for us to characterize the solutions to (\ref{Eqn:EOWC}) since both the feasible set $M_\C(X,T)$ and the solution set $M_\C^*(X,T;\phi)$ are convex. Moreover, some results of the convex analysis are helpful in Section \ref{Sect:Realization}.
Let us now introduce some basic concepts from convex analysis. See the books \cite{alfsen2012compact,phelps2002lectures} for more details. Let $K$ be a convex set. For any non-empty subset $G$ of $K$, the {\it convex hull} of $G$, denoted by $\mathrm{co}(G)$, is the smallest convex subset of $K$ containing $G$. A non-empty convex subset $F$ of $K$ is a {\it face} of $K$ provided that for any $v_1,v_2$ in $K$ and $\alpha\in(0,1)$, if $\alpha v_1+(1-\alpha)v_2\in F$, then $v_1$ and $v_2$ are in $F$. Note that a point $e$ of $K$ is an extreme point of $K$ if and only if $\{e\}$ is a face of $K$. We let $\mathrm{ext}(K)$ denote the set of extreme points in $K$. By Choquet's Theorem, if $K$ is compact and convex, then every point of $K$ can be represented as a convex combination of extreme points (in the sense of a probability measure on the extreme points); see \cite[p.14]{phelps2002lectures}. Furthermore, a compact convex set $K$ is a {\it simplex} if and only if each point of $K$ is uniquely represented as a convex combination of the points in $\mathrm{ext}(K)$; see \cite[Chapter 10]{phelps2002lectures} for precise statements and details. Finite-dimensional examples of simplices include triangles and tetrahedrons. More importantly for our purposes, if $(X,T)$ is topological dynamical system, then $M(X,T)$ is known to be a compact, metrizable simplex (see \cite[Theorem 6.10]{walters2000introduction}). 
Finally, let us recall that a functional $l: K\to\R$ is said to be {\it affine} if for all $v_1,v_2\in K$ and $\alpha\in\R$, we have
\[
l(\alpha v_1+(1-\alpha)v_2)=\alpha l(v_1)+(1-\alpha)l(v_2).
\]

% \begin{figure}[ht!]
% \begin{tikzpicture}[scale=1.0]

% % ================= Triangle =================
% \begin{scope}
% \coordinate (v1) at (0,0);
% \coordinate (v2) at (3,0);
% \coordinate (v3) at (1.4,2.3);

% \fill[gray!15] (v1) -- (v2) -- (v3) -- cycle;
% \draw[very thick] (v1) -- (v2) -- (v3) -- cycle;

% \fill (v1) circle (2.5pt) node[below left] {$v_1$};
% \fill (v2) circle (2.5pt) node[below right] {$v_2$};
% \fill (v3) circle (2.5pt) node[above] {$v_3$};

% \node at (1.5,-0.7) {Triangle simplex};
% \end{scope}

% % ================= Tetrahedron =================
% \begin{scope}[xshift=6cm]
% \coordinate (e1) at (0,0);
% \coordinate (e2) at (2.7,0);
% \coordinate (e3) at (3.5,1.5);
% \coordinate (e4) at (1.5,3);

% % Faces
% \fill[gray!12] (e1) -- (e2) -- (e4) -- cycle;
% \fill[gray!22] (e1) -- (e3) -- (e4) -- cycle;
% \fill[gray!18] (e2) -- (e3) -- (e4) -- cycle;

% % Edges
% \draw[very thick] (e1) -- (e2) -- (e4) -- cycle;
% \draw[very thick] (e1) -- (e4);
% \draw[very thick] (e2) -- (e4);
% \draw[very thick] (e3) -- (e4);
% \draw[very thick] (e2) -- (e3);

% % Hidden/base edges
% \draw[very thick, dashed] (e1) -- (e3);

% % Vertices
% \fill (e1) circle (2.5pt) node[below left] {$e_1$};
% \fill (e2) circle (2.5pt) node[below right] {$e_2$};
% \fill (e3) circle (2.5pt) node[right] {$e_3$};
% \fill (e4) circle (2.5pt) node[above] {$e_4$};

% \node at (1.5,-0.7) {Tetrahedron simplex};
% \end{scope}

% \end{tikzpicture}
% \caption{Examples of simplices.}
% \label{Fig:simplex}
% \end{figure}

\subsection{Facial property}

%Here is a very significant property of $M(X,T)$. $M(X,T)$ is a simplex and $\mathrm{ext}(M(X,T))$ is precisely the set of ergodic probability measures of $M(X,T)$. For detailed introduction and proof, see \cite[Chapter 12]{phelps2002lectures} and \cite[Thm. 6.10]{walters2000introduction}.

The ergodicity of the extreme points of $M(X,T)$ is a fundamental fact in ergodic theory. %, and it is closely related to the characterization of closed faces of $M(X,T)$ and the uniqueness of the relative entropy \cite{petersen2003measures} and relative equilibrium states \cite{allahbakhshi2019relative, yoo2017decomposition}. 
However, in contrast to the situation in the unconstrained case of $M(X,T)$, the extreme points of $M_\C(X,T)$ may not be ergodic.
%and therefore are difficult to characterize. 
Indeed, the following example from \cite{kucherenko2015localized} %The following example
has the property that $M_\C(X,T)$ contains no ergodic measures whatsoever.
%there might be no ergodic measure in $M_\C(X,T)$ for some dynamical systems $(X,T)$ with some selections of constraint sets $\C$.

\begin{example}[\cite{kucherenko2015localized}, Example 3]
Let $a,b,c,d\in\mathbb R$ with $a<b<c<d$. Let $X=[a,b]\cup[c,d]$ and $T: X\to X$ continuous. In addition, assume that $T([a,b])\subset[a,b]$, $T([c,d])\subset[c,d]$, $T(a)=a$, and $T(d)=d$. For $w\in(b,c)$, let $\mathcal{C}=\{\mathrm{Id}_X-w\}$ where $\mathrm{Id}_X$ is the identity map on $X$. Then the set $M_\C(X,T)$ does not contain any ergodic measures.
\end{example}

%To ensure that $\mathrm{ext}(M_\C(X,T))$ is precisely the ergodic measure of $M_\C(X,T)$, we proposed the {\it Facial property} (or {\it property (FP)}), which is based on the concept of ergodic decomposition. Since $M(X,T)$ is a simplex and the ergodicity of $\mathrm{ext}(M(X,T))$, by the Choquet representation theorem (see \cite[p.60]{phelps2002lectures}), each element of $M(X,T)$ can be uniquely represented by the ergodic measures of $M(X,T)$.
Next we recall the ergodic decomposition of invariant measures.

\begin{definition}[Ergodic decomposition]
Let $M^e(X,T)$ be the set of ergodic measures of $M(X,T)$. Then for each $\mu\in M(X,T)$ there is a unique probability measure $P_{\mu}$ on the Borel subsets of $M(X,T)$ such that $P_{\mu}(M^e(X,T))=1$ and 
\[
\mu=\int_{M^e(X,T)}m \, dP_{\mu}(m).
\]
\end{definition}

Now we define a special property of the triple $(X,T,\C)$. 

\begin{definition}[Facial Property] \label{Defn:Facial}
We say that $(X,T,\C)$ satisfies the {\it facial property} (or {\it FP}) if for any $\mu\in M_\C(X,T)$, its ergodic decomposition measure $P_\mu$ satisfies $P_{\mu}(M_\C(X,T)) = 1$. 
%if its ergodic decomposition is
%\begin{equation}
%\mu=\int_{M^e(X,T)}\nu\, d\tau(\nu),  \end{equation}
%then $\nu\in M_\C(X,T)$ for $\tau$-almost every $\nu$. Where $\tau$ is a probability measure defined on the Borel subsets of $M(X,T)$ and supported on $M^e(X,T)$.
\end{definition}

\begin{remark}
Suppose $M_\C(X,T)$ is non-empty. Then $(X,T,\C)$ has the facial property if and only if $M_\C(X,T)$ is a face of $M(X,T)$. Consequently, if $(X,T,\C)$ has the facial property, then the extreme points of $M_\C(X,T)$ are exactly the ergodic measures in $M_\C(X,T)$, and $M_\C(X,T)$ contains at least one ergodic measure.  %That is, $M_\C(X,T)$ is a closed face of $M(X,T)$ when the Facial Property holds.
\end{remark}

\begin{example} The following examples all have the facial property. %Examples that satisfy the Facial Property.

\begin{enumerate}[(i)]
    \item Unconstrained ergodic optimization (Example \ref{Example1}); %  ($\mathcal{C}=\{0\}$). 
    \item Ergodic optimization on a subsystem (Example \ref{Example2}); % ($\mathcal{C}=\{g\in C(X): g|_Y\equiv 0\}$).
    \item Relative ergodic optimization over an ergodic measure $\nu\in M(Y,S)$ (Example \ref{Example4}); % satisfies the facial property.
    \item Ergodic optimal transport between two ergodic measures $\mu_1\in M(X_1,T_1)$ and $\mu_2\in M(X_2,T_2)$ (Example \ref{Example5}). % satisfies the facial property. 
\end{enumerate}   
\end{example}

% Suppose there are two constraint sets $\C_1$ and $\C_2$, define
% \[
% \C_1+\C_2=\{h_1+h_2: h_1\in \mathcal{C}_1, h_2\in \mathcal{C}_2\},
% \]
% then we have the following results:

% \begin{enumerate}[(i)]
%     \item If $\mathcal{C}_1$ and $\mathcal{C}_2$ are subsets of $C(X)$ satisfying the facial property, then $\mathcal{C}_1+\mathcal{C}_2$ is a subspace of $C(X)$ that satisfies the facial property.
%     \item Let $C_T(X)=\{f\in C(X): f\circ T^{-1}=f\}$. If $\mathcal{C}_1$ and $\mathcal{C}_2$ are subspaces of $C_T(X)$, then $\mathcal{C}_1+\mathcal{C}_2\subset C_T(X)$.
%     \item If $\mathcal{C}_1$ and $\mathcal{C}_2$ are subspaces of $C(X)$, then
%     \[
%     M_{\mathcal{C}_1}(X,T)\cap M_{\mathcal{C}_2}(X,T)=M_{\mathcal{C}_1+\mathcal{C}_2}(X,T)=M_{\C_1\cup\C_2}(X,T).
%     \]
% \end{enumerate}

Recall that $M_\C^*(X,T;\phi)$ is the set of all optimal measures of (\ref{Eqn:EOWC}) with the given objective function $\phi\in C(X)$. When the facial property holds for $(X,T,\C)$, the set of optimal measures $M_\C^*(X,T;\phi)$ possesses some additional properties, which we state in the following proposition.

\begin{proposition}
\label{Prop:Property M_C^*}
Suppose that $(X,T)$ is a topological dynamical system and $\C \subset C(X)$, and let $\phi\in C(X)$. If the feasible set $M_\C(X,T)$ is non-empty and $(X,T,\C)$ has the facial property, then the extreme points of $M^*_\mathcal{C}(X,T;\phi)$ are precisely the ergodic measures in $M^*_\mathcal{C}(X,T;\phi)$, and $M^*_\mathcal{C}(X,T;\phi)$ contains an ergodic measure. %the set of optimal solutions to (\ref{Eqn:EOWC}), denoted by $M^*_\mathcal{C}(X,T;\phi)$, has the following properties.
%\begin{enumerate}[(i)]
%    \item the extreme points of $M^*_\mathcal{C}(X,T;\phi)$ are precisely the ergodic measures in $M^*_\mathcal{C}(X,T;\phi)$;
%    \item $M^*_\mathcal{C}(X,T;\phi)$ contains an ergodic measure.
%\end{enumerate}     
\end{proposition}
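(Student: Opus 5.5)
The plan is to show that $M^*_\C(X,T;\phi)$ is a face of $M(X,T)$, and then combine this with the Krein--Milman theorem.

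First, $M^*_\C(X,T;\phi)$ is a face of $M_\C(X,T)$. Set $\beta=\sup_{\mu\in M_\C(X,T)}\int\phi\,d\mu$. Suppose $\alpha\mu_1+(1-\alpha)\mu_2\in M^*_\C(X,T;\phi)$ with $\mu_i\in M_\C(X,T)$ and $\alpha\in(0,1)$. Then $\alpha\int\phi\,d\mu_1+(1-\alpha)\int\phi\,d\mu_2=\beta$, while each integral is at most $\beta$. Hence both integrals equal $\beta$, so $\mu_1,\mu_2\in M^*_\C(X,T;\phi)$. Convexity and non-emptiness come from Proposition \ref{Prop:BasicM*}(i).

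Next, under the facial property, $M_\C(X,T)$ is a face of $M(X,T)$ by the Remark following Definition \ref{Defn:Facial}. A face of a face is a face, so $M^*_\C(X,T;\phi)$ is a face of $M(X,T)$. To check this directly: if $\alpha\mu_1+(1-\alpha)\mu_2\in M^*_\C$ with $\mu_i\in M(X,T)$, then the point lies in $M_\C$, so $\mu_i\in M_\C$ because $M_\C$ is a face of $M(X,T)$. The previous step then gives $\mu_i\in M^*_\C$.

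Now identify the extreme points. An extreme point of a face $F$ of a convex set $K$ is an extreme point of $K$. Indeed, if $e\in \mathrm{ext}(F)$ and $e=\alpha v_1+(1-\alpha)v_2$ with $v_i\in K$, the face property puts $v_i\in F$, and then extremality in $F$ forces $v_1=v_2=e$. Thus every extreme point of $M^*_\C(X,T;\phi)$ is an extreme point of $M(X,T)$, hence ergodic. Conversely, any ergodic measure in $M^*_\C(X,T;\phi)$ is extreme in $M(X,T)$, and therefore extreme in the convex subset $M^*_\C(X,T;\phi)$, as in Proposition \ref{Prop:M_C property}(iii).

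Finally, existence of an ergodic optimal measure. By Proposition \ref{Prop:BasicM*}(i), $M^*_\C(X,T;\phi)$ is non-empty, compact and convex in the locally convex space $C(X)^*$ with the weak$^*$ topology. The Krein--Milman theorem therefore gives an extreme point, and by the above it is ergodic.

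The main point needing care is the transfer "face of a face is a face". It relies on the Remark's equivalence between the facial property and $M_\C(X,T)$ being a face of $M(X,T)$. That equivalence is stated earlier in the paper, so I would invoke it directly. If one wanted a self-contained argument, the route would be to write $\mu=\alpha\mu_1+(1-\alpha)\mu_2$ and use uniqueness of the ergodic decomposition, $P_\mu=\alpha P_{\mu_1}+(1-\alpha)P_{\mu_2}$. Then $P_\mu(M_\C)=1$ forces $P_{\mu_i}(M_\C)=1$, and integrating the constraints over these decompositions gives $\mu_i\in M_\C$.
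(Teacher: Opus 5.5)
Your proof is correct and follows the paper's argument step for step. You show that $M^*_{\C}(X,T;\phi)$ is a face of $M_{\C}(X,T)$, and that $M_{\C}(X,T)$ is a face of $M(X,T)$ under the facial property, so extreme points of $M^*_{\C}(X,T;\phi)$ are ergodic; Krein--Milman then supplies one. You also check the elementary facts (face of a face, extreme points of a face) and sketch via uniqueness of the ergodic decomposition why the facial property makes $M_{\C}(X,T)$ a face, which the paper takes from its Remark.
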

\begin{proof}%\hfill
Since any ergodic measure in $M(X,T)$ is an extreme point of $M(X,T)$, we have that any ergodic measure in $M_{\C}^*(X,T;\phi)$ is an extreme point of $M_{\C}^*(X,T;\phi)$. Now suppose that $M_\C(X,T)$ is non-empty and $(X,T,\C)$ has the facial property. Then $M_{\C}(X,T)$ is a face of $M(X,T)$. Furthermore, as $M_{\C}^*(X,T; \phi)$ is defined as the set of optimal measures for an affine functional on the convex set $M_{\C}(X,T)$, we get that $M_{\C}^*(X,T; \phi)$ is a face of $M_{\C}(X,T)$. Hence $M_{\C}^*(X,T;\phi)$ is a face of $M(X,T)$, and the extreme points of $M_{\C}^*(X,T;\phi)$ are extreme points of $M(X,T)$. Thus the extreme points of $M_{\C}^*(X,T;\phi)$ are ergodic. Finally, as $M_{\C}^*(X,T;\phi)$ is non-empty, compact and convex, it must contain an extreme point by the Krein-Milman Theorem, and therefore $M_{\C}^*(X,T;\phi)$ contains an ergodic measure.
%\begin{enumerate}[(i)]
%    \item This is because the extreme point of $M_\C^*(X,T;\phi)$ is also extreme in $M_\C(X,T)$ and the facial property.
 %   \item For $\mu\in M^*_\C(X,T;\phi)$, let $\mu=\int_{ M^e_\mathcal{C}(X,T)}m \, d\tau(m)$ be the ergodic decomposition of $\mu$, where $ M^e_\mathcal{C}(X,T)$ is the set of ergodic measures of $ M_\C(X,T)$. Then
  %  \[
   % \int\phi \, d\mu=\int\phi \, d\left(\int_{ M^e_\mathcal{C}(X,T)}m \, d\tau(m)\right)=\int_{ M^e_\mathcal{C}(X,T)}\left(\int\phi \, dm\right) \, d\tau(m).
    %\]
    %Since $\int\phi \, dm\leq\int\phi \, d\mu$, so $m\in M^*_\C(X,T;\phi)$ for $\tau$-almost all $m$.
%\end{enumerate}    
\end{proof}

\subsection{Representation of \texorpdfstring{$M_\C(X,T)$}{M_C(X,T)} and finite type constraints}

Next, we provide a realization result describing which subsets of $M(X,T)$ can appear as $M_\C(X,T)$ for some choice of constraint set $\C$. The following representation result for continuous affine functionals on $M(X,T)$ is helpful.

\begin{proposition}[\cite{jenkinson2006every}, Proposition 1]
\label{Prop:JenRepresentation}
    Suppose $l:M(X,T)\to\R$ is weak$^*$ continuous and affine. Then there exists $g\in C(X)$ such that
    \[
    l(\mu)=\int g \, d\mu\quad\text{for all}\ \mu\in M(X,T).
    \]
\end{proposition}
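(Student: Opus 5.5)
The argument has two stages. First we extend $l$ affinely and continuously to all of $M(X)$. Then we represent the extension by a continuous function using the Riesz representation theorem. The difficulty is concentrated in the first stage: a continuous affine function on a closed face of $M(X)$ need not be the restriction of an integral against a continuous function without more work. That is why the simplex structure of $M(X,T)$ is needed, and why the result is cited from \cite{jenkinson2006every}.

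\emph{Stage one: extension.} The key input is that $M(X,T)$ is a compact metrizable simplex (a Choquet simplex) embedded affinely in the locally convex space $C(X)^*$ with the weak$^*$ topology. We plan to use the extension theorem for continuous affine functions on simplices. For a metrizable Choquet simplex $K$ sitting as a closed face or, more generally, as a compact convex subset of a compact convex set $L$, every continuous affine function on $K$ extends to a continuous affine function on $L$; this is the Edwards/Lazar-type extension result (see \cite{alfsen2012compact}). Taking $K=M(X,T)$ and $L=M(X)$, we obtain a weak$^*$ continuous affine $\tilde l:M(X)\to\R$ with $\tilde l|_{M(X,T)}=l$.

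\emph{Stage two: representation.} Define $g(x)=\tilde l(\delta_x)$. The map $x\mapsto\delta_x$ is continuous into $M(X)$ with the weak$^*$ topology, so $g\in C(X)$. By affinity, $\tilde l(\nu)=\int g\,d\nu$ for every finitely supported $\nu\in M(X)$. Such measures are weak$^*$ dense in $M(X)$. Both $\tilde l$ and $\nu\mapsto\int g\,d\nu$ are weak$^*$ continuous, so they agree on all of $M(X)$, and in particular on $M(X,T)$.

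\emph{Alternative without the extension theorem.} One could try to approximate directly. Continuous affine functions on $M(X)$ are exactly the maps $\nu\mapsto\int h\,d\nu$. Since $M(X,T)$ is a simplex, its continuous affine functions can be uniformly approximated by restrictions of such maps. This approach, however, only yields a uniform limit $g_n\to$ (representation of $l$) on $M(X,T)$, not convergence in $C(X)$. To recover an honest continuous $g$, one would correct by coboundaries $h\circ T-h$ via a telescoping series of approximants, and controlling that series in the sup norm is the delicate point. For this reason the primary plan relies on the simplex extension theorem, with stage two as the routine part.
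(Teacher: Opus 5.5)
\textbf{There is a genuine gap in Stage one.} The paper gives no proof of this statement; it quotes \cite{jenkinson2006every}. So I judge your proposal on its merits.

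Stage two is correct and routine. Because of it, Stage one is \emph{equivalent} to the proposition itself: a weak$^*$ continuous affine extension of $l$ to $M(X)$ exists if and only if $l$ is represented by some $g\in C(X)$. So all the content sits in Stage one, and the principle you invoke there is not a theorem.

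The Edwards--Lazar extension results concern a \emph{closed face} $F$ of a \emph{simplex} $S$, and extend from $F$ to $S$. Here $S=M(X)$ is indeed a simplex, but $M(X,T)$ is in general not a face of it. For instance, if $Ta=b$ and $Tb=a$ with $a\neq b$, then $\frac12(\delta_a+\delta_b)\in M(X,T)$ while $\delta_a\notin M(X,T)$.

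The ``compact convex subset'' version is false, even when both sets are metrizable simplices. In $\ell^2$ let $p_n=e_n/n^3$ and $q_n=e_n/n^2$, and set $K=\overline{\mathrm{co}}(\{0\}\cup\{p_n\})$ and $L=\overline{\mathrm{co}}(\{0\}\cup\{q_n\})$. Both are Bauer simplices, and $K\subset L$ because $p_n=\frac1n q_n$. The function $a(\sum_n t_np_n)=\sum_n t_n n^{-1/2}$ (with $t_n\ge 0$, $\sum_n t_n\le 1$) is continuous and affine on $K$. However, affinity along the segment $[0,q_n]$ forces any affine extension to satisfy $\tilde a(q_n)=n\,a(p_n)=\sqrt n$, while $q_n\to 0$. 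So no continuous affine extension exists. The simplex property of $M(X,T)$ is therefore not what makes the proposition true, and Stage one is unjustified.

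\textbf{The missing ingredient is dynamical.} It is exactly the ``delicate point'' your alternative paragraph leaves open. For every $f\in C(X)$ and $\varepsilon>0$ there is $h\in C(X)$ with
\[
\|f+h\circ T-h\|<\max_{\mu\in M(X,T)}\Bigl|\int f\,d\mu\Bigr|+\varepsilon .
\]
To prove this, put $A_mf=\frac1m\sum_{i=0}^{m-1}f\circ T^i$. Then $A_mf-f=h\circ T-h$ with $h=\frac1m\sum_{i=1}^{m-1}\sum_{j=0}^{i-1}f\circ T^j$. Moreover $\limsup_m\max_X A_mf\le\max_{\mu}\int f\,d\mu$: take maximizers $x_m$, and note that any weak$^*$ limit point of $\frac1m\sum_{i<m}\delta_{T^ix_m}$ is invariant. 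The minimum is handled symmetrically.

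With this lemma your telescoping route works.
\begin{enumerate}[(i)]
\item Hahn--Banach separation gives density for any compact convex set, with no simplex property needed: the functions $\mu\mapsto\int f\,d\mu$ are uniformly dense among continuous affine functions on $M(X,T)$.
\item Choose $f_n$ with $\sup_{\mu\in M(X,T)}|l(\mu)-\int f_n\,d\mu|<2^{-n}$.
\item Apply the lemma to $f_{n+1}-f_n$ to get $h_n$ with $\|f_{n+1}-f_n+h_n\circ T-h_n\|<2^{1-n}$.
\item Set $g=f_1+\sum_n(f_{n+1}-f_n+h_n\circ T-h_n)$. This series converges in $C(X)$, and $\int g\,d\mu=\lim_n\int f_n\,d\mu=l(\mu)$ for every $\mu\in M(X,T)$.
\end{enumerate}
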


\begin{proposition}
\label{Prop:representation}
    For any compact and convex subset $K$ of $M(X,T)$, there is a subset $\C\subset C(X)$ such that $M_\C(X,T)=K$.    
\end{proposition}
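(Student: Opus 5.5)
The plan is to take the largest possible constraint set. I will set
\[
\C_K := \Bigl\{ g \in C(X) : \int g \, d\nu = 0 \text{ for all } \nu \in K \Bigr\},
\]
and show $M_{\C_K}(X,T) = K$. By construction every $\nu\in K$ satisfies all constraints in $\C_K$, so the inclusion $K \subset M_{\C_K}(X,T)$ is immediate. All the work is in the reverse inclusion. Given $\mu \in M(X,T)\setminus K$, I need some $g\in\C_K$ with $\int g\,d\mu\neq 0$.

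For the reverse inclusion I would use Hahn--Banach separation in the locally convex space $C(X)^*$ with the weak$^*$ topology, whose continuous linear functionals are exactly the evaluations $J_f$, $f\in C(X)$. Since $K$ is compact and convex and $\mu\notin K$, there is $f\in C(X)$ and $t\in\R$ with
\[
\int f\,d\mu > t \ge \sup_{\nu\in K}\int f\,d\nu .
\]
Proposition~\ref{Prop:JenRepresentation} shows that nothing is lost by working with continuous affine functionals on $M(X,T)$ rather than with elements of $C(X)$: any such functional is integration against some $g\in C(X)$. This lets me manipulate separating affine maps on $M(X,T)$, for instance by restricting, composing with affine operations, or subtracting constants (using $\int 1\,d\nu=1$). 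The resulting functional is still of the form $\nu\mapsto\int g\,d\nu$.

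The main obstacle is upgrading this one-sided separation to an equality constraint. I need a functional $g$ that vanishes identically on $K$ but not at $\mu$. Subtracting a constant makes $\int f\,d\nu - t$ nonpositive on $K$, but not zero. What the argument really requires is that $\mu$ lie outside the weak$^*$-closed affine hull of $K$. In that case a second separation, applied to the closed affine subspace $\overline{\mathrm{aff}}(K)$ instead of $K$, yields a functional constant on $\overline{\mathrm{aff}}(K)$. After normalizing the constant via $g = f - c\cdot 1$, this gives $g\in\C_K$ with $\int g\,d\mu\ne 0$.

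So the key step I would try first is to show
\[
M(X,T)\cap\overline{\mathrm{aff}}(K)=K
\]
for the given $K$. I expect this to be exactly where the difficulty lies. For a general compact convex $K$, such as a disc sitting inside a two-dimensional slice of $M(X,T)$, this identity can fail, and then no equality constraints can carve out $K$. I would therefore check carefully whether this step can be forced by the structure of $M(X,T)$ as a simplex, or whether it must be assumed. In the latter case, the proof above goes through verbatim for every $K$ with this property, in particular for every closed face of $M(X,T)$.
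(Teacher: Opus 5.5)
Your proposal does not prove the proposition, but the obstruction you found is genuine. You should state it as a disproof rather than as something still to check: the identity $M(X,T)\cap\overline{\mathrm{aff}}(K)=K$ cannot be forced by the simplex structure, and the proposition is false as stated.

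The condition is necessary for every constraint set, not only for your method. Indeed $M_\C(X,T)=M(X,T)\cap H_\C$, where $H_\C=\bigcap_{f\in\C}\ker(J_f)$ is a weak$^*$-closed linear subspace. So $K=M_\C(X,T)$ forces $\overline{\mathrm{aff}}(K)\subset H_\C$, and hence $M(X,T)\cap\overline{\mathrm{aff}}(K)\subset K$.

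You do not even need a two-dimensional slice. Let $X=\{a,b\}$, $T=\mathrm{Id}$, and $p=\mu(\{a\})$. A constraint $g$ reads $g(b)+(g(a)-g(b))p=0$, whose solution set in $[0,1]$ is empty, a point, or all of $[0,1]$. So every $M_\C(X,T)$ is empty, a singleton, or $M(X,T)$, and $K=\{\mu\in M(X,T): 1/4\le\mu(\{a\})\le 3/4\}$ is not of this form.

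The paper's proof breaks at exactly this point. After separating $\mu_0$ from $K$ by $\psi$, it applies Edwards' theorem to $\alpha=\max(\psi,0)$ and to $\beta$, which equals $0$ on $K$ and $\max|\psi|>0$ off $K$, asserting that $\beta$ is concave. But if $t\mu_1+(1-t)\mu_2\in K$ with $t\in(0,1)$ and $\mu_2\notin K$, then $\beta(t\mu_1+(1-t)\mu_2)=0<(1-t)\max|\psi|\le t\beta(\mu_1)+(1-t)\beta(\mu_2)$. Thus $\beta$ is concave exactly when $K$ is a face of $M(X,T)$. In the example, take $\mu_0$ given by $p=0$. The affine $\ell$ produced by Edwards' theorem would vanish for $p\in[1/4,3/4]$, hence identically, contradicting $\ell(\mu_0)\ge\alpha(\mu_0)>0$.

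What your argument does prove, completely, is the corrected statement: a nonempty compact convex $K\subset M(X,T)$ is of the form $M_\C(X,T)$ if and only if $K=M(X,T)\cap\overline{\mathrm{aff}}(K)$, and in that case $\C_K$ works. Each step is sound:
\begin{enumerate}[(i)]
\item strict separation of $\mu$ from the weak$^*$-closed affine set $\overline{\mathrm{aff}}(K)$ by some $J_f$;
\item the fact that a linear functional bounded above on an affine subspace is constant there;
\item the normalization $g=f-c\mathbf{1}$, using $\int\mathbf{1}\,d\nu=1$ for $\nu\in K$.
\end{enumerate}

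Your remark about closed faces is also right. If $F$ is a closed face of $M(X,T)$, exposedness (as used in Proposition~\ref{Prop:finitetype}) together with Proposition~\ref{Prop:JenRepresentation} gives $\phi\in C(X)$ with $\int\phi\,d\mu=0$ on $F$ and $\int\phi\,d\mu<0$ off $F$. Then $\overline{\mathrm{aff}}(F)\subset\ker(J_\phi)$ and $M(X,T)\cap\ker(J_\phi)=F$, so one may simply take $\C=\{\phi\}$. This face case is the only one in which the paper's Edwards-theorem argument is valid.
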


\begin{proof}
Let $K$ be a compact convex subset of $M(X,T)$, and  
define 
\[
\C= \bigl\{f\in C(X): \mu(f)=0\ \text{for all}\ \mu\in K \bigr\}. %=\bigcap_{\mu\in K}\ker(\mu).
\]
Let us show that $K=M_\C(X,T)$. First, notice that $K\subset M_\C(X,T)$, since for any $\mu\in K$ and $f\in\C$, we have $\mu(f)=0$ by definition of $\C$, and therefore $\mu \in M_\C(X,T)$.

In order to establish the reverse inclusion, let $\mu_0\in M(X,T)\setminus K$. %In Section \ref{Sect:background}, $M(X,T)$ is a locally convex topological vector space with weak$^*$ topology. For any $\mu_0\in M_\mathcal{C}(X,T)\setminus K$, 
As $K$ and $\{\mu_0\}$ are compact, convex and disjoint, a corollary of the Hahn-Banach Theorem (\cite[Theorem 3.12]{aliprantis2006positive}) gives us a continuous affine functional 
%, two non-empty and disjoint subsets $K$ and $\{\mu_0\}$ can be separated by a continuous linear functional 
$\psi:M(X,T)\to\mathbb R$ such that $\psi(\mu_0)>0$ and $\psi|_K\leq 0$. Define $\alpha:M(X,T)\to\mathbb R$ by
\[
\alpha(\mu)=\max(\psi(\mu),0),
\]
and define $\beta:M(X,T)\to\mathbb R$ by
\[\quad \beta(\mu)=\begin{cases}
0,&\text{if}\ \mu\in K\\
\max\limits_{\nu\in M(X,T)}|\psi(\nu)|,& \text{if}\ \mu\in M(X,T)\setminus K
\end{cases}.
\]
Note that $\alpha$ is continuous and convex, and $\beta$ is lower semicontinuous and concave, and $\alpha\leq \beta$. Then by Edwards' Theorem (\cite[Theorem II.3.10]{alfsen1964geometry}), there exists a continuous affine functional $\ell$ on $M(X,T)$ such that $\alpha\leq\ell\leq\beta$. Hence we have
\[
\ell|_K\equiv 0, \quad \text{and}\quad \ell(\mu_0)>0.
\]
Finally, by \Cref{Prop:JenRepresentation}, there exists $f_0\in C(X)$ such that
\[
\ell(\mu)=\int f_0 \, d\mu\quad\text{for all}\ \mu\in M(X,T).
\]
Note that $f_0$ satisfies
\[
\mu(f_0)=\ell(\mu)=0,\ \text{for all}\ \mu\in K, \]
and hence $f_0 \in \C$. Additionally, we have
\[
\mu_0(f_0)=\ell(\mu_0)>0,
\]
and therefore $\mu_0$ is not in $M_\C(X,T)$. Hence $M_\C(X,T) \subset K$, and we conclude that $K = M_\C(X,T)$.
%That contradicts the fact that $f_0\in\mathcal{C}:=\cap_{\mu\in K}\ker(\mu)$ and $\mu_0\in M_\mathcal{C}(X,T)$. Therefore, $M_\mathcal{C}(X,T)\subseteq K$.
\end{proof}

\begin{definition}[Finite-type constraints]
Suppose $(X,T)$ is a topological dynamical system and $\C\subset C(X)$ is a constraint set. We say that the triple $(X,T,\C)$ has \textit{finite type} if there exists a finite constraint set $\C' \subset C(X)$ such that $M_{\C'}(X,T)=M_\C(X,T)$.
\end{definition}
When $(X,T,\C)$ has finite-type, the feasible set $M_\C(X,T)$ can be written as the intersection of the simplex $M(X,T)$ and finitely many hyperplanes of `codimension $1$' (by applying Lemma \ref{Lemma:McapH} with $\C'$). We remark that compact convex sets of this type have previously appeared in the literature \cite{phelps1969infinite}, where they are called {\it $\beta$-polytopes}.  %(see \cite{phelps1969infinite}). 
%We have the following properties of finite-type constraints.

\begin{proposition}
\label{Prop:finitetype}
    If $(X,T,\C)$ has the facial property, then there exists $\phi \in C(X)$ such that $M_{\C}(X,T) = M_{\{\phi\}}(X,T)$, and hence $(X,T,\C)$ has finite-type.
\end{proposition}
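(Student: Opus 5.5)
The plan is to reduce the statement to the known realization theorem for the unconstrained problem, namely \cite[Theorem 3]{jenkinson2006every} quoted in the introduction: every closed face of $M(X,T)$ equals $M_{\varnothing}^*(X,T;\psi)$ for some $\psi \in C(X)$. The facial property puts $M_\C(X,T)$ in exactly this setting, so the constraint set collapses to a single function.

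First we dispose of the degenerate case. If $M_\C(X,T)=\varnothing$, the facial property holds vacuously. We then take $\phi \equiv 1$. Every $\mu \in M(X,T)$ has $\int \phi\,d\mu = 1 \neq 0$, so $M_{\{\phi\}}(X,T)=\varnothing=M_\C(X,T)$.

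Now suppose $M_\C(X,T)\neq\varnothing$.
\begin{enumerate}[(1)]
\item By the remark following Definition \ref{Defn:Facial}, $M_\C(X,T)$ is a face of $M(X,T)$.
\item By Proposition \ref{Prop:M_C property}(i), $M_\C(X,T)$ is compact, hence weak$^*$ closed. So it is a closed face of $M(X,T)$.
\item Apply \cite[Theorem 3]{jenkinson2006every} to get $\psi\in C(X)$ with $M_\C(X,T)=M_\varnothing^*(X,T;\psi)$.
\item Let $\beta=\max_{\mu\in M(X,T)}\int\psi\,d\mu$, which is attained by compactness, and set $\phi=\psi-\beta$.
\item For $\mu\in M(X,T)$ we have $\int\phi\,d\mu=0$ if and only if $\int\psi\,d\mu=\beta$. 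This holds if and only if $\mu$ is a maximizing measure for $\psi$, that is, $\mu\in M_\C(X,T)$.
\end{enumerate}
Hence $M_{\{\phi\}}(X,T)=M_\C(X,T)$. Since $\{\phi\}$ is a finite constraint set, $(X,T,\C)$ has finite type.

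The only nontrivial ingredient is the realization of closed faces of $M(X,T)$ as maximizing sets. Everything else is routine checking that $M_\C(X,T)$ is a closed face.

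If one preferred not to cite \cite[Theorem 3]{jenkinson2006every}, the substitute would have to be built from the simplex structure of $M(X,T)$, and this is the step I expect to be the main obstacle. The natural attempt mimics the proof of Proposition \ref{Prop:representation}. Separate $M_\C(X,T)$ from each point of $M(X,T)\setminus M_\C(X,T)$ using Edwards' theorem. Then combine countably many of the resulting affine functionals, using separability of $C(X)$, into one continuous affine functional that vanishes exactly on the face. Convert it to a function via Proposition \ref{Prop:JenRepresentation}.

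The delicate point is that a closed face of a metrizable simplex is exposed. This is precisely where the simplex property, and hence the facial property, is needed: the paper has already noted that faces of general $\beta$-polytopes need not be exposed. For that reason the proof will rely on the cited theorem.
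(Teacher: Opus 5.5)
Your proof is correct and is essentially the paper's argument. The paper obtains a continuous affine functional exposing the closed face $M_\C(X,T)$ of the metrizable simplex $M(X,T)$ and converts it into $\phi$ via Proposition~\ref{Prop:JenRepresentation}; this is exactly the content packaged in Jenkinson's realization theorem, so your normalized $\phi=\psi-\beta$ plays the same role, and your separate handling of the vacuous case $M_\C(X,T)=\varnothing$ via $\phi\equiv 1$ covers a case the paper's proof passes over, since faces are required to be non-empty.
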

\begin{proof}
Since $M_\C(X,T)$ is closed and $(X,T,\C)$ has the facial property, $M_{\C}(X,T)$ is a closed face of $M(X,T)$. By \cite[Corollary 3.13]{fonf2001infinite}, there exists a weak$^*$ continuous affine functional $l:M(X,T)\to\mathbb R$ such that
\[
l|_{M_\C(X,T)}\equiv 0\quad\text{and}\quad l|_{M(X,T)\setminus M_\C(X,T)}<0.
\]
By \Cref{Prop:JenRepresentation}, there exists $\phi\in C(X)$ such that
\[
l(\mu)=\int \phi \, d\mu\quad\text{for all}\ \mu\in M(X,T).
\]
Letting $\mathcal{C}'=\{\phi\}$, we see that $M_\C(X,T)=M_\mathcal{C'}(X,T)$.
\end{proof}

\begin{corollary}
\label{Cor:finitetype}
    Each of the triples $(X,T,\C)$ in Examples \ref{Example1}-\ref{Example5} has finite type.
\end{corollary}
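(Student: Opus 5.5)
We need to show that each of Examples \ref{Example1}--\ref{Example5} has finite type. Whenever possible we route the argument through Proposition \ref{Prop:finitetype}, which says the facial property implies finite type. Where the facial property can fail, we produce a finite constraint set $\C'$ directly.

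\emph{Example \ref{Example1}.} Here $\C=\varnothing$ is already finite, so we take $\C'=\varnothing$.

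\emph{Example \ref{Example3}.} The constraint set $\C=\{f_1-v_1,\dots,f_d-v_d\}$ is finite, so again there is nothing to prove.

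\emph{Example \ref{Example2}.} We verify the facial property and apply Proposition \ref{Prop:finitetype}. First observe that $M_\C(X,T)$ is exactly the set of invariant measures $\mu$ with $\mu(Y)=1$.
\begin{itemize}
\item If $\mu(Y)<1$, Urysohn's lemma gives some $g\ge 0$ vanishing on $Y$ with $\int g\,d\mu>0$.
\item Conversely, if $\mu(Y)=1$, every $g$ vanishing on $Y$ integrates to zero.
\end{itemize}
Now let $\mu\in M_\C(X,T)$ have ergodic decomposition $P_\mu$. Then $1=\mu(Y)=\int m(Y)\,dP_\mu(m)$, which forces $m(Y)=1$ for $P_\mu$-a.e.\ $m$. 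Hence $P_\mu(M_\C(X,T))=1$, which is the facial property. (Alternatively, we could simply set $\C'=\{d(\cdot,Y)\}$, which gives finite type directly.)

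\emph{Examples \ref{Example4} and \ref{Example5}.} Here the measures $\nu$ and $\mu_i$ need not be ergodic, so the facial property may fail and we argue directly.

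For Example \ref{Example4}, $M_\C(X,T)=\{\mu\in M(X,T):\pi_*\mu=\nu\}$. Since $\pi_*$ is a continuous affine map $M(X,T)\to M(Y,S)$, this set is $\pi_*^{-1}(\{\nu\})$. Choose a compatible metric $d_Y$ on $M(Y)$ that is convex in each argument, for instance $d_Y(\rho,\nu)=\sum_k 2^{-k}\bigl|\int h_k\,d\rho-\int h_k\,d\nu\bigr|$ for a dense sequence $(h_k)$ in the unit ball of $C(Y)$. Then $\ell(\mu)=d_Y(\pi_*\mu,\nu)$ is continuous and convex, but it is not affine, so Proposition \ref{Prop:JenRepresentation} does not apply to it.

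This is the main obstacle. We would try to recover a continuous affine functional vanishing exactly on $K=M_\C(X,T)$, using the separation and Edwards' Theorem as in the proof of Proposition \ref{Prop:representation}. That construction, however, only produced affine functionals vanishing on $K$ and positive at a single point; to obtain one that is positive on all of $M(X,T)\setminus K$ we would need $K$ to be a face, which holds only under ergodicity. Our fallback for the general case is to exploit the fiber structure: decompose $\mu$ via its disintegration over $\nu$, and look for $\C'$ built from finitely many functions $f\circ\pi$.

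For Example \ref{Example5}, we treat each marginal constraint as an instance of Example \ref{Example4}, using the factor maps $\pi_i$. Proposition \ref{Prop:M_C property}(iv) then gives $M_\C=M_{\C_1}\cap M_{\C_2}$, and the union of two finite constraint sets is again finite.
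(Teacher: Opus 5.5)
You handle Examples~\ref{Example1}, \ref{Example2} and \ref{Example3} correctly, and $\C'=\{d(\cdot,Y)\}$ is a clean direct witness for Example~\ref{Example2}. The genuine gap is Examples~\ref{Example4} and~\ref{Example5}. You correctly note that the facial property can fail when $\nu$ is not ergodic, but then you only sketch a hope: disintegrate over $\nu$ and look for finitely many $f\circ\pi$. Your reduction of Example~\ref{Example5} to Example~\ref{Example4} inherits that hole.

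This step cannot be completed, because without ergodicity the claim is false. Take $X=Y=[0,1]$, $T=S=\pi=\mathrm{id}$ and $\nu$ Lebesgue measure, so that $M_\C(X,T)=\{\nu\}$. Suppose a finite $\C'=\{g_1,\dots,g_n\}$ had $M_{\C'}(X,T)=\{\nu\}$, and set $G=(g_1,\dots,g_n)\colon X\to\R^n$. Then $0=\int G\,d\nu$ lies in $\mathrm{co}(G(X))$, which is compact. By Carath\'eodory, $0=\sum_{j=1}^{n+1}t_jG(x_j)$, so the atomic measure $\sum_j t_j\delta_{x_j}\neq\nu$ lies in $M_{\C'}(X,T)$, a contradiction. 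The same argument on $[0,1]^2$ with identity maps and $\mu_1=\mu_2$ Lebesgue rules out Example~\ref{Example5}: any nonempty $M_{\C'}(X,T)$ with $\C'$ finite contains a finitely supported measure, whose marginals are atomic. This is not an artifact of trivial dynamics. On the full shift with $\pi=\mathrm{id}$ and $\nu=\int_0^1\beta_p\,dp$, where $\beta_p$ are the Bernoulli$(p)$ measures, continuity of $p\mapsto\beta_p$ and Carath\'eodory produce $\sum_j t_j\beta_{p_j}\neq\nu$ satisfying the same finitely many constraints.

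Your diagnosis also exposes the weak point of the paper's own proof. It simply asserts the facial property for Examples~\ref{Example1}, \ref{Example2}, \ref{Example4}, \ref{Example5} and applies Proposition~\ref{Prop:finitetype}. For Examples~\ref{Example4} and~\ref{Example5} that is only valid when $\nu$ (resp.\ $\mu_1,\mu_2$) is ergodic, which is exactly how those examples appear in the paper's earlier list of facial examples. The provable statement adds that hypothesis, and then your Example~\ref{Example2} argument adapts directly. For $\mu\in M_\C(X,T)$ we get $\nu=\pi_*\mu=\int\pi_*m\,dP_\mu(m)$, with each $\pi_*m$ ergodic. An ergodic (hence extreme) $\nu$ admits only the trivial representing measure, so $\pi_*m=\nu$ for $P_\mu$-a.e.\ $m$. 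This is the facial property, and Proposition~\ref{Prop:finitetype} yields a single $\phi$ with $M_\C(X,T)=M_{\{\phi\}}(X,T)$. For Example~\ref{Example5}, apply this to each projection $\pi_i$ to get $\phi_1,\phi_2$. Then $M_{\{\phi_1,\phi_2\}}(X,T)=M_{\C_1}(X,T)\cap M_{\C_2}(X,T)=M_{\C}(X,T)$, as you proposed.
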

\begin{proof}
    Let $(X,T,\C)$ be as in one of Examples \ref{Example1}, \ref{Example2}, \ref{Example4}, and \ref{Example5}. Observe that $(X,T,\C)$ has the facial property, and hence it has finite type by Proposition \ref{Prop:finitetype}. 
Also, it is immediate from the definitions that any constrained system $(X,T,\C)$ as in Example \ref{Example3} has finite type.
\end{proof}

\section{Existence} \label{Sect:Existence}
When $(X,T)$ is a topological dynamical system, the Krylov-Bogolioubov Theorem guarantees that $M(X,T)$ is non-empty. %, and then the Extreme Value Theorem gives that a solution to the standard ergodic optimization problem (\ref{Eqn:EO}) always exists. 
However, $M_\C(X,T)$ may be empty for some systems $(X,T)$ and constraint sets $\mathcal{C}$. For trivial examples, consider $\mathcal{C}=C(X)$ or $\mathcal{C} = \{\mathbf{1}\}$, where $\mathbf{1}$ denotes the constant function equal to one. The following example shows that $M_\C(X,T)$ may be empty due to interactions between the dynamics and the constraint set.
\begin{example}[Circle Rotations]
Let $(S^1,R_{\alpha})$ be an irrational rotation of the circle. Viewing the circle $S^1$ as the quotient space $[0,1]/\{0 \sim 1\}$, we let $f \in C(X)$ be such that $f \geq 0$ and $f(x) >0$ for some $x \in S^1$, e.g., we could choose
\begin{equation*}
f(x) = \max\bigl( 0, -(1-x)(1/2-x)\bigr).
\end{equation*}
Letting $\C = \{f\}$, we obtain that $M_\C(X,T)$ is empty, since the unique element $\mu$ of $M(X,T)$ is the Lebesgue measure on $S^1$, and we have $\mu(f) >0$. 
%Let $S^1=[0,1]/\sim$ be the unit circle, where $\sim$ indicates that $0$ and $1$ are identified, and $\mod 1$ makes $S^1$ an abelian group. The natural distance on $[0,1]$ induces a distance on $S^1$: 
%\[
%d(x,y)=\min(|x-y|,1-|x-y|)
%\]
%Lebesgue measure on $[0,1]$ gives a natural measure $\lambda$ on $S^1$.
%For $\alpha\in\mathbb R$ irrational, let $R_\alpha$ be the rotation of $S^1$ by angle $2\pi\alpha$, i.e.
%\[
%R_\alpha x=x+\alpha\mod 1
%\]
%$(S^1,\mathcal{B}([0,1]),\lambda,R_\alpha)$ is a measure-preserving system. If $f=-(1-x)(1-\alpha-x)\mathbf{1}_{[1-\alpha,1]}\in\mathcal{C}$, then for any element in $M_\mathcal{C}(X,T)$, whose support must avoid $[1-\alpha,1]$. However, such measures cannot be measure-preserving since for any interval $I=[a,b]$ with positive measure and $b-a<\alpha$, there exists an $n\in\mathbb N$ s.t.  $R_\alpha^n I=[R_\alpha^n a, R_\alpha^n b]\subset[1-\alpha,1]$ (this is because the orbit $\{R^n_\alpha\}_n$ is dense in $S^1$), thus $R^nI=0$. So $M_\mathcal{C}(X,T)$ is empty in this setting.
\end{example}

%Thus, the solution to (\ref{Eqn:EOWC}) does not always exist.
%Whenever $M_\C(X,T)$ is empty, no solutions of (\ref{Eqn:EOWC}) can exist. 
%To establish the existence result for (\ref{Eqn:EOWC}), we should ensure the non-emptieness of $M_\C(X,T)$ first.
The following lemma gives a sufficient condition for nonemptieness of $M_\C(X,T)$, and it is helpful for the proof of Theorem \ref{Thm:Existence} below. The proof of this lemma involves a modification of the standard Krylov-Bogolioubov argument. A subset $H \subset C(X)$ is said to be $T$-invariant if $f \circ T \in H$ for all $f \in H$. 

\begin{lemma}
\label{Lem:Existence}
Let $(X,T)$ be a topological dynamical system and $\C \subset C(X)$. If $\C$ is $T$-invariant and there exists a Borel probability measure $\mu \in M(X)$ such that $\mu(f)=0$ for all $f\in \mathcal{C}$, %and for any $f\in\mathcal{C}$, we have $f\circ T\in\mathcal{C}$, 
then $M_\C(X,T)$ is non-empty.
\end{lemma}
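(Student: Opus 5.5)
The plan is to run the Krylov--Bogolioubov averaging argument starting from the given measure $\mu$ instead of a Dirac mass. For each $n \geq 1$ define the Borel probability measures
\[
\mu_n = \frac{1}{n}\sum_{k=0}^{n-1} T^k_*\mu, \qquad\text{so that}\qquad \mu_n(f) = \frac{1}{n}\sum_{k=0}^{n-1}\mu(f\circ T^k) \quad \text{for } f \in C(X).
\]
Since $M(X)$ is compact and metrizable in the weak$^*$ topology, some subsequence $\mu_{n_j}$ converges weak$^*$ to a measure $\nu \in M(X)$. I will show that $\nu \in M_\C(X,T)$.

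\textbf{Invariance.} This is the standard Krylov--Bogolioubov estimate. For $f \in C(X)$ the sum telescopes:
\[
|\mu_n(f\circ T) - \mu_n(f)| = \frac{1}{n}\,\bigl|\mu(f\circ T^n) - \mu(f)\bigr| \leq \frac{2\|f\|}{n}.
\]
Both $f$ and $f\circ T$ are continuous, so passing to the limit along $n_j$ gives $\nu(f\circ T) = \nu(f)$ for every $f \in C(X)$. Hence $\nu \in M(X,T)$.

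\textbf{Constraints.} This is the only step that uses the hypotheses. Fix $g \in \C$. Because $\C$ is $T$-invariant, induction gives $g\circ T^k \in \C$ for all $k \geq 0$. By assumption $\mu(h) = 0$ for every $h \in \C$, so each term $\mu(g\circ T^k)$ vanishes and $\mu_n(g) = 0$ for every $n$. Since $g$ is continuous, weak$^*$ convergence gives $\nu(g) = \lim_j \mu_{n_j}(g) = 0$. Therefore $\nu \in M_\C(X,T)$, and in particular $M_\C(X,T)$ is non-empty.

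I do not expect a real obstacle here. The one point to get right is that the constraints survive the averaging, and this follows directly from the $T$-invariance of $\C$ together with the continuity of the constraint functions. Continuity is what makes $\mu \mapsto \mu(g)$ weak$^*$ continuous, so the limit $\nu$ satisfies the constraints exactly, not just approximately.
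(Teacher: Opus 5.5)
Your proof is correct and is essentially the paper's argument: Krylov--Bogolioubov averaging started from $\mu$, with the telescoping estimate giving invariance and the $T$-invariance of $\C$ making $\mu_n(g)=0$ for every $g \in \C$ and every $n$. The only difference is packaging: you average the measures and use weak$^*$ compactness of $M(X)$ directly, while the paper averages the corresponding functionals on $C(X)$, extracts a limit with Helly's theorem, and then recovers the invariant measure via the Riesz--Markov theorem.
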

\begin{proof}
As $X$ is a compact metrizable space, the Banach space $(C(X),\|\cdot\|)$ is separable. Let $\mu$ be a Borel probability measure on $C(X)$ such that $\mu(f) = 0$ for all $f \in \C$. For each $n \in \mathbb{N}$, define the linear functional $\psi_n : C(X) \to \mathbb{R}$ by setting
\[
\psi_n(f)
=\int_X\left[\frac{1}{n}\sum_{i=0}^{n-1}f\circ T^i\right] \, d\mu,\ \forall f\in C(X).
\]
It is clear that $|\psi_n(f)|\leq\|f\|$ for all $f\in C(X)$ and $n\in\mathbb N$, and hence each $\psi_n$ is bounded. Thus $\{\psi_n\}_{n=1}^{\infty}$ is a bounded sequence in $C(X)^*$. As $C(X)$ is a separable Banach space with the norm $\|\cdot\|$, by Helly's theorem there exists a subsequence $\{\psi_{n_k}\}_{k=1}^{\infty}$ that converges with respect to the weak$^*$ topology to a bounded linear functional $\psi\in C(X)^*$, i.e., for all $f \in C(X)$,
\[
\lim_{k\to\infty}\psi_{n_k}(f)=\psi(f).
\]
Note that for all $f \in C(X)$, 
\[
|\psi_{n_k}(f\circ T)-\psi_{n_k}(f)|\leq \int_X\left|\frac{1}{n_k}(f\circ T^{n_k}-f)\right| \, d\mu \leq \frac{2 \|f\|}{n_k}.
\]
Then for all $f \in C(X)$, letting $k$ tend to infinity gives 
\[
\psi(f\circ T)=\psi(f),
\]
which ensures the invariance of $\psi$. Note that $\psi$ is positive (i.e., if $f \geq 0$ then $\psi(f) \geq 0$), since that property is inherited from $\mu$. Also, for all $f\in \mathcal{C}$, we have $\psi(f)=0$, since $\psi_n(f)=0$ for all $n\in\mathbb N$ (where we have used the hypothesis that $\C$ is $T$-invariant). 

Since $X$ is a compact metrizable space and $\psi$  is a bounded positive linear functional on $C(X)^*$, by the Riesz-Markov Theorem (see \cite[p.458]{royden2018real}), there is a Borel measure $\hat{\mu}$ such that for all $f \in C(X)$, we have
\[
\psi(f)=\int_X f \, d\hat{\mu}. %, \quad  \forall f\in C(X).
\]
Also, $\hat{\mu}$ is $T$-invariant because
\[
\int_X f\circ T \, d\hat{\mu}=\psi(f\circ T) =\psi(f)=\int_X f \, d\hat{\mu}.
\]
Additionally, we have $\psi(\mathbf{1})=1$ since $\psi_n(\mathbf{1})=1$ for all $n\in\mathbb N$. Hence $\hat{\mu} \in M(X,T)$. Finally, for any $f\in \mathcal{C}$, we have $\hat{\mu}(f)=\psi(f)=0$, and therefore $\hat{\mu}\in M_\C(X,T)$.
\end{proof}

We are now prepared to prove our main existence result, which we recall for the reader's convenience.
\existence*

\begin{proof}
First, let us prove that for any $\mu \in M(X,T)$, we have $\mu|_\mathcal{C} \equiv 0$ if and only if $\mu|_H \equiv 0$. Since $\mathcal{C}\subseteq H$, it is immediate that $\mu|_H \equiv 0$ implies $\mu|_\mathcal{C} \equiv 0$. Now for the reverse direction, suppose $\mu|_\C \equiv 0$. Then $\ker(\mu)$ is a closed $T$-invariant subspace containing $\C$, and hence $H \subset \ker(\mu)$ (by the minimality of $H$). Therefore $\mu|_H \equiv 0$. 
%observe that for any positive bounded linear functional $\mu$ in $ C(X)^*$ that is $T$-invariant and equal to $0$ on $\mathcal{C}$, its kernel, $\mathrm{ker}(\mu):=\{f\in C(X): \mu(f)=0\}$, is closed and $\mathcal{C}\subseteq\mathrm{ker}(\mu)$. Since for any $f\in\mathrm{ker}(\mu)$, we have $f\circ T\in\mathrm{ker}(\mu)$. Therefore, $H\subseteq\mathrm{ker}(\mu)$ as well because $H$ is the smallest $T$-invariant linear subspace of $C(X)$ containing $\C$, which implies that $\mu|_H=0$.

Now let
\[
P=\{f\in C(X): f(x)\geq 0,\ \forall x\in X\},
\]
which is the non-negative cone in $C(X)$, and let $\mathrm{int}(P)$ denote its interior. Note that $g \in \mathrm{int}(P)$ if and only if %there exists $\epsilon >0$ such that $g(x) \geq \epsilon$ for all $x \in X$, and hence $g \in \mathrm{int}(P)$ if and only if 
$g(x) >0$ for all $x \in X$. To prove the equivalence of (i) and (ii) in the theorem, let us show that
\begin{equation} \label{Eqn:Juniper}
M_\C(X,T)\neq\varnothing\iff H\cap\mathrm{int}(P)=\varnothing.
\end{equation}
First, suppose there exists some measure $\mu$ in $M_\C(X,T)$. Then $\mu|_\C \equiv 0$, and by the previous paragraph $\mu|_H \equiv 0$. Suppose for contradiction that the intersection of $H$ with $\mathrm{int}(P)$ is non-empty. Then there exists $f \in H \cap P$ and  $t < 0$ such that $f+t$ is still in $P$. Next observe that $\mu(f+t) \geq 0$ (since $\mu$ is a probability measure), and on the other hand, we have $\mu(f+t) = \mu(f) + t = t < 0$ (since $f \in H$ and $\mu|_H \equiv 0$), a contradiction. Hence $H \cap \mathrm{int}(P) = \varnothing$.

To prove the reverse direction in (\ref{Eqn:Juniper}), we suppose that the intersection of $H$ with $\mathrm{int}(P)$ is empty. Recall that $\mathbf{1} \in C(X)$ denotes the constant function equal to one, and note that since $\mathbf{1} \in \mathrm{int}(P)$, we have that $\mathbf{1} \notin H$. Consider the subspace $Z = \mathrm{span}(H,\mathbf{1})$. Then define $\ell : Z \to \R$ by setting $\ell(\lambda+f)=\lambda$ for all $f\in H$ and $\lambda\in\mathbb R$ (and note that $\ell$ is well-defined on $Z$, since $H \cap \{\lambda \cdot \mathbf{1} : \lambda \in \R\} = \{0\}$). 
%Moreover, $\ell$ is a positive linear functional on $Z$, since for any $y=\lambda+f\geq 0$ in $\mathrm{span}(H,\mathbf{1})$, we have $\lambda \geq 0$ (if not, as $H\cap\mathrm{int}(P)=\varnothing$, there must be some $x\in X$ such that $f(x)+\lambda<0$), which implies that $\ell(y)=\ell(\lambda+f)\geq 0$. 
First we claim that $\ell$ is a positive linear functional on $Z$. The linearity is clear from the definition. For positivity, suppose $f + \lambda \geq 0$ for some $f \in H$ and $\lambda \in \R$. If $\lambda < 0$, then $f \geq -\lambda >0$, which gives that $f \in H \cap \mathrm{int}(P)$, a contradiction. Hence $\lambda \geq 0$, and then $\ell(f+\lambda) = \lambda \geq 0$, which shows that $\ell$ is positive on $Z$.
Next we claim that $\ell$ is bounded by a constant times the positively homogeneous and subadditive functional $\rho(g) = \|g\|$. To that end, define the distance function $\mathrm{dist}(\cdot,H) : C(X)\to\R$ by
\[
\mathrm{dist}(g,H):=\inf_{f\in H}\|g-f\|.
\]
Since $H$ is closed and $\mathbf{1} \notin H$, we have $d_0 := \mathrm{dist}(\mathbf{1},H) >0$. Next observe that for any $\lambda \neq 0$ and $f \in H$, we have
  \begin{equation*}
     \|f+\lambda\| = \|\lambda - (-f)\| = |\lambda|\cdot \| \mathbf{1} - (-f)/\lambda \| \geq |\lambda| \cdot d_0.
    \end{equation*}
    Then we obtain
    \begin{equation*}
        \rho(f+\lambda)=\|f+\lambda\| \geq |\lambda| \cdot d_0,
    \end{equation*}
    and thus $|\ell(f+ \lambda)| \leq \frac{1}{d_0} \|f+\lambda\|$ for all $f + \lambda \in Z$.
    
    By the Hahn-Banach theorem for positive linear functionals (see \cite[Theorem 2.3.7]{buhler2018functional}), there is a positive linear functional $\tilde{\ell}$ on $ C(X)$ such that $\tilde\ell|_H=0$, $\tilde\ell(\mathbf{1})=1$, and $|\tilde{\ell}(g)| \leq\frac{1}{d_0}\|g\|$ for any $g\in C(X)$. Then by the Riesz Representation Theorem for the dual of $C(X)$ (see \cite[Corollary 14.15]{aliprantis2006infinite}), there exists a signed Borel measure $\tilde{\mu}$ such that for any $g\in C(X)$, we have
    \[
    \tilde{\ell}(g)=\int g \, d\tilde{\mu}.
    \]
    By the properties of $\tilde{\ell}$ established above, we get that $\tilde{\mu}|_H \equiv 0$ and $\tilde{\mu}(\mathbf{1})=1$. Moreover, since $\tilde{\ell}$ is positive, we have that $\tilde{\mu}$ is a probability measure. Then by Lemma \ref{Lem:Existence}, there exists a measure $\mu^*\in M_H(X,T)$, which implies $\mu^*\in M_\C(X,T)$. Therefore, $M_\C(X,T)$ is non-empty, which concludes the proof of (\ref{Eqn:Juniper}).

    Finally, suppose that $M_\C(X,T)$ is non-empty, and let us show that the solution set $M_\C^*(X,T;\phi)$ is non-empty. As $\phi\in C(X)$ is continuous, the map $F: M_\C(X,T) \to \R$ defined by $F(\mu) = \int \phi \, d\mu$ is continuous (see \cite{jenkinson2006ergodic}). %That is, if $\mu_n\to\mu$ in $M_\C(X,T)$ with weak$^*$ topology, then
    %\[
    %\int\phi\ d\mu\geq\limsup_{n\to\infty}\int\phi\ d\mu_n.
    %\]
    Then, since $M_{\C}(X,T)$ is compact (by \Cref{Prop:M_C property}), by the Extreme Value Theorem, $F$ is bounded above, and moreover the supremum of $F$ is attained. 
\end{proof}

\begin{remark}
In Examples \ref{Example1}, \ref{Example2}, \ref{Example4} and \ref{Example5}, one may easily check that the constraint set $\C$ itself is closed and $T$-invariant, and therefore \Cref{Thm:Existence} yields that $M_\C(X,T)$ is non-empty.
\end{remark}

\begin{example}
\label{Exp:Invfinitemany}
Suppose $X$ is a non-empty compact metrizable set and $T_1 : X \to X$ and $T_2 : X \to X$ are continuous commuting transformations ($T_1\circ T_2=T_2\circ T_1$). For $i=1,2$, let
\[
\mathcal{C}_i=\{f-f\circ T_i: f\in C(X)\}.
\]
Then, by \Cref{Prop:M_C property} (iv), we have $M_{\C_2}(X,T_1)=M_{\C_1\cup\C_2}(X,\mathrm{Id})=M_{\C_1}(X,\mathrm{Id})\cap M_{\C_2}(X,\mathrm{Id})$, which is the set of measures that are both $T_1$- and $T_2$-invariant. Since $T_1\circ T_2=T_2\circ T_1$, we observe that
\[
(f-f\circ T_2)\circ T_1=f\circ T_1-(f\circ T_1)\circ T_2\in \mathcal{C}_2,\quad\text{for any}\ f\in C(X),
\]
and therefore $\C_2$ is $T_1$-invariant. Since there exists a measure $\mu$ equal to $0$ on $\C_2$ (given by any $\mu\in M(X,T_2)$), \Cref{Thm:Existence} yields that $M_{\C_2}(X,T_1)$ is non-empty. We note that the non-emptiness of $M_{\C_2}(X,T_1)$ can also be obtained from the amenability of the semigroup action generated by the transformations.

%Furthermore, let $\C^{(n-1)}=\cup_{k=1}^{n-1}\C_k$, we have that $M_{\C^{(n-1)}}(X,T_n)$ is non-empty by induction.
\end{example}

% \begin{remark}
%     Let $\Gamma=\mathbb Z^n$. Let $\phi$ be a topological action of $\Gamma$ on $X$, that is, let $\gamma=(a_0,a_1,\dots,a_{n-1})$ be any element in $\Gamma$,  $\phi:\Gamma\times X\to X$, $(\gamma,x)\mapsto\phi_\gamma(x)=T_1^{a_0}\circ T_2^{a_1}\circ\dots\circ T_n^{a_{n-1}}(x)$ is a continuous map such that $\phi_{(0,0,\dots,0)}=\mathrm{Id}$ and $\phi_{\gamma_1}\circ\phi_{\gamma_2}=\phi_{\gamma_1\gamma_2}$ for all $\gamma_1,\gamma_2\in\Gamma$. Define
%     \[
%     M_\phi=\{\mu\in M(X,T):\mu(\phi_\gamma^{-1}A)=\mu(A)\ \text{for all}\ \gamma\in\Gamma\ \text{and}\ A\in\mathcal{B}(X)\}.
%     \]
%     Notice that $M_\phi$ is the same as $M_{\C^{(n-1)}}(X,T_n)$ in Example \ref{Exp:Invfinitemany}. The nonemptiness of $M_\phi$ can also be obtained from the amenability of the semigroup action generated by the transformations.
% \end{remark}

\section{Uniqueness} \label{Sect:Uniqueness}
By Theorem \ref{Thm:Existence}, if $(X,T)$ is a topological dynamical system and $\C\subset C(X)$ is a constraint set such that $M_\C(X,T)$ is non-empty, then the optimal solution set $M^*_\mathcal{C}(X,T;\phi)$ is non-empty for any $\phi\in C(X)$. In general, the solution set $M^*_\C(X,T;\phi)$ may contain more than one element. An extreme example is when $\mathcal{C}=\{0\}$ and $\phi$ is a constant, in which case every measure $\mu\in M(X,T)$ maximizes $\int\phi \, d\mu$.
In this section, we are interested in the following uniqueness problem: under what conditions is there a unique solution to the constrained ergodic optimization problem (\ref{Eqn:EOWC})? In ergodic optimization, there are two common senses in which a set of objective functions is considered large: residual sets are large in a topological sense, and prevalent sets are large in the sense of measure theory. Here we give sufficient conditions under which the set of objective functions with a unique solution is large in both senses, generalizing results of \cite{jenkinson2006ergodic} for the standard (unconstrained) ergodic optimization problem. 

\subsection{Generic Uniqueness} 
We consider a function space $E\subset C(X)$, and we would like to determine whether the constrained ergodic optimization problem (\ref{Eqn:EOWC}) with a `typical' function $\phi \in E$ has a unique solution. Let
%and a `{\it residual}' (A residual set is the set contains a countable intersection of open dense subsets.) subset $E'\subset E$ such that for any function $\phi\in E'$, (\ref{Eqn:EOWC}) has a unique solution. Or in other words, the set
\[
U_\mathcal{C}(E)=\bigl\{\phi\in E: |M_\C^*(X,T;\phi)|=1\bigr\}.
\]
%is residual (so it is topologically large) in $E$. Such uniqueness property that is associated with a residual subset is called a `{\it generic}' property. 
Our first uniqueness result deals with the case when $M_\C(X,T)$ contains only finitely many extreme points, generalizing \cite[Proposition 3.1]{jenkinson2006ergodic}. We note that our proof follows essentially the same idea as the proof of \cite[Proposition 3.1]{jenkinson2006ergodic}.
\begin{proposition}
Let $(X,T)$ be a topological dynamical system and $\C \subset C(X)$. Let $E$ be a topological vector space that is densely and continuously embedded in $C(X)$. If  $M_\C(X,T)$ is non-empty and contains only finitely many extreme points, then $U_\mathcal{C}(E)$ is open and dense in $E$.  
\end{proposition}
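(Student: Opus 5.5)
Let $\mu_1,\dots,\mu_N$ be the distinct extreme points of $M_\C(X,T)$. By Proposition \ref{Prop:M_C property}, $M_\C(X,T)$ is compact and convex, so by Krein--Milman it equals $\mathrm{co}\{\mu_1,\dots,\mu_N\}$; since this convex hull of finitely many points is already closed, no closure is needed. The plan is to reduce uniqueness to a strict inequality among finitely many affine functionals. For $\phi \in C(X)$, the map $\mu \mapsto \int\phi\,d\mu$ is affine, so its maximum over $M_\C(X,T)$ equals $\max_i \mu_i(\phi)$. The optimal set $M_\C^*(X,T;\phi)$ is therefore the convex hull of those $\mu_i$ attaining this maximum. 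Indeed, write $\mu = \sum_i t_i\mu_i$. Then $\mu(\phi) = \sum_i t_i \mu_i(\phi)$ equals the maximum if and only if every $\mu_i$ with $t_i>0$ is maximal. Since the $\mu_i$ are distinct, we obtain
\[
U_\C(E) = \bigl\{\phi\in E : \text{there is } i \text{ with } \mu_i(\phi) > \mu_j(\phi) \text{ for all } j\neq i\bigr\}.
\]

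\emph{Openness.} Let $\iota:E\to C(X)$ be the continuous embedding. Each map $\phi\mapsto \mu_i(\phi)-\mu_j(\phi)$ is a continuous linear functional on $C(X)$, hence continuous on $E$ after composing with $\iota$. For a fixed $i$, the condition in the display is a finite intersection of the open sets $\{\phi: \mu_i(\phi)-\mu_j(\phi)>0\}$, $j\neq i$. Taking the union over $i$ gives an open set, so $U_\C(E)$ is open in $E$.

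\emph{Density.} This is the main step. Fix $\phi\in E$ and a neighborhood $V$ of $\phi$ in $E$. First I will produce, in $C(X)$, a function $g_i$ with $\mu_i(g_i) > \mu_j(g_i)$ for all $j\ne i$. Since the $\mu_j$ are distinct, for each $j\neq i$ there is $h_{ij}\in C(X)$ with $\mu_i(h_{ij})\neq\mu_j(h_{ij})$. This alone does not give a single separating function, so I will use a little more structure. The measures $\mu_1,\dots,\mu_N$ span a finite-dimensional space. Because $\mu_i$ is extreme in $\mathrm{co}\{\mu_1,\dots,\mu_N\}$, the point $\mu_i$ does not lie in the compact convex set $\mathrm{co}\{\mu_j:j\neq i\}$. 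Separating these by Hahn--Banach in the weak$^*$ topology gives $g_i\in C(X)$ with $\mu_i(g_i)>\max_{j\ne i}\mu_j(g_i)$. Next I pass to $E$: since $\iota(E)$ is dense in $C(X)$ and only finitely many strict inequalities are involved, there is $e\in E$ approximating $g_i$ closely enough that $\mu_i(e)>\mu_j(e)$ for all $j\ne i$.

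Now choose $i$ attaining $\max_k\mu_k(\phi)$ and set $\phi_\epsilon=\phi+\epsilon e$ for $\epsilon>0$. For every $j\neq i$,
\[
\mu_i(\phi_\epsilon)-\mu_j(\phi_\epsilon) = \bigl(\mu_i(\phi)-\mu_j(\phi)\bigr) + \epsilon\bigl(\mu_i(e)-\mu_j(e)\bigr) > 0,
\]
since the first bracket is nonnegative and the second is positive. Hence $\phi_\epsilon\in U_\C(E)$ for every $\epsilon>0$. Finally, $E$ is a topological vector space, so scalar multiplication is continuous and $\phi_\epsilon\to\phi$ in $E$ as $\epsilon\to0$; thus $\phi_\epsilon\in V$ for small $\epsilon$. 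This proves density.

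I expect the only delicate point to be the separation step producing $g_i$, together with the transfer from $C(X)$ to $E$. Both are handled by the fact that only finitely many strict inequalities need to be preserved.
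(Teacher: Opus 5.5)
Your proof is correct. It differs from the paper's mainly in how density is obtained.

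\textbf{The paper's route.} The paper works with the complement. It sets $M_i=\{\phi\in E:\mu_i(\phi)\ge\mu(\phi)\ \text{for all }\mu\in M_\C(X,T)\}$ and writes $E\setminus U_\C(E)=\bigcup_{i<j}M_i\cap M_j$. It then shows that each $M_i$ is closed. It shows that each $M_i\cap M_j$ has empty interior by perturbing $\phi$ along a $g\in E$ that merely distinguishes the single pair $\mu_i,\mu_j$. Openness and density then follow from the elementary fact that a finite union of closed sets with empty interior is closed with empty interior.

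\textbf{Your route.} You describe $U_\C(E)$ directly as the finite union over $i$ of the open sets on which $\mu_i$ is the strict maximizer, so openness is immediate. For density you build one direction $e$ that favors $\mu_i$ over all other extreme points at once. This needs a stronger separation: $\mu_i$ from $\mathrm{co}\{\mu_j:j\ne i\}$, via Hahn--Banach in the weak$^*$ topology together with extremality, rather than pairwise separation.

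\textbf{What each approach buys.} Your version gives a slightly stronger, explicit conclusion: whenever $\mu_i$ maximizes for $\phi$, the entire ray $\phi+\epsilon e$, $\epsilon>0$, lies in $U_\C(E)$. Your convex-combination argument also proves that $M^*_\C(X,T;\phi)$ is the convex hull of the maximizing extreme points. The paper's description of the complement relies on this only tacitly, namely that a non-singleton optimal face contains two extreme points. The paper's route is more economical in tools.

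You could also avoid Hahn--Banach. Pick $g\in E$ with $\mu_1(g),\dots,\mu_N(g)$ pairwise distinct; this is possible because $E$ is not a finite union of the proper subspaces $\ker(\mu_i-\mu_j)$. Then take $\epsilon$ small, and $\phi+\epsilon g$ has a unique maximizing extreme point.
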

\begin{proof}
Let $\{\mu_1,\dots,\mu_N\}$ be the extreme points in $M_\C(X,T)$, and define
\[
M_i=\left\{\phi\in E: \text{for all} \, \mu \in M_\C(X,T), \, \int \phi \, d\mu_i\ \geq \int\phi \, d\mu\ \right\},
\]
for all $1\leq i\leq N$. Since $M_{\C}^*(X,T,\phi)$ is always a face of $M_{\C}(X,T)$, if (\ref{Eqn:EOWC}) has a unique solution, then it must be extreme in $M_\C(X,T)$. So the complement of $U_\mathcal{C}(E)$ can be expressed as
\[
U_\mathcal{C}(E)^c=\bigcup_{i<j}M_i\cap M_j.
\]
In the following, we show that $M_i$ is closed and $M_i\cap M_j$ is hollow for each $i\neq j$. Suppose that $\{\phi_\alpha\}$ is a net in $M_i$, with $\phi_\alpha\to\phi$ in $E$. Then $\phi_\alpha\to\phi$ in $C(X)$, since $E$ is continuously embedded in $C(X)$. Then we have $\int\phi_\alpha \, d\mu_i\geq\int\phi_\alpha \, d\mu$ for any $\mu\in M_\C(X,T)$, and hence  $\int\phi \, d\mu_i\geq\int\phi \, d\mu$ for any $\mu\in M_\C(X,T)$. Thus $\phi\in M_i$, and we obtain that $M_i$ is closed.
Now we prove that each $M_i\cap M_j$ is hollow. Since $E$ is densely embedded in $C(X)$, for any $i<j$ there exists $g=g_{ij}\in E$ such that $\int g \, d\mu_i\neq\int g \, d\mu_j$. If $\phi\in M_i\cap M_j$, then for every $\varepsilon>0$ the function $\phi+\varepsilon g$ is not in $M_i\cap M_j$ since $\int(\phi+\varepsilon g) \, d\mu_i\neq\int(\phi+\varepsilon g) \, d\mu_j$. Hence $M_i\cap M_j$ has empty interior whenever $1\leq i<j\leq N$. Therefore, $U_\mathcal{C}(E)$ is open and dense in $E$.  
\end{proof}

The assumption that $M_\C(X,T)$ has only finitely many extreme points is very strong. In the theorem below we consider the general situation in which no assumption is placed on the extreme points. The trade-off for this more general hypothesis is that we can only conclude that $U_{\C}(E)$ is residual (instead of open and dense). % So we strive to get a uniqueness result for a more general $T$. 
% For a specific property $\mathcal{P}$, we want to find a \textit{residual} set $E^\prime$ such that every $f\in E^\prime$ has the property $\mathcal{P}$. A residual set is the set contains a countable intersection of open dense subsets. We say that $\mathcal{P}$ is a \textit{generic} property if there is some residual set $E^\prime$ such that every member of $E^\prime$ has the property $\mathcal{P}$.
To prove this theorem, we follow the idea of the proof of \cite[Theorem 1]{jenkinson2025typical} and find it helpful to refer to {\it Fort's Theorem}, which we now describe. A set-valued map $\varphi:X\to 2^Y$ is called a `{\it correspondence},' and we denote it by $\varphi: X\twoheadrightarrow Y$. Recall that a subset of $X$ is `\emph{residual}' if it is a countable intersection of open and dense subsets of $X$. We first introduce the (semi-)continuity of correspondences.

\begin{definition}[\cite{aliprantis2006infinite}, Definition 17.2]
\label{Def:LowUpCts}
    Suppose $X$ and $Y$ are topological spaces. A correspondence $\varphi:X\twoheadrightarrow Y$ is 
    \begin{enumerate}[(i)]
        \item {\it lower semi-continuous (lsc)} at $x\in X$ if for every open set $\mathcal{O}$ in $Y$ such that $\varphi(x)\cap\mathcal{O}\neq\varnothing$, there is a neighborhood $\mathcal{N}$ of $x$ such that if $z\in\mathcal{N}$ then $\varphi(z)\cap\mathcal{O}\neq\varnothing$; 
        \item {\it upper semi-continuous (usc)} at $x\in X$ if for every open set $\mathcal{O}$ in $Y$ such that $\varphi(x)\subset\mathcal{O}$, there is a neighborhood $\mathcal{N}$ of $x$ such that if $z\in \mathcal{N}$ then $\varphi(z)\subset\mathcal{O}$. 
    \end{enumerate}
    Additionally, we say $\varphi$ is lower semi-continuous on $X$ if it is lsc at every $x\in X$, and $\varphi$ is upper semi-continuous on $X$ if it is usc at every $x\in X$.
\end{definition}

\begin{theorem}[Fort \cite{fort1951points}]
\label{Thm:Fort}
If $\varphi: X\twoheadrightarrow Y$ is an upper (lower) semi-continuous correspondence from a topological space $(X,\tau)$ into the non-empty compact subsets of a metric space
$(Y,d)$, then $F$ is both upper and lower semi-continuous at the points of a residual subset of $X$. Furthermore, if $(X,\tau)$ is a Baire space, then $F$ is continuous at the points of a dense $G_\delta$ subset of $X$.
\end{theorem}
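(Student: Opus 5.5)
The statement just given is Fort's Theorem, a classical result, so I will prove it directly; the paper cites it rather than deriving it. I treat the usc case; the lsc case is symmetric. Write $\mathcal{K}(Y)$ for the non-empty compact subsets of $Y$ with the Hausdorff metric $d_H$. The target is to show that the set of points where $\varphi$ fails to be lsc is a countable union of closed nowhere dense sets, i.e.\ meager. Since an usc compact-valued correspondence that is also lsc at $x$ is Hausdorff-continuous at $x$, the Baire-space addendum then follows at once: the complement of a meager $F_\sigma$ set is a dense $G_\delta$.

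To get countably many test sets I will use a fixed countable family $\{V_n\}$ of open subsets of $Y$. The natural choice is finite unions of balls $B(y,r)$ with rational $r$ and centers in a countable set. Since $Y$ need not be separable, I will instead use the standard trick: work locally on the compact values and take the $V_n$ from a countable base of the union of a countable family of compacts. Alternatively, use the sets $V_{U,k}=\{y: d(y,U)<1/k\}$ as $U$ ranges over finite unions built from $\varphi$-values at a dense subset. I expect this to be the main technical nuisance.

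The core step is as follows. Suppose $\varphi$ is not lsc at $x$. Then there exist $y\in\varphi(x)$ and $r>0$ such that every neighborhood of $x$ contains a $z$ with $\varphi(z)\cap B(y,r)=\varnothing$. Choose the index $n$ with $y\in V_n$ and $\overline{V_n}\subset B(y,r)$. Define
\[
A_n=\{x: \varphi(x)\cap V_n\neq\varnothing\}\setminus \operatorname{int}\{x:\varphi(x)\cap \overline{V_n}\neq\varnothing\}.
\]
By usc, the set $\{x:\varphi(x)\cap \overline{V_n}\neq\varnothing\}$ is closed. Its complement is $\{x:\varphi(x)\subset Y\setminus\overline{V_n}\}$, which is open. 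So $A_n$ is contained in the boundary of a closed set, and such a boundary is closed and nowhere dense. Hence the non-lsc points lie in $\bigcup_n A_n$, which is meager.

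For the lsc case, the symmetric argument replaces "meets" by "contained in". It uses $\{x:\varphi(x)\cap V\neq\varnothing\}$ being open and shows that the non-usc points lie in boundaries of open sets, using compactness to reduce to finite unions of the $V_n$.
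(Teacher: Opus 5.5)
\textbf{Gap: the countable family $\{V_n\}$ exists only when $Y$ is separable.} Your core step is correct, but it fails for the statement as given, which allows an arbitrary metric space $Y$. (The paper itself only cites this result.) You need, for every $y$ in every value $\varphi(x)$ and every $r>0$, an index $n$ with $y\in V_n$ and $\overline{V_n}\subset B(y,r)$. That makes the traces $V_n\cap Z$, where $Z=\bigcup_{x\in X}\varphi(x)$, a countable base of $Z$. So such a family exists only if $Z$ is separable.

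Neither workaround you suggest closes this gap:
\begin{enumerate}[(i)]
\item In general there is no countable family of compact sets whose union contains all the values of $\varphi$.
\item $X$ need not have a countable dense subset.
\item Even when it does, the values of a usc map on a dense set need not come near the witnesses $y$, because usc allows $\varphi(x)$ to be much larger than all nearby values.
\end{enumerate}
What you have actually proved is the separable case. Take $V_n$ to be the balls of rational radius about a countable dense subset of $Y$; then your usc argument is complete. Your lsc sketch is also correct there: cover $\varphi(x)$ by a finite union $G$ of the $V_n$ with $\overline{G}\subset O$, and observe that $x\in\partial\{z:\varphi(z)\not\subset\overline{G}\}$, the boundary of an open set. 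This covers the paper's application, where $Y=M_\C(X,T)$ is compact metrizable, but not the theorem as stated.

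\textbf{A repair for general $Y$.} One fix that needs no base of $Y$ replaces test sets in $Y$ by integer-valued semicontinuous counting functions on $X$.

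\emph{Usc case.} For $\epsilon>0$ let $M_\epsilon(x)$ be the largest cardinality of a subset of $\varphi(x)$ whose points are pairwise at distance $\ge\epsilon$; this is finite by total boundedness.
\begin{enumerate}[(i)]
\item Compactness of $\varphi(x)$ gives $M_{\epsilon-2\eta}(x)=M_\epsilon(x)$ for all small $\eta>0$.
\item Combine this with $\varphi(z)\subset B(\varphi(x),\eta)$, the open $\eta$-neighbourhood, for $z$ near $x$. This shows $M_\epsilon$ is usc, so each set $\{M_\epsilon\le k\}$ is open.
\item Let $x$ be a non-lsc point with witnesses $y,r$, and take $\epsilon\in(0,r)\cap\mathbb{Q}$. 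For $z$ near $x$ with $\varphi(z)\cap B(y,r)=\varnothing$, push a maximal $\epsilon$-separated subset of $\varphi(z)$ into $\varphi(x)$ and add $y$. This gives $M_\epsilon(z)\le M_\epsilon(x)-1$.
\item Hence $x\in\partial\{M_\epsilon\le M_\epsilon(x)-1\}$. These are countably many closed nowhere dense sets.
\end{enumerate}

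\emph{Lsc case.} Use instead $m_\epsilon(x)$, the largest cardinality of a subset of $\varphi(x)$ with pairwise distances $>\epsilon$; it is lsc. By compactness, every non-usc point $x$ has some $r>0$ such that every neighbourhood of $x$ contains $z$ with $\varphi(z)\not\subset B(\varphi(x),r)$. Then $x\in\partial\{m_\epsilon\ge m_\epsilon(x)+1\}$ for every rational $\epsilon\in(0,r)$.

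\emph{Alternative for the usc case.} You can keep your sets $C_V=\{x:\varphi(x)\cap\overline{V}\neq\varnothing\}$, but let $V$ range over a $\sigma$-discrete base $\bigcup_n\mathcal{B}_n$ of $Y$. Compact values together with usc make $\{C_V\}_{V\in\mathcal{B}_n}$ locally finite in $X$. So $\bigcup_{V\in\mathcal{B}_n}\partial C_V$ is closed and nowhere dense for each $n$.
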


\begin{theorem}
\label{Thm:Uniqueness}
Let $(X,T)$ be a topological vector space and $\mathcal{C}\subset C(X)$, and suppose that $M_\C(X,T)\neq\varnothing$. Let $E$ be a topological vector space which is densely and continuously embedded in $C(X)$. Then $U_\mathcal{C}(E)$ is a residual subset of $E$. Moreover, if $E$ is a Baire space, then $U_\mathcal{C}(E)$ is dense in $E$.
\end{theorem}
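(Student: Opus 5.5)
Following the approach of \cite[Theorem 1]{jenkinson2025typical}, I would apply Fort's Theorem (Theorem \ref{Thm:Fort}) to the correspondence
\[
\varphi : E \twoheadrightarrow M_\C(X,T), \qquad \varphi(\phi) = M_\C^*(X,T;\phi).
\]
By Theorem \ref{Thm:Existence} and Proposition \ref{Prop:BasicM*}(i), $\varphi(\phi)$ is a non-empty compact subset of the compact metrizable space $M_\C(X,T)$ (Proposition \ref{Prop:M_C property}), so the target hypothesis of Fort's Theorem holds.

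\textbf{Step 1: upper semicontinuity.} Let $\mathcal{O}$ be open with $\varphi(\phi)\subset \mathcal{O}$, and suppose for contradiction that there are $\phi_\alpha\to\phi$ in $E$ and $\mu_\alpha\in\varphi(\phi_\alpha)\setminus\mathcal{O}$. By continuity of the embedding, $\phi_\alpha\to\phi$ uniformly. Pass to a subnet with $\mu_\alpha\to\mu\in M_\C(X,T)\setminus\mathcal{O}$, using compactness of $M_\C(X,T)\setminus\mathcal{O}$. For any $\nu\in M_\C(X,T)$,
\[
\int\phi_\alpha\,d\mu_\alpha\ge\int\phi_\alpha\,d\nu .
\]
Uniform convergence together with weak$^*$ convergence gives $\int\phi_\alpha\,d\mu_\alpha\to\int\phi\,d\mu$, so $\mu\in\varphi(\phi)\subset\mathcal{O}$, a contradiction. (Since $E$ need not be metrizable, I use nets throughout.)

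\textbf{Step 2: continuity forces a singleton.} Fort's Theorem then yields a residual set $R\subset E$ of points where $\varphi$ is both lsc and usc; when $E$ is Baire, $R$ also contains a dense $G_\delta$ set. I would show that $\varphi(\phi)$ is a singleton for every $\phi\in R$. Suppose $\mu_1\neq\mu_2$ both lie in $\varphi(\phi)$. Since $E$ is dense in $C(X)$, there is $g\in E$ with $\int g\,d\mu_1>\int g\,d\mu_2$. Put $\phi_\varepsilon=\phi+\varepsilon g\to\phi$ as $\varepsilon\to 0^+$. Any $\nu\in\varphi(\phi_\varepsilon)$ satisfies
\[
\int\phi\,d\nu+\varepsilon\int g\,d\nu\ \ge\ \int\phi\,d\mu_1+\varepsilon\int g\,d\mu_1,
\]
and since $\int\phi\,d\mu_1=\max\int\phi\ge\int\phi\,d\nu$, this forces $\int g\,d\nu\ge\int g\,d\mu_1$. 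Now choose the open set
\[
\mathcal{O}=\Bigl\{\nu:\int g\,d\nu<\tfrac12\Bigl(\int g\,d\mu_1+\int g\,d\mu_2\Bigr)\Bigr\}.
\]
It meets $\varphi(\phi)$ at $\mu_2$, but misses $\varphi(\phi_\varepsilon)$ for every $\varepsilon>0$. This contradicts lower semicontinuity at $\phi$. Hence $R\subset U_\C(E)$, so $U_\C(E)$ is residual, and it is dense when $E$ is Baire.

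The main obstacle is the delicate point in Step 1: weak$^*$ convergence of $\mu_\alpha$ combined with norm convergence of $\phi_\alpha$. This is handled by the estimate
\[
\Bigl|\int\phi_\alpha\,d\mu_\alpha-\int\phi\,d\mu\Bigr|\le\|\phi_\alpha-\phi\|+\Bigl|\int\phi\,d\mu_\alpha-\int\phi\,d\mu\Bigr|.
\]
The other points to check are that the topology on $E$ is at least as fine as the one induced from $C(X)$, and that Fort's Theorem as stated applies to a general topological space $E$.
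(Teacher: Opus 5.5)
Your proposal is correct and follows essentially the same route as the paper: Fort's Theorem applied to $\phi\mapsto M_\C^*(X,T;\phi)$, upper semicontinuity by a limiting argument, and failure of lower semicontinuity at non-singleton points via a perturbation $\phi\pm\varepsilon g$, where $g\in E$ separates two optimal measures. The differences are cosmetic. The paper obtains upper semicontinuity through closedness of the graph (the Aliprantis--Border closed graph theorem), whereas you argue directly with nets. The paper also proves the converse inclusion (points of $U_\C(E)$ are continuity points), which is not needed for the residuality conclusion.
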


\begin{proof}
Since $M_{\C}(X,T)$ is non-empty, Proposition \ref{Prop:BasicM*} guarantees that $M^*_{\C}(X,T; \phi)$ is non-empty for all $\phi \in C(X)$. 
We define the correspondence $c: E\twoheadrightarrow  M_\C(X,T)$ by $c(\phi)=M^*_\C(X,T;\phi)$. Note that $c(\phi)$ is non-empty for all $\phi\in E$ by Proposition \ref{Prop:BasicM*} (using that $M_\C(X,T)$ is non-empty by hypothesis).  By \cite[Theorem 17.11]{aliprantis2006infinite}, the upper semi-continuity of $c$ is equivalent to the closedness of the graph $\mathrm{Gr}(c):=\{(\phi,\mu):\phi\in E,\mu\in c(\phi)\}$ in $E\times M_\C(X,T)$. To prove that $\mathrm{Gr}(c)$ is closed, it suffices to show its closure, $\overline{\mathrm{Gr}(c)}$, equals $\mathrm{Gr}(c)$. Suppose $\{(\phi_\alpha,\mu_\alpha)\}$ is a net in $\mathrm{Gr}(c)$ that converges to $(\phi_0,\mu_0)$ in $E \times M_\C(X,T)$. Since $E$ is continuously embedded in $C(X)$, we also have that $\phi_{\alpha}$ converges to $\phi_0$ in $C(X)$. To establish that $\mathrm{Gr}(c)$ is closed, let us show that $\mu_0 \in c(\phi_0)$. Let $\mu \in M_\C(X,T)$. For all $\alpha$ we have $\mu_\alpha\in c(\phi_\alpha)$, and thus 
\begin{equation} \label{Eqn:USMNT}
\int\phi_\alpha \, d\mu_\alpha\geq\int\phi_\alpha \, d\mu.
\end{equation}
As $(\phi_\alpha,\mu_\alpha)\to(\phi_0,\mu_0)$ and $|\int(\phi_\alpha-\phi_0) \, d\mu_\alpha|\leq\int\|\phi_\alpha-\phi_0\| \, d\mu_\alpha\leq \|\phi_\alpha-\phi_0\|\to 0$, we have $\int\phi_\alpha \, d\mu_\alpha=\int(\phi_\alpha-\phi_0) \, d\mu_\alpha+\int\phi_0 \, d\mu_\alpha\to\int\phi_0 \, d\mu_0$. Using this fact and the fact that $\phi_\alpha \to \phi_0$ in (\ref{Eqn:USMNT}) gives $\int\phi_0 \, d\mu_0\geq \int\phi_0 \, d\mu$. Since $\mu \in M_\C(X,T)$ was arbitrary, we obtain that $\mu_0\in c(\phi_0)$. This establishes the closedness of $\mathrm{Gr}(c)$, and hence we conclude that $c$ is upper semi-continuous.% is satisfied. 

By Fort's Theorem (Theorem \ref{Thm:Fort} above), it suffices to show that the set of continuity points for $c$ is precisely $U_\C(E)$. By the previous paragraph $c$ is upper semi-continuous. To complete the proof, let us now show that $c$ is lower semi-continuous at $\phi$ if and only if $\phi\in U_\C(E)$.  %Since $c$ is upper semi-continuous, it suffices to show that $c$ is lower semi-continuous at $\phi$ if and only if $\phi\in U_\C(E)$. 

If $\phi\in U_\C(E)$, then $c(\phi)$ is a singleton. For any open set $\mathcal{O}\subset M_\C(X,T)$ such that $c(\phi)\cap \mathcal{O}\neq\varnothing$, we have that the singleton set $c(\phi)$ satisfies $c(\phi) \subset \mathcal{O}$, and hence $\mathcal{O}$ is a neighborhood of $c(\phi)$. Then, as $c$ is upper semi-continuous, there exists a neighborhood $\mathcal{N}$ of $\phi$ such that if $z\in \mathcal{N}$ then $c(z)\subset\mathcal{O}$, and thus $c(z)\cap\mathcal{O}\neq\varnothing$, as $c(z) = M_{\C}^*(X,T;z)$ is non-empty. Therefore, $c$ is lower semi-continuous at $\phi$.

If $\phi\notin U_\C(E)$, then $c(\phi)$ contains at least two distinct measures $\mu_1 \neq \mu_2$. Then by the density of $E$ in $C(X)$, there exists $g\in E$ such that $\int g \, d\mu_1<\int g \, d\mu_2$. Let $\mathcal{O}_1=\{\mu\in M_\C(X,T):\int g \, d\mu>\int g \, d\mu_1\}$, and notice that $\mathcal{O}_1$ is an open set that contains $\mu_2$ but does not contain $\mu_1$. Hence $\mu_2\in c(\phi)\cap\mathcal{O}_1$. We claim that for any neighborhood $\mathcal{N}$ of $\phi$, there exists $z\in \mathcal{N}$ such that $z\notin c^{-1}(\mathcal{O}_1)=\{h\in E: c(h)\cap\mathcal{O}_1\neq\varnothing\}$. Indeed, let $\mathcal{N}$ be a neighborhood of $\phi$. Choose $z=\phi-\varepsilon g$,  where $\varepsilon>0$ is sufficiently small to ensure that $z \in \mathcal{N}$. Let $\mu \in \mathcal{O}_1$. Since $\mu_1 \in c(\phi)$, we have $\int\phi \, d\mu_1\geq\int\phi \, d\mu$, and since $\mu\in\mathcal{O}_1$, we have $\int g \, d\mu_1<\int g \, d\mu$. Hence we obtain that for any $\mu\in\mathcal{O}_1$
\[
    \int(\phi-\varepsilon g) \, d\mu_1>\int(\phi-\varepsilon g) \, d\mu.
\]
Therefore, $c(z)\cap\mathcal{O}_1=\varnothing$, and we conclude that $c$ is not lower semi-continuous at $\phi$, which finishes the proof.
\end{proof}  

\begin{remark} \label{Rmk:Uniqueness1}
    If we take $E=C(X)$ in Theorem \ref{Thm:Uniqueness}, then we obtain that the set $U_\C=\{\phi\in C(X): |M_\C^*(X,T;\phi)|=1\}$ is residual, as claimed in Theorem \ref{Thm:UniquenessIntro}. Moreover, since $C(X)$ is a Baire space, the set $U_\C$ is dense in $C(X)$.
\end{remark}

\subsection{Prevalent uniqueness} \label{Sect:Prevalence}
In the previous section, we give conditions under which the set of functions with a unique maximizing measure is topologically large. 
In this section, we show that under the same conditions the set of functions with a unique maximizing measure is measure-theoretically large. More precisely, we show that the set $U_{\C}(E)$ is prevalent, a notion introduced in \cite{hunt1992prevalence}. Let us briefly recall the definintion of prevalence. A subset $A$ of a topological group $G$ is said to be {\it Haar null} if there is a Borel set $B\supseteq A$ and a Borel probability measure $\mu$ on $G$ such that $\mu(gBh) = 0$ for every $g, h \in G$. A set $A\subset G$ is \emph{prevalent} if $G\setminus A$ is Haar null. See \cite{elekes2020haar} for more details.
%The prevalent uniqueness problem talks about when $E\subset C(X)$ has a {\it prevalent} subset of functions with unique maximizing measure. Or equivalently, when $U_\mathcal{C}(E)\subset E$ is prevalent? Unlike the residual set, the prevalent set is a probabilistically large subset. It was first introduced in \cite{hunt1992prevalence}.

The prevalent uniqueness for (unconstrained) ergodic optimization was first proved by Morris \cite{morris2021prevalent}. In the following, we give a prevalent uniqueness result for our constrained optimization problem (\ref{Eqn:EOWC}).

For notation, let $F_\C : C(X) \to \R$ be defined by
\[
F_\C(\phi)=\sup_{\mu\in M_\C(X,T)}\int\phi \, d\mu.
\]

Next, we introduce the notion of G\^{a}teaux differentiability of $F_\C$. Recall that a {\it Fr\'{e}chet space} is a completely metrizable locally convex vector space.  

\begin{definition}[G\^{a}teaux differentiability \cite{benyamini2000geometric}]
    Let $f$ be a function defined on an open set in a Banach space $X$ with values in a Banach space $Y$. The function $f$ is said to be {\it G\^{a}teaux differentiable} at a point $x_0$ if there is a bounded linear operator $T: X\to Y$ such that for all $u \in X$, 
    \begin{equation}
    \label{Eqn:Gateaux}
        Tu=\lim_{t\to 0}\frac{f(x_0+tu)-f(x_0)}{t}.
    \end{equation}
    %for every $u\in X$. 
    The (uniquely determined) operator $T$ is called the G\^{a}teaux derivative of $f$ at $x_0$.
\end{definition}

\begin{theorem}
\label{Thm:GdiffCphi}
Let $(X,T)$ be a topological vector space and $\mathcal{C}\subset C(X)$, and suppose that $M_\C(X,T)\neq\varnothing$.
If $E$ is a Fr\'{e}chet space that is densely and continuously embedded in $C(X)$, then $F_\C$ is G\^{a}teaux differentiable at $\phi$ if and only if $\phi\in U_\C(E)$. 
\end{theorem}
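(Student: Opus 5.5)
We work with $F_\C$ restricted to $E$, so the G\^{a}teaux derivative is taken along directions $u\in E$ and is required to be a continuous linear functional on $E$. The plan is to compute the one-sided directional derivatives of $F_\C$ explicitly in terms of the optimal set $M_\C^*(X,T;\phi)$. We then read off both directions of the equivalence from that formula.

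First, $F_\C$ is a supremum of the continuous linear functionals $\phi\mapsto\int\phi\,d\mu$ over $\mu\in M_\C(X,T)$. Hence it is convex and $1$-Lipschitz on $C(X)$. Composed with the continuous embedding, it is convex and continuous on $E$.

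The key step is the identity
\[
\lim_{t\to 0^+}\frac{F_\C(\phi+tu)-F_\C(\phi)}{t}=\max_{\mu\in M_\C^*(X,T;\phi)}\int u\,d\mu
\]
for all $u\in E$, which is a Danskin-type formula.

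For the lower bound, fix $\mu\in M_\C^*(X,T;\phi)$. Then $F_\C(\phi+tu)\ge \int\phi\,d\mu+t\int u\,d\mu$, so the difference quotient is at least $\int u\,d\mu$.

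For the upper bound, take $t_n\downarrow 0$ and choose $\mu_n\in M_\C^*(X,T;\phi+t_nu)$, which exist by Proposition \ref{Prop:BasicM*}. Then
\[
F_\C(\phi+t_nu)-F_\C(\phi)\le \int(\phi+t_nu)\,d\mu_n-\int\phi\,d\mu_n=t_n\int u\,d\mu_n.
\]
By compactness of $M_\C(X,T)$ (Proposition \ref{Prop:M_C property}) we pass to a subsequence with $\mu_n\to\mu_0$. The closed-graph argument from the proof of Theorem \ref{Thm:Uniqueness} (upper semi-continuity of $c$) shows $\mu_0\in M_\C^*(X,T;\phi)$. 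Therefore the $\limsup$ of the quotients is at most $\int u\,d\mu_0$, which is at most the maximum. Applying this along every sequence gives the limit.

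The same formula with $u$ replaced by $-u$ gives the left derivative:
\[
\lim_{t\to 0^-}\frac{F_\C(\phi+tu)-F_\C(\phi)}{t}=\min_{\mu\in M_\C^*(X,T;\phi)}\int u\,d\mu .
\]

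Now suppose $\phi\in U_\C(E)$, so $M_\C^*(X,T;\phi)=\{\mu\}$. Then the one-sided derivatives agree in every direction $u$, and the two-sided limit equals $\int u\,d\mu$. The map $u\mapsto\int u\,d\mu$ is linear. It is bounded on $E$ because $|\int u\,d\mu|\le\|u\|$ and the embedding of $E$ into $C(X)$ is continuous. So $F_\C$ is G\^{a}teaux differentiable at $\phi$.

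Conversely, suppose there are two distinct measures $\mu_1\neq\mu_2$ in $M_\C^*(X,T;\phi)$. Density of $E$ in $C(X)$ gives $u\in E$ with $\int u\,d\mu_1<\int u\,d\mu_2$, exactly as in the proof of Theorem \ref{Thm:Uniqueness}. For this $u$,
\[
\max_{\mu\in M_\C^*(X,T;\phi)}\int u\,d\mu\;\ge\;\int u\,d\mu_2\;>\;\int u\,d\mu_1\;\ge\;\min_{\mu\in M_\C^*(X,T;\phi)}\int u\,d\mu .
\]
So the right and left derivatives in direction $u$ differ, the limit in (\ref{Eqn:Gateaux}) does not exist, and $F_\C$ is not G\^{a}teaux differentiable at $\phi$.

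The main obstacle is the upper bound in the directional-derivative formula. It needs both the existence of optimizers for the perturbed functions and the limiting argument showing that limit points of those optimizers are optimal for $\phi$. Both are available from Proposition \ref{Prop:BasicM*} and the closed-graph argument already carried out in the proof of Theorem \ref{Thm:Uniqueness}.

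The Fr\'{e}chet hypothesis on $E$ is not needed for this equivalence. It presumably enters afterwards, when one applies a prevalence theorem for G\^{a}teaux differentiability of convex continuous functions to deduce that $U_\C(E)$ is prevalent.
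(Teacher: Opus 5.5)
Your proposal is correct and follows essentially the same route as the paper. It uses the same sandwich inequality $\int u\,d\mu_\phi \le \frac{F_\C(\phi+tu)-F_\C(\phi)}{t} \le \int u\,d\mu_{\phi+tu}$ for $t>0$, the same compactness argument that limit points of optimizers for $\phi+t_nu$ are optimal for $\phi$, and the same use of density of $E$ to separate two distinct optimal measures. The only difference is packaging: you derive the one-sided derivatives as the maximum (resp.\ minimum) of $\int u\,d\mu$ over $M_\C^*(X,T;\phi)$ in all cases, whereas the paper proves convergence only when the optimizer is unique and uses just the elementary one-sided bounds for the converse.
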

\begin{proof}  Since $M_{\C}(X,T)$ is non-empty, Proposition \ref{Prop:BasicM*} guarantees that $M^*_{\C}(X,T; \phi)$ is non-empty for all $\phi \in C(X)$. We proceed by establishing some preliminary inequalities. Let $\phi$ and $u$ be in $C(X)$, and let $t \neq 0$. For any $\mu_\phi\in M^*_\mathcal{C}(X,T;\phi)$ and $\mu_{\phi+tu}\in M^*_\mathcal{C}(X,T;\phi+tu)$, we have
\[
\begin{split}
    \int(\phi+tu) \, d\mu_\phi-\int\phi \, d\mu_\phi &\leq F_\C(\phi+tu)-F_\C(\phi)\\
    &\leq\int(\phi+tu) \, d\mu_{\phi+tu}-\int\phi \, d\mu_{\phi+tu}.
\end{split}
\]
Using the linearity of the integral and rearranging gives
\begin{equation}\label{Ineq:Gdiff}
    \begin{cases}
        \int u \, d\mu_\phi\leq\frac{F_\C(\phi+tu)-F_\C(\phi)}{t}\leq\int u \, d\mu_{\phi+tu} & \text{if}\ t>0\\[0.5em]
        \int u \, d\mu_\phi\geq\frac{F_\C(\phi+tu)-F_\C(\phi)}{t}\geq\int u \, d\mu_{\phi+tu} & \text{if}\ t<0\\
    \end{cases}.
\end{equation}
By taking the supremum over $M_\C^*(X,T;\phi)$ on the left hand side and taking the supremum over $M_\C^*(X,T;\phi+tu)$ on the right hand side of both cases in (\ref{Ineq:Gdiff}), we obtain
\begin{equation}\label{Ineq2:Gdiff}
    \begin{cases}
        \sup\limits_{\mu\in M^*_\mathcal{C}(X,T;\phi)}\int u \, d\mu\leq\frac{F_\C(\phi+tu)-F_\C(\phi)}{t}\leq\sup\limits_{\mu\in M^*_\mathcal{C}(X,T;\phi+tu)}\int u \, d\mu, &\text{if}\ t>0\\[1em]
        \sup\limits_{\mu\in M^*_\mathcal{C}(X,T;\phi)}\int u \, d\mu\geq\frac{F_\C(\phi+tu)-F_\C(\phi)}{t}\geq\sup\limits_{\mu\in M^*_\mathcal{C}(X,T;\phi+tu)}\int u \, d\mu, &\text{if}\ t<0
    \end{cases}.
\end{equation}
Subtracting $\sup_{\mu\in M^*_\mathcal{C}(X,T;\phi)}\int u \, d\mu$ from (\ref{Ineq2:Gdiff}) gives
\begin{equation}\label{Ineq3:Gdiff}
\begin{split}
    0&\leq\left|\frac{F_\C(\phi+tu)-F_\C(\phi)}{t}-\sup_{\mu\in M^*_\mathcal{C}(X,T;\phi)}\int u \, d\mu\right|\\
    &\hspace{5em}\leq \left|\sup_{\mu\in M^*_\mathcal{C}(X,T;\phi+tu)}\int u \, d\mu-\sup_{\mu\in M^*_\mathcal{C}(X,T;\phi)}\int u \, d\mu\right|.
\end{split}
\end{equation}
% We first prove that $\phi\in U_\C(E)$ implies that $F_\C$ is G\^{a}teaux differentiable at $\phi$. 

Let us now prove that if $\phi\in U_\C(E)$ then $F_\C$ is G\^{a}teaux differentiable at $\phi$. To that end, let $\phi$ be in $U_\C(E)$. Since $\phi\in U_\C(E)$, there exists $\mu^*\in M_\C(X,T)$ such that $M_\C^*(X,T;\phi)=\{\mu^*\}$. 

\begin{claim}\label{Claim:Gdiff}
    For any $u\in E$, we have
    \begin{equation}
\label{Lim:Gdiff}
\lim_{t\to 0}\sup_{\mu\in M^*_\mathcal{C}(X,T;\phi+tu)}\int u \, d\mu=\int u \, d\mu^*.    
\end{equation}
\end{claim}
\begin{proof}[Proof of \Cref{Claim:Gdiff}]
Select a sequence $\{t_n\}_{n=1}^\infty$ such that $t_n\to 0$ as $n\to\infty$. For each $n \geq 1$, since $M^*_{\C}(X,T; \phi + t_n u)$ is non-empty,
%and
% \[
% \limsup_{t\to 0}\left[\sup_{\mu\in M_\C^*(X,T;\phi+tu)}\int u \, d\mu\right]=\lim_{n\to\infty}\left[\sup_{\mu\in M_\C^*(X,T;\phi+t_nu)}\int u \, d\mu\right].
% \]
there is a $\mu_n$ that satisfies 
\begin{equation} \label{Eqn:CCD}
\mu_n\in\argmax\limits_{\mu\in M^*_\mathcal{C}(X,T;\phi+t_n u)}\int u \, d\mu.
\end{equation}

Since $M_\C(X,T)$ is compact in the weak$^*$ topology, for any subsequence $\{\mu_{n_k}\}_{k=1}^\infty$ of $\{\mu_n\}_{n=1}^\infty$ we can find a further subsequence $\{\mu_{n_{k_j}}\}_{j=1}^\infty$ such that $\{\mu_{n_{k_j}}\}_{j=1}^\infty$ converges to some measure $\mu_0\in M_\C(X,T)$ as $j\to\infty$. Now let us show that $\mu_0\in M_\C^*(X,T;\phi)$ (and hence $\mu_0 = \mu^*$). By (\ref{Eqn:CCD}), for any $\mu\in M_\C(X,T)$, we have
\begin{equation}
\label{Eqn:Gdiff}
\int(\phi+t_{n_{k_j}}u) \, d\mu_{n_{k_j}}\geq \int(\phi+t_{n_{k_j}}u) \, d\mu.
\end{equation}
%Letting $k\to\infty$ in (\ref{Eqn:Gdiff}) gives $\lim_{k\to\infty}\int t_{n_k}u\, d\mu_{n_k}=0$ because
Next observe that $\lim_{j\to\infty}\int t_{n_{k_j}}u\, d\mu_{n_{k_j}}=0$ since
\[
\lim_{j\to\infty}\left|\int(t_{n_{k_j}}u) \, d\mu_{n_{k_j}}\right|\leq \lim_{j\to\infty}\int|t_{n_{k_j}}u| \, d\mu_{n_{k_j}}\leq \lim_{j\to\infty}\|t_{n_{k_j}}u\|= \lim_{j\to\infty} |t_{n_{k_j}}| \cdot \|u\| = 0.
\]
Therefore, taking the limit as $j$ tends to infinity in (\ref{Eqn:Gdiff}) yields $\int\phi \, d\mu_0\geq\int\phi \, d\mu$. Since this inequality holds for any $\mu\in M_\C(X,T)$, we conclude that $\mu_0\in M^*_\C(X,T;\phi)$, and hence $\mu_0=\mu^*$. Thus we have shown that every subsequence $\{\mu_{n_k}\}_{k=1}^\infty$ of $\{\mu_n\}_{n=1}^\infty$ has a further subsequence $\{\mu_{n_{k_j}}\}_{j=1}^\infty$ that converges to $\mu^*$, and therefore we see that $\mu_n\to\mu^*$. 

% If not, there is a subsequence $\{\mu_{n_k}\}_{k=1}^\infty$ such that its convergent subsubsequence $\{\mu_{n_{k_j}}\}_{j=1}^\infty$ does not converges to $\mu^*$, this contradicts the fact that every convergent subsequence converges to $\mu^*$.

By the definition of $\mu_n$ in (\ref{Eqn:CCD}), we have
%for each $\mu_{n_k}$ in the convergent sequence $\{\mu_{n_k}\}$,  
\begin{equation}\label{Eqn:sequ_n}
    \sup_{\mu\in M^*_\mathcal{C}(X,T;\phi+t_n u)}\int u \, d\mu= \int u \, d\mu_{n}.
\end{equation}
% Note that the above inequality is satisfied for every convergent subsequence $\{\mu_{n_k}\}$ of $\{\mu_n\}$. Since $\phi$ is in $U_\C(E)$, there exists $\mu^*$ such that $M_\C^*(X,T;\phi)=\{\mu^*\}$ and every convergent subsequence $\{\mu_{n_k}\}$ converges to the same $\mu^*$. 
Thus, if we let $n\to\infty$ in (\ref{Eqn:sequ_n}), then 
\[
    \lim_{n\to \infty}\left[\sup_{\mu\in M^*_\mathcal{C}(X,T;\phi+t_{n}u)}\int u \, d\mu\right]=\lim_{n\to\infty}\int u\, d\mu_n=\int u\, d\mu^*,
    %=\limsup_{t\to 0}\left[\sup_{\mu\in M^*_\mathcal{C}(X,T;\phi+tu)}\int u \, d\mu\right]\leq \int u \, d\mu^*.
    %=\sup_{\mu\in M_\C^*(X,T;\phi)}\int u \, d\mu.
\]
where we have used the convergence $\mu_n \to \mu^*$ (established in the previous paragraph).
% Similarly, we can select a sequence $\{t_m\}$ such that $t_m\to 0$ when $m\to \infty$, and 
% \[
% \liminf_{t\to 0}\left[\sup_{\mu\in M_\C^*(X,T;\phi+tu)}\int u \, d\mu\right]=\lim_{m\to\infty}\left[\sup_{\mu\in M_\C^*(X,T;\phi+t_m u)}\int u \, d\mu\right].
% \]
% Let $\mu_m$ be defined same as $\mu_n$ in (\ref{Eqn:CCD}), then 
% %there is a convergent subsequence $\{\mu_{m_l}\}$ satisfies
% \[
% \sup_{\mu\in M^*_\mathcal{C}(X,T;\phi+t_{m} u)}\int u \, d\mu\geq \int u \, d\mu_{m}.
% \]
% Note that $\{\mu_{m}\}$ converges to the same $\mu^*\in M_\C^*(X,T;\phi)$, letting $m\to\infty$ gives 
% \[
% \liminf_{t\to 0}\left[\sup_{\mu\in M^*_\mathcal{C}(X,T;\phi+tu)}\int u \, d\mu\right]\geq \int u \, d\mu^*
%=\sup_{\mu\in M_\C^*(X,T;\phi)}\int u \, d\mu.
%\]
% \[
% \begin{split}
%     &\liminf_{k\to \infty}\left[\sup_{\mu\in M^*_\mathcal{C}(X,T;\phi+t_{n_k}u)}\int u \, d\mu\right]\\
%     =&\liminf_{t\to 0}\left[\sup_{\mu\in M^*_\mathcal{C}(X,T;\phi+tu)}\int u \, d\mu\right]\geq \int u \, d\mu^*=\sup_{\mu\in M_\C^*(X,T;\phi)}\int u \, d\mu.
% \end{split}
% \]
% Therefore, as $t\to 0$, we have
% \[
% \sup_{\mu\in M^*_\mathcal{C}(X,T;\phi+tu)}\int u \, d\mu\to\sup_{\mu\in M^*_\mathcal{C}(X,T;\phi)}\int u \, d\mu,
% \]

Since the above limit is satisfied for every sequence $\{t_n\}_{n=1}^\infty$ with the property $t_n\to 0$ as $n\to\infty$, we conclude that the limit (\ref{Lim:Gdiff}) exists and is equal to $\int u\, d\mu^*$. This completes the proof of \Cref{Claim:Gdiff}.
\end{proof}

Next, for any $\phi\in C(X)$, we define the operator $T$ by
\[
T(u):=\int u \, d\mu^*.
\]
By (\ref{Ineq3:Gdiff}) and \Cref{Claim:Gdiff}, if $\phi\in U_\C(E)$, then $\lim_{t\to 0}\frac{F_\C(\phi+tu)-F_\C(\phi)}{t}=\int u\, d\mu^* = T(u)$. % by the squeeze theorem and it is equal to that $T(u)$ we have defined above. 
%We claim that when $\phi\in U_\C(E)$ with $M_\C^*(X,T;\phi)=\{\mu^*\}$, $T$ is a linear and bounded operator.  %Since the limit (\ref{Lim:Gdiff}) is equal to $\sup_{\mu\in M_\C^*(X,T;\phi)}\int u\, d\mu=\int u\, d\mu_0$, we have $T(u)=\int u \, d\mu_0$. 
The linearity of $T$ is given by the linearity of integration. Also, $T$ is bounded, since $|T(u)|\leq \int |u|\, d\mu^*\leq \|u\|$. Therefore, we conclude that $F_\C$ is G\^{a}teaux differentiable at $\phi$. 

Next let us prove that if $F_\C$ is G\^{a}teaux differentiable at $\phi$, then $\phi\in U_\C(E)$. This part is similar to the proof of \cite[Theorem 1]{morris2021prevalent}, and we write the proof here for completeness. Suppose $\phi\notin U_\C(E)$, and let us show that $\lim_{t\to 0}\frac{F_\C(\phi+tu)-F_\C(\phi)}{t}$ does not exist for some $u\in E$. Since $M^*_\mathcal{C}(X,T;\phi)$ is not a singleton, there are at least two distinct elements $\mu_1^*,\mu_2^*$ in $M_\C^*(X,T;\phi)$. Since $E$ is dense in $C(X)$, there exists a function $u\in E$ such that $\int u \, d\mu_1^*>\int u \, d\mu_2^*$. 

% Let us denote $\mu_n$ by a member of $M_\C^*(X,T;\phi+\frac{1}{n}u)$ that maximizes $\int u\, d\mu$, that is,
% \[
% \mu_n\in\argmax_{\mu\in M^*_\mathcal{C}(X,T;\phi+\frac{1}{n} u)}\int u \, d\mu.
% \]
% By compactness of $M_\C(X,T)$, we can select a subsequence $\{\mu_{n_k}\}_{k=1}^\infty$ such that $\mu_{n_k}\to\mu_2^*$. 

By (\ref{Ineq2:Gdiff}), we have
\[
\liminf_{t\to 0^+}\frac{F_\C(\phi+tu)-F_\C(\phi)}{t}\geq\sup_{\mu\in M^*_\mathcal{C}(X,T;\phi)}\int u \, d\mu\geq\int u \, d\mu_1^*.
\]
By replacing $u$ with $-u$, we have
\[
\liminf_{t\to 0^+}\frac{F_\C(\phi+t(-u))-F_\C(\phi)}{t}\geq\sup_{\mu\in M_\C^*(X,T;\phi)}\int (-u)\, d\mu.
\]
Letting $s = -t$, and using the previous inequality, we see that
\[
\begin{split}
    -\limsup_{s\to 0^-}\frac{F_\C(\phi+su)-F_\C(\phi)}{s}&=\liminf_{(-t)\to 0^-}\left[-\frac{F_\C(\phi+(-t)u)-F_\C(\phi)}{-t}\right]\\
    &=\liminf_{t\to 0^+}\frac{F_\C(\phi+(-t)u)-F_\C(\phi)}{t}\\
    &\geq -\inf_{\mu\in M_\C^*(X,T;\phi)}\int u\, d\mu.
\end{split}
\]
Rewriting this inequality, we obtain 
\[
    \limsup_{t\to 0^-}\frac{F_\C(\phi+tu)-F_\C(\phi)}{t}\leq \inf_{\mu\in M_\C^*(X,T;\phi)}\int u\, d\mu\leq\int u\, d\mu^*_2.
\]
By our choice of $u$ and the preceding inequalities, we conclude that 
\begin{equation*}
    \liminf_{t\to 0^-}\frac{F_\C(\phi+tu)-F_\C(\phi)}{t} \geq \int u \, d\mu_1^* > \int u \, d\mu_2^* \geq \limsup_{t\to 0^+}\frac{F_\C(\phi+tu)-F_\C(\phi)}{t},
\end{equation*}
and hence $\lim_{t\to 0}\frac{F_\C(\phi+tu)-F_\C(\phi)}{t}$ does not exist. Thus, $F_\C$ is not G\^{a}teaux differentiable at $\phi$, which completes the proof.
\end{proof}

\begin{theorem}
\label{Thm:PrevalentUniq}
Let $(X,T)$ be a topological vector space and $\mathcal{C}\subset C(X)$, and suppose that $M_\C(X,T)\neq\varnothing$.
If $E$ is a separable Fr\'{e}chet space that is densely and continuously embedded in $ C(X)$, then $U_\C(E)$ is a prevalent subset of $E$.  
\end{theorem}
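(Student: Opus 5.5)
The plan is to combine Theorem \ref{Thm:GdiffCphi} with a prevalence version of Mazur's theorem: a continuous convex function on a separable Fréchet space is Gâteaux differentiable off a Haar null set. This is the route Morris used in the unconstrained case \cite{morris2021prevalent}.

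\textbf{Step 1: $F_\C$ restricted to $E$ is convex and continuous.} Convexity holds because $F_\C$ is a supremum of the linear functionals $\phi\mapsto\int\phi\,d\mu$, with $\mu\in M_\C(X,T)$. The family is non-empty by hypothesis, so $F_\C$ is finite everywhere. For continuity, note that $|F_\C(\phi)-F_\C(\psi)|\le\|\phi-\psi\|$, so $F_\C$ is $1$-Lipschitz on $C(X)$. Since the embedding $E\hookrightarrow C(X)$ is continuous, $F_\C|_E$ is continuous for the Fréchet topology of $E$.

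\textbf{Step 2: invoke the prevalence version of Mazur's theorem.} I would use the following result, available in the literature (e.g.\ \cite{benyamini2000geometric}, and used in \cite{morris2021prevalent}). If $E$ is a separable Fréchet space and $F:E\to\R$ is continuous and convex, then the set of points where $F$ fails to be Gâteaux differentiable is Haar null.

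The idea behind it is as follows. Fix a dense sequence $(u_k)$ in $E$. For each $k$, the convex function $t\mapsto F(\phi+tu_k)$ is differentiable at all but countably many $t$. Hence, for every $\phi$, the set of points of the line $\phi+\R u_k$ where the directional derivative in direction $u_k$ fails to exist is null for any non-atomic measure on that line. So the set $N_k$ of such bad points is Haar null, witnessed by a non-atomic measure carried by the line $\R u_k$, and it is Borel. Haar null sets are closed under countable unions in Polish groups, so $\bigcup_k N_k$ is Haar null. Off this union, $F$ has two-sided directional derivatives in every direction $u_k$. Convexity and local Lipschitz continuity then give a continuous linear derivative on the whole space, because the directional derivative $F'(\phi;\cdot)$ is sublinear and continuous, and it is linear on a dense set.

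\textbf{Step 3: conclude.} Theorem \ref{Thm:GdiffCphi} shows that, for $\phi\in E$, Gâteaux differentiability of $F_\C$ at $\phi$ is equivalent to $\phi\in U_\C(E)$. In Steps 1–2 I treat $F_\C$ as a function on $E$ with directions $u\in E$. This matches the proof of Theorem \ref{Thm:GdiffCphi}, which only uses directions $u\in E$ and the density of $E$ in $C(X)$. Therefore $E\setminus U_\C(E)$ is exactly the set of non-differentiability points of $F_\C|_E$. By Step 2 this set is Haar null, so $U_\C(E)$ is prevalent.

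\textbf{Main obstacle.} The main difficulty is Step 2 in the Fréchet, as opposed to Banach, setting. Two points need care. First, one must check that the non-differentiability set is Borel (or contained in a Borel Haar null set). This follows by writing it via rational difference quotients along the countable family $u_k$. Second, passing from differentiability along a dense set of directions to full Gâteaux differentiability requires continuity of the sublinear function $F'(\phi;\cdot)$ on $E$. I would get this from the bound $|F'(\phi;u)|\le\|u\|_{C(X)}$ together with the continuity of the embedding.
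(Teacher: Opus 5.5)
Your argument is correct and has the same skeleton as the paper's proof: regularity of $F_\C$ on $E$, then G\^{a}teaux differentiability off a Haar null set, then Theorem \ref{Thm:GdiffCphi}. The key lemma is different, though.

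The paper uses only the $1$-Lipschitz bound. It cites Christensen's theorem that a Lipschitz real-valued map on a separable Fr\'{e}chet space is G\^{a}teaux differentiable off a Haar null set. You use convexity of $F_\C$ instead, and essentially prove the Mazur-type prevalence statement yourself.

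Your route is more self-contained:
\begin{enumerate}[(i)]
\item The one-variable input is only that a convex function has countably many kinks.
\item Borel measurability of each $N_k$ follows from monotonicity of the difference quotients.
\item The passage from the directions $u_k$ to all of $E$ is automatic. The set $\{u : F'(\phi;u)+F'(\phi;-u)=0\}$ is closed, by $|F'(\phi;u)|\le\|u\|$ and continuity of the embedding. It contains the dense set $\{u_k\}$, and an odd sublinear functional is linear. (This is the precise form of your ``linear on a dense set''.)
\end{enumerate}

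For a merely Lipschitz function, step (iii) is the genuinely hard part of Christensen's theorem. So you replace a heavy black box with elementary convex analysis plus the fact that countable unions of Haar null sets are Haar null. You also need only continuity of $F_\C|_E$, not a notion of Lipschitz map on a Fr\'{e}chet space, which the paper handles somewhat informally.

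The paper's route is shorter on the page and would apply unchanged to non-convex Lipschitz value functions. Here convexity comes for free, so nothing is lost by using it.
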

\begin{proof}
Since $M_{\C}(X,T)$ is non-empty, Proposition \ref{Prop:BasicM*} guarantees that $M^*_{\C}(X,T; \phi)$ is non-empty for all $\phi \in C(X)$. First, we claim that the map $F_\C:  C(X)\to\mathbb R$ is 1-Lipschitz. For any $f$ and $g$ in $C(X)$, let $\mu_f$ and $\mu_g$ be elements of $M_\C^*(X,T;f)$ and $M_\C^*(X,T;g)$, respectively. Then we have 
\begin{equation*}
    F_\C(f)-F_\C(g)
    \leq \int f \, d\mu_f-\int g \, d\mu_f=\int(f-g) \, d\mu_f\leq \|f-g\|,
\end{equation*}
and
\begin{equation*}
    F_\C(g)-F_\C(f)\leq \int g \, d\mu_g-\int f \, d\mu_g=\int(g-f) \, d\mu_g\leq \|g-f\|,
\end{equation*}
which implies $|F_\C(f)-F_\C(g)|\leq\|f-g\|$. Hence $F_\C(\cdot)$ is $1$-Lipschitz on $C(X)$.

Since $E$ is continuously embedded in $ C(X)$, the supremum norm $\|\cdot\|$ is a continuous semi-norm on $E$, so $F_\C$ is Lipschitz on $E$. By Christensen \cite{christensen1973measure}, a Lipschitz real-valued map $F$ on a separable Fr\'{e}chet space is G\^{a}teaux differentiable at all points except for a Haar null set, i.e., the set of points where $F$ is G\^{a}teaux differentiable is a prevalent set. By Theorem \ref{Thm:GdiffCphi}, $F_\C$ is G\^{a}teaux differentiable at $\phi$ if and only if $\phi\in U_\C(E)$. We conclude that $U_\C(E)$ is prevalent.
\end{proof}

\begin{remark}
    If $E=C(X)$ in \Cref{Thm:PrevalentUniq}, then the conditions for Theorem \ref{Thm:PrevalentUniq} are satisfied, and hence the set $U_\C=\{\phi\in C(X): |M_\C^*(X,T;\phi)|=1\}$ is a prevalent subset of $C(X)$. In combination with Remark \ref{Rmk:Uniqueness1}, we have proved Theorem \ref{Thm:UniquenessIntro}.
\end{remark}

\section{Realization} \label{Sect:Realization}

In this section we take up the question of when a subset of $S\subset M_\C(X,T)$ can be `realized' as the solution set $M_{\C}^*(X,T;\phi)$ for some function $\phi\in C(X)$. In other words, given $S\subset M_\C(X,T)$, can we find a function $\phi\in C(X)$ such that $M^*_\C(X,T;\phi)=S$? By Proposition \ref{Prop:Property M_C^*}, the solution set $M_{\C}^*(X,T;\phi)$ is always a closed face of $M_{\C}(X,T)$. Jenkinson showed in \cite{jenkinson2006every} that every closed face of $M(X,T)$ can be realized as a solution set for the (unconstrained) ergodic optimization problem (\ref{Eqn:EO}).

\begin{theorem}[\cite{jenkinson2006every}, Theorem 4]
\label{Thm:JenRealization}
    Suppose $(X,T)$ is a topological dynamical system and $F$ is a closed face of $M(X,T)$. Then there exists a function $\phi\in C(X)$ such that $M^*(X,T;\phi)=F$. 
\end{theorem}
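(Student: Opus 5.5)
The statement to prove is Jenkinson's theorem, which the paper cites. My plan has three steps: build a continuous affine functional on $M(X,T)$ that is zero on $F$ and strictly negative off $F$, pass to a continuous function via \Cref{Prop:JenRepresentation}, and then read off the maximizing set.

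\emph{Step 1: the functional.} Since $M(X,T)$ is a compact metrizable simplex and $F$ is a closed face, I would invoke the result already used in the proof of \Cref{Prop:finitetype}, namely \cite[Corollary 3.13]{fonf2001infinite}. It provides a weak$^*$ continuous affine functional $l:M(X,T)\to\R$ with
\[
l|_F\equiv 0 \quad\text{and}\quad l<0 \text{ on } M(X,T)\setminus F.
\]

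\emph{Step 2: the function.} By \Cref{Prop:JenRepresentation}, there is $\phi\in C(X)$ with $l(\mu)=\int\phi\,d\mu$ for all $\mu\in M(X,T)$.

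\emph{Step 3: identifying the maximizers.} The supremum of $\int\phi\,d\mu$ over $M(X,T)$ equals $0$. It is at most $0$ because $l\le 0$ everywhere, and it is attained because $F$ is non-empty (faces are non-empty by definition) and $l$ vanishes on $F$. The maximizing set is therefore exactly $\{\mu : l(\mu)=0\}$, which equals $F$ because $l<0$ off $F$. This gives $M^*(X,T;\phi)=F$.

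\emph{Main obstacle.} Everything except Step 1 is routine, and Step 1 is where the simplex structure is essential. If I did not want to rely on the cited corollary, I would construct $l$ directly as follows.

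First take a countable dense family $(f_n)$ in the unit ball of $C(X)$. For each $n$, define on $M(X,T)$
\[
\beta_n(\mu)=\int f_n\,d\mu-\sup_{\nu\in F}\int f_n\,d\nu .
\]
Next, define $\alpha_n$ to be $0$ on $F$ and $-2$ off $F$. Then $\alpha_n$ is upper semicontinuous and concave: it is concave because $F$ is a face, and upper semicontinuous because $F$ is closed.

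I would then apply Edwards' separation theorem on the simplex, as in \Cref{Prop:representation}, to obtain continuous affine functionals $l_n$ squeezed between functions built from these. The aim is $l_n\le 0$, with $l_n=0$ on $F$ and $l_n<0$ on the part of $M(X,T)$ where $\mu$ is separated from $F$ by $f_n$.

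Finally, set $l=\sum_n 2^{-n}l_n$. Because the $f_n$ are dense, every $\mu\notin F$ is separated from $F$ by some $f_n$, so $l(\mu)<0$ for every such $\mu$. The delicate point is arranging the semicontinuity and concavity/convexity hypotheses that Edwards' theorem requires, and that is where I expect the difficulty to lie.
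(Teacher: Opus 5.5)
Your Steps 1--3 are correct and follow the route the paper indicates. Closed faces of the metrizable simplex $M(X,T)$ are exposed by a weak$^*$ continuous affine functional; the paper points to Davies and Edwards for this, and you use the Fonf--Lindenstrauss--Phelps corollary already invoked in \Cref{Prop:finitetype}. Then \Cref{Prop:JenRepresentation} turns that functional into $\phi$, whose maximizing set is exactly $\{l=0\}=F$.

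In your optional fallback, $\alpha_n$ is convex, not concave: if $t\mu+(1-t)\nu\in F$ with $0<t<1$, then $\mu,\nu\in F$ because $F$ is a face. A convex upper semicontinuous function is exactly what Edwards' theorem requires below. So sandwiching between $\alpha_n$ and the continuous concave function $\min(0,-\beta_n)$ already gives the desired $l_n$, and the difficulty you anticipated does not arise.
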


% The following result is helpful in this context.

% \begin{proposition}[\cite{jenkinson2006every}]
% \label{Prop:JenRepresentation}
%     Suppose $l:M(X,T)\to\R$ is weak$^*$ continuous and affine. Then there exists $g\in C(X)$ such that
%     \[
%     l(\mu)=\int g \, d\mu\quad\text{for all}\ \mu\in M(X,T).
%     \]
% \end{proposition}

%Let $M^*(X,T;\phi)$ be the set of solutions to (\ref{Eqn:EO}), a significant result in \cite{jenkinson2006every} is that $M^*(X,T;\phi)$ is a closed face of $M(X,T)$ for any $\phi\in C(X)$. 
Let $K$ be a convex subset of a topological vector space. In convex analysis, a face $F$ of $K$ is said to be {\it exposed} if there exists an affine functional $l:K\to\R$ such that $l|_F\equiv 0$ and $l|_{K\setminus F}<0$. We are particularly interested in closed faces because any closed face of a compact metrizable simplex is exposed (see \cite{davies1967generalized,edwards1966minimum}). Using this fact in combination with Proposition \ref{Prop:JenRepresentation} above, one may show Jenkinson's result that any closed face of $M(X,T)$ can be realized by a function $\phi\in C(X)$.    

Note that when the facial property (see Definition \ref{Defn:Facial}) holds, $M_\C(X,T)$ is a closed face of $M(X,T)$ and is also a simplex. With this observation, the following result is an easy consequence of \Cref{Thm:JenRealization}.

\begin{proposition}
    Let $(X,T)$ be a topological dynamical system and $\C\subset C(X)$. If $M_\C(X,T)$ is non-empty and the facial property holds, then for any closed face $F$ of $M_\C(X,T)$, there exists a function $\phi\in C(X)$ such that $M_\C^*(X,T;\phi)=F$.
\end{proposition}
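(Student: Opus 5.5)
The plan is to reduce the statement to \Cref{Thm:JenRealization}, using the fact that faces of faces are faces. By the remark following Definition \ref{Defn:Facial}, since $M_\C(X,T)$ is non-empty and $(X,T,\C)$ has the facial property, $M_\C(X,T)$ is a face of $M(X,T)$; it is closed by \Cref{Prop:M_C property}(i). Let $F$ be a closed face of $M_\C(X,T)$. First I will check that $F$ is a face of $M(X,T)$. Suppose $v_1,v_2\in M(X,T)$, $\alpha\in(0,1)$, and $\alpha v_1+(1-\alpha)v_2\in F$. Since $F\subset M_\C(X,T)$ and $M_\C(X,T)$ is a face of $M(X,T)$, we get $v_1,v_2\in M_\C(X,T)$. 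Then, since $F$ is a face of $M_\C(X,T)$, we get $v_1,v_2\in F$. Closedness of $F$ in $M(X,T)$ is immediate because $M_\C(X,T)$ is closed.

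Next I will apply \Cref{Thm:JenRealization} to the closed face $F$ of $M(X,T)$. This gives $\phi\in C(X)$ such that the set of $\mu\in M(X,T)$ maximizing $\int\phi\,d\mu$ over all of $M(X,T)$ is exactly $F$. Write $m=\max_{\mu\in M(X,T)}\int\phi\,d\mu$.

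Finally I will show that $M_\C^*(X,T;\phi)=F$. Since $F\subset M_\C(X,T)\subset M(X,T)$ and every $\mu\in F$ attains the global value $m$, the constrained supremum equals $m$ and is attained precisely on those $\mu\in M_\C(X,T)$ with $\int\phi\,d\mu=m$. This set is $M_\C(X,T)\cap F=F$.

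There is no real obstacle here. The only point needing care is the transitivity of faces together with the observation that restricting the global optimizer set to $M_\C(X,T)$ does not change it, because $F$ is non-empty and already lies inside $M_\C(X,T)$. Non-emptiness of $F$ is built into the definition of a face, so $m$ is attained in $M_\C(X,T)$.
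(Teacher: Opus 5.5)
Your proposal is correct and follows essentially the same route as the paper: use the facial property to see that $F$ is a closed face of $M(X,T)$, then apply \Cref{Thm:JenRealization} and restrict to $M_\C(X,T)$. You also write out the transitivity of faces and the final restriction step, both of which the paper leaves implicit.
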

\begin{proof}
    When the facial property holds, $M_\C(X,T)$ itself is a closed face of $M(X,T)$, so any closed face $F$ of $M_\C(X,T)$ is also a closed face of $M(X,T)$. Therefore, by \Cref{Thm:JenRealization}, there is a function $\phi_F\in C(X)$ such that $F=M^*(X,T;\phi_F)$. Since $F \subset M_\C(X,T)$, we conclude that $F = M_{\C}^*(X,T;\phi_F)$. 
\end{proof}

%Recall that by \Cref{Prop:finitetype}, if the triple $(X,T,\C)$ has the facial property, then $(X,T,\C)$ has finite type. 
%$ has finite-type constraints whenever the facial property holds. 
% \begin{proposition}
%     If the facial property holds, then there exists $\phi \in C(X)$ such that $M_{\C}(X,T) = M_{\{\phi\}}(X,T)$, and hence $M_{\C}(X,T)$ has finite-type constraints.
% \end{proposition}
% \begin{proof}
% If the facial property holds, then $M_\C(X,T)$ is a closed face of $M(X,T)$ and thus it is exposed. The rest of the proof is exactly the same as the proof of Proposition \ref{Prop:finitetype}. 
% \end{proof}

The hypothesis that the facial property holds may be restrictive. However, if $M_\C(X,T)$ does not satisfy the facial property, then it is not a closed face of $M(X,T)$, and the closed faces of $M_\C(X,T)$ may not be exposed. Next we give a more general realization result for constrained systems $(X,T,\C)$ of finite type. Note that this hypothesis is indeed more general than the facial property by Proposition \ref{Prop:finitetype}. The following notion in convex analysis is useful in this context.

\begin{definition}[Finite codimensional slice]
Suppose $K$ is a convex subset of a vector space and $h_1,\dots,h_n$ are affine functionals on $K$ and that
\[
M_K=\{x\in K: h_i(x)=0,\ i=1,\dots,n\}.
\]
Then $M_K$ is called a finite codimensional slice of $K$. If $K$ is a compact convex set and if $M_K$ is closed in $K$, the we call $M_K$ a closed finite codimensional slice of $K$.
\end{definition}

\begin{lemma}
\label{Prop:finitecodim}
If $M_\C(X,T)$ has finite-type constraints, then it is a finite codimensional slice of $M(X,T)$.    
\end{lemma}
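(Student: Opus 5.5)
The argument simply unwinds the definitions. Since $(X,T,\C)$ has finite type, there is a finite constraint set $\C' = \{g_1,\dots,g_n\} \subset C(X)$ with $M_{\C'}(X,T) = M_\C(X,T)$. Applying Lemma \ref{Lemma:McapH} to $\C'$, we get
\[
M_\C(X,T) = M_{\C'}(X,T) = M(X,T)\cap \bigcap_{i=1}^n \ker(J_{g_i}) = \bigl\{\mu\in M(X,T) : J_{g_i}(\mu)=0,\ i=1,\dots,n\bigr\}.
\]
It therefore suffices to check that each $h_i := J_{g_i}|_{M(X,T)}$, given by $h_i(\mu)=\int g_i\,d\mu$, is an affine functional on the convex set $K = M(X,T)$.

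Affineness follows from linearity of the integral in the measure. For $\mu,\nu\in M(X,T)$ and $\alpha\in[0,1]$ (the only values for which the convex combination stays in $K$), we have
\[
h_i(\alpha\mu+(1-\alpha)\nu)=\alpha\int g_i\,d\mu+(1-\alpha)\int g_i\,d\nu .
\]
In fact $h_i$ is the restriction of the linear functional $J_{g_i}$ on $C(X)^*$. With $K=M(X,T)$ and these $h_1,\dots,h_n$, the display above is exactly the definition of a finite codimensional slice $M_K$, which proves the lemma.

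As an addendum, relevant to the later use of this lemma in the realization result, the slice is also closed. Each $h_i$ is weak$^*$ continuous because $g_i\in C(X)$, so $M_\C(X,T)$ is closed in $M(X,T)$; this also follows from Proposition \ref{Prop:M_C property}(i). Hence $M_\C(X,T)$ is a closed finite codimensional slice of the compact convex set $M(X,T)$.

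The proof has no real obstacle. The only step needing any care is replacing $\C$ by the finite set $\C'$ before invoking Lemma \ref{Lemma:McapH}, since $\C$ itself may be infinite.
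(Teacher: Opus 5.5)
Your proof is correct and is exactly the argument the paper intends: the paper omits the proof as immediate from the definitions and Lemma~\ref{Lemma:McapH}, which amounts to replacing $\C$ by the finite set $\C'$ and noting that each map $\mu\mapsto\int g_i\,d\mu$ is affine (indeed the restriction of a linear, weak$^*$ continuous functional). One harmless quibble: your parenthetical claim that $\alpha\in[0,1]$ are the only values keeping $\alpha\mu+(1-\alpha)\nu$ in $M(X,T)$ is not literally true (e.g.\ when $\mu=\nu$), but since $h_i$ is the restriction of a linear map it is affine for every admissible $\alpha$ anyway.
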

We omit the proof, as it is immediate from the definitions and \Cref{Lemma:McapH}.
%\begin{proof}
    
    % If $M_\C(X,T)$ has finite-type constraints, then there exists a finite set $\mathcal{C}'=\{\phi_1,\dots,\phi_n\}$ such that $M_\C(X,T)=M_{\C'}(X,T)$. Let $J:C(X)\to[C(X)]^{**}$ be the natural embedding and $h_i=J_{\phi_i}|_{M(X,T)}$, and note that $h_i$ is affine on $M(X,T)$. Then
    % \[
    % M_\C(X,T)=\{\mu\in M(X,T): h_i(\mu)=0,\ i=1,\dots,n\}.
    % \]
    % Hence $M_\C(X,T)$ is a closed finite codimensional slice of $M(X,T)$.
%\end{proof}

\begin{remark}
By Proposition \ref{Prop:finitetype}, if $M_\C(X,T)$ is a closed face of $M(X,T)$, then it is a closed slice of codimension one.    
\end{remark}

The following result is helpful in establishing our main realization result below.
\begin{proposition}[\cite{lau1973infinite}, credited to Lazar]
\label{Prop:Lazar}
Suppose $K$ is a compact convex set and that $M$ is a closed finite codimensional slice in $K$. If $F$ is a closed face of $M$, then there exists a closed face $F_1$ of $K$ such that $F=M\cap F_1$. If $F$ is a $G_\delta$ set in $H_1\cap K$, then $F$ is a $G_\delta$ set in $K$.
\end{proposition}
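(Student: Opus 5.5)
The plan is to construct $F_1$ explicitly as the face of $K$ generated by $F$ (or a closed version of it), and then check $M\cap F_1=F$ using the affine equations that cut out $M$. Write $M=\{x\in K: h_i(x)=0,\ i=1,\dots,n\}$ with the $h_i$ affine; since $M$ is a closed slice, I will work with the $h_i$ as continuous, which is the situation where we apply it, via Lemma \ref{Prop:finitecodim}. First I would treat the \emph{algebraic} face generated by $F$,
\[
\widehat F=\{x\in K:\ \exists\, y\in K,\ t\in(0,1] \text{ with } tx+(1-t)y\in F\}.
\]
A routine check shows $\widehat F$ is a face of $K$ containing $F$.

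The key observation is that $M\cap \widehat F=F$, and it holds in any finite codimension. Take $x\in M\cap\widehat F$, and let $z=tx+(1-t)y\in F\subset M$ with $t<1$ (the case $t=1$ is trivial). For each $i$ we have $0=h_i(z)=t\,h_i(x)+(1-t)h_i(y)=(1-t)h_i(y)$, so $h_i(y)=0$ for all $i$. Hence $y\in M$. Since $F$ is a face of $M$ and $z\in F$ is a proper convex combination of $x,y\in M$, we get $x\in F$. This part is purely algebraic.

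The main obstacle is closedness: $\widehat F$ need not be closed. I would replace it by $F_1$, the intersection of all closed faces of $K$ containing $F$; this is again a closed face and contains $F$. The difficulty is then to show $F_1\cap M\subset F$, since taking closures may add points of $M$ outside $F$. To control this I would argue by induction on $n$, reducing to the codimension-one case $M=K\cap\{h=0\}$. Applying the one-step result to the chain $K\supset K\cap\{h_1=0\}\supset\cdots$ gives the general case, provided the intermediate slices are compact convex, which they are.

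In codimension one I would try to show that $\overline{\widehat F}\cap M=F$ and that $\overline{\widehat F}$ is itself a face of $K$. Take $x_k\to x$ with $z_k=t_kx_k+(1-t_k)y_k\in F$. Passing to subsequences by compactness of $K$ and $F$, the danger is $t_k\to 0$. I would handle this by splitting $K$ into the half-spaces $\{h\ge0\}$ and $\{h\le0\}$. On each half a point of $\widehat F$ can be rewritten, using a second point on the opposite side, so that the coefficient is bounded below in terms of $|h|$ and a uniform bound for $\sup_K|h|$. Near $M$ the rewriting should instead use the face property of $F$ in $M$. If this estimate fails, the fallback is to invoke the statement as given in \cite{lau1973infinite}. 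The $G_\delta$ assertion would follow by noting that $F=F_1\cap M$ with $M$ closed, so a $G_\delta$ representation of $F$ relative to the slice lifts to $K$ by intersecting with the open sets $\{|h_i|<1/m\}$.
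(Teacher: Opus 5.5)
The paper gives no proof of this proposition; it simply imports it from Lau (credited to Lazar). Your argument is therefore an independent, self-contained route, and its skeleton is sound:
\begin{itemize}
\item $\widehat F$ is a face of $K$.
\item Your computation $M\cap\widehat F=F$ is correct, and it uses nothing about the number of constraints.
\item The induction along $K\supset K\cap\{h_1=0\}\supset\cdots\supset M$ correctly reduces everything to codimension one once the $h_i$ are continuous. Continuity is an added hypothesis, but it is exactly the situation in the realization theorem, where the constraints are $\mu\mapsto\int g\,d\mu$.
\item The $G_\delta$ remark is fine, since $M=\bigcap_m\{x\in K:\max_i|h_i(x)|<1/m\}$.
\end{itemize}

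The gap is the codimension-one closedness. Because the algebraic identity holds for any family of constraints, this step is the only place finiteness can enter, and it carries the whole content of the result. Yet you leave it at ``I would try'', with a fallback to citing the very statement being proved. Your remark that near $M$ one must ``instead use the face property of $F$'' also shows the mechanism is not yet pinned down.

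The missing idea is to rewrite every point of $\widehat F$ on one side against a \emph{single fixed} point of $\widehat F$ on the other side. Convexity of $\widehat F$ then shows that the crossing point lies in $\widehat F\cap M=F$. Concretely:
\begin{itemize}
\item If $\widehat F\subset M$, then $\widehat F=F$ is closed.
\item Otherwise take $x_0\in\widehat F\setminus M$, say with $h(x_0)>0$, and $tx_0+(1-t)y_0\in F$. Then $t<1$, $h(y_0)=-t\,h(x_0)/(1-t)<0$, and $y_0\in\widehat F$. So $\widehat F$ contains points $y_\pm$ with $h(y_+)>0>h(y_-)$.
\item For $x\in\widehat F$ with $h(x)\ge 0$, set $t_x=|h(y_-)|/(h(x)+|h(y_-)|)$. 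Then $w_x=t_xx+(1-t_x)y_-$ satisfies $h(w_x)=0$, and $w_x\in\widehat F$ by convexity. Hence $w_x\in F$, and $t_x\ge t_0:=|h(y_-)|/(\sup_K|h|+|h(y_-)|)>0$ uniformly.
\item Let $x_\alpha\to x$ be a net in $\widehat F\cap\{h\ge 0\}$. Then $t_{x_\alpha}\to t_x\ge t_0$ by continuity of $h$. Also $w_{x_\alpha}\to t_xx+(1-t_x)y_-$, which lies in $F$ because $F$ is closed. Thus $x\in\widehat F$.
\item The same argument with $y_+$ handles $\{h\le 0\}$.
\end{itemize}
So $\widehat F$ is itself closed, and you may take $F_1=\widehat F$. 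There is no separate difficulty near $M$ (there $t_x\to 1$), and the detour through the intersection of all closed faces containing $F$ is unnecessary. With this step supplied, your proof is complete for continuous $h_i$, which is all the paper needs.
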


In the proof of the following theorem we show that if $M_\C(X,T)$ has finite-type constraints, then any closed face of $M_\C(X,T)$ is exposed.

\realization*

% \begin{theorem}
%     Let $(X,T)$ be a topological dynamical system and $\C \subset C(X)$. If $(X,T,\C)$ has finite type, then for every closed face $K$ of $M_{\C}(X,T)$, there exists a continuous function $\phi \in C(X)$ such that $K = M_{\C}^*(X,T;\phi)$.  
% \end{theorem}

\begin{proof}
Since $M_\C(X,T)$ has finite type constraints, without loss of generality, we can assume that $\mathcal{C}$ is finite. Then, by Lemma \ref{Prop:finitecodim},  $M_\C(X,T)$ is a finite codimensional slice of $M(X,T)$. Since $M_\C(X,T)$ is closed, by Proposition \ref{Prop:Lazar}, we know that for any closed face $K$ in $M_\C(X,T)$, there exists a closed face $F$ in $M(X,T)$ such that $K=M_\C(X,T)\cap F$. Due to the fact that any closed face in $M(X,T)$ is exposed \cite{davies1967generalized,edwards1966minimum}, $F$ is an exposed face and there is a weak$^*$ continuous affine functional $l$ that is defined on $M(X,T)$ such that $l|_F=0$ and $l_{M(X,T)\setminus F}<0$. By Proposition \ref{Prop:JenRepresentation}, there exists a function $\phi\in C(X)$ such that $l(\mu)=\int\phi \, d\mu$ for all $\mu\in M(X,T)$. Hence the function $\phi \in C(X)$ satisfies $M^*_\mathcal{C}(X,T;\phi)=K$. %Because
%\[
%\sup_{\mu\in M_\C(X,T)}\int(-\phi)\ d\mu
%\begin{cases}
%    =0, & \text{if}\ \mu\in K\\
%    <0, & \text{if}\ \mu\in M_\C(X,T)\setminus K
%\end{cases}.
%\]
\end{proof}

\section{Duality} \label{Sect:Duality}
In this section we consider the dual problem to the problem of ergodic optimization with linear constraints. More specifically, we provide a representation of the dual problem in a form that provides a common generalization of duality results in ergodic optimization \cite[Theorem 2.3]{garibaldi2017ergodic} and Kantorovich duality in optimal transport (see \cite[Chapter 5]{villani2009optimal}). 

The following theorem contains our main duality result. It is closely related to the Kantorovich duality with linear constraints, first established in \cite{zaev2015monge}.

\begin{theorem}
\label{Thm:Kanduality}
Suppose $X$ is a compact metrizable space, $H$ is a linear subspace of $C(X)$,  $W\subset C(X)$ is a linear subspace with $\mathbf{1}\in W$, and $\nu$ is a positive linear functional defined on $W$ with $\nu(\mathbf{1})=1$. Define
\[
\Pi_ H(\nu)=\left\{\mu\in M(X): \mu|_W=\nu, \, \text{and }  \forall f\in   H, \,  \int f \, d\mu=0 \right\}.
\]
Then for any objective function $\phi\in C(X)$, we have
\begin{equation}
\label{Eqn:duality}
\inf_{\mu\in\Pi_ H(\nu)}\int\phi \, d\mu=\sup_{\substack{f+w\leq\phi \\ f\in   H, w\in W}}\nu(w).
\end{equation}
% where $w\in W$. \textcolor{blue}{The statement of this theorem needs to be refined. As it stands, $T$ is defined but then never used again? }
\end{theorem}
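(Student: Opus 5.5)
We would prove \eqref{Eqn:duality} by establishing weak duality directly and then constructing, for each fixed $\phi_0$, a measure in $\Pi_H(\nu)$ that attains the dual value, using the Hahn--Banach extension theorem applied to the dual value functional itself. For $\phi\in C(X)$ set
\[
q(\phi)=\sup_{\substack{f+w\leq\phi\\ f\in H,\, w\in W}}\nu(w).
\]

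\emph{Weak duality.} If $\mu\in\Pi_H(\nu)$ and $f+w\le\phi$ with $f\in H$, $w\in W$, then positivity of $\mu$ gives
\[
\int\phi\,d\mu\ge\int f\,d\mu+\int w\,d\mu=0+\nu(w).
\]
Hence the left side of \eqref{Eqn:duality} is at least $q(\phi)$. Here we use the convention $\inf\varnothing=+\infty$.

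\emph{Properties of $q$.} The choice $f=0$, $w=-\|\phi\|\mathbf{1}$ shows $q(\phi)\ge-\|\phi\|>-\infty$. If $f_i+w_i\le\phi_i$ for $i=1,2$, then $(f_1+f_2)+(w_1+w_2)\le\phi_1+\phi_2$, so $q$ is superadditive. Scaling an admissible pair shows $q$ is positively homogeneous for $t>0$. We then split into two cases according to $q(0)$, which satisfies $q(0)\ge\nu(0)=0$.

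If $q(0)>0$, homogeneity forces $q(0)=+\infty$. Superadditivity gives $q(\phi)\ge q(\phi)+q(0)$ read appropriately, so $q(\phi)=+\infty$ for every $\phi$. Weak duality then forces $\Pi_H(\nu)=\varnothing$, and both sides of \eqref{Eqn:duality} equal $+\infty$. So we may assume $q(0)=0$. Then $q(\phi)+q(-\phi)\le q(0)=0$, so $q$ is finite with $q(\phi)\le -q(-\phi)\le\|\phi\|$, and $p:=-q$ is a finite sublinear functional on $C(X)$.

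\emph{Constructing an optimal measure.} Fix $\phi_0$ and define $L(t\phi_0)=t\,q(\phi_0)$ on $\mathrm{span}\{\phi_0\}$. We claim $L\ge q$ there:
\begin{itemize}
\item for $t\ge0$ this is equality by homogeneity;
\item for $t<0$ we have $t\,q(\phi_0)=-q(|t|\phi_0)\ge q(-|t|\phi_0)$ by the inequality $q(a)+q(-a)\le0$.
\end{itemize}
Equivalently, $-L\le p$ on $\mathrm{span}\{\phi_0\}$. The Hahn--Banach theorem extends $L$ to a linear functional on $C(X)$ with $L\ge q$ everywhere. We now read off the properties of $L$:
\begin{itemize}
\item \emph{Positivity.} If $\phi\ge0$, the pair $f=0$, $w=0$ is admissible, so $L(\phi)\ge q(\phi)\ge0$. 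Hence $L$ is positive, and therefore bounded.
\item \emph{Vanishing on $H$.} For $f\in H$, the pair $(f,0)$ is admissible for $f$, so $L(f)\ge q(f)\ge0$. Applying this to $-f$ gives $L(-f)\ge0$, so $L|_H=0$.
\item \emph{Agreement with $\nu$ on $W$.} For $w\in W$ we have $L(w)\ge q(w)\ge\nu(w)$ and $L(-w)\ge q(-w)\ge-\nu(w)$, so $L|_W=\nu$. In particular $L(\mathbf{1})=1$.
\end{itemize}
By the Riesz representation theorem, $L$ is integration against some $\mu\in M(X)$, and the three properties say exactly that $\mu\in\Pi_H(\nu)$. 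Moreover $\int\phi_0\,d\mu=L(\phi_0)=q(\phi_0)$. Combined with weak duality, this gives \eqref{Eqn:duality}, with the infimum attained.

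The main obstacle is arranging the Hahn--Banach step so that the extension dominates $q$ in the correct direction, which is what makes positivity, vanishing on $H$ and agreement with $\nu$ on $W$ all automatic. The second delicate point is the degenerate case $q(0)=+\infty$, which corresponds to $\Pi_H(\nu)=\varnothing$ and must be handled separately as above.
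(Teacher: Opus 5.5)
Your proof is correct, and it takes a genuinely different route from the paper's.

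The paper works in two stages. It first proves the $H$-free duality $\inf_{\mu\in\Pi(\nu)}\int\phi\,d\mu=\sup_{w\le\phi,\,w\in W}\nu(w)$ (Proposition~\ref{Prop:Kanduality}) by Hahn--Banach with the upper envelope $U(g)=\inf\{\nu(w):w\ge g\}$, extending $\nu$ first from $W$ and then from $W+\R\phi$. It then treats $H$ as a space of Lagrange multipliers: it writes the dual value as $\sup_{f\in H}\inf_{\mu\in\Pi(\nu)}\int(\phi-f)\,d\mu$, exchanges $\sup$ and $\inf$ by the minimax theorem (Proposition~\ref{Prop:minmax}) on the compact convex set $\Pi(\nu)$, and notes that $\sup_{f\in H}\int(\phi-f)\,d\mu$ equals $\int\phi\,d\mu$ on $\Pi_H(\nu)$ and $+\infty$ off it.

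You instead make a single Hahn--Banach extension from $\mathrm{span}\{\phi_0\}$, dominated by the superlinear dual value $q$ itself. The domination $L\ge q$ forces positivity, $L|_H=0$ and $L|_W=\nu$ all at once. This buys you several things: you need neither a minimax theorem nor compactness of $\Pi(\nu)$, you get attainment of the infimum explicitly, and your dichotomy $q(0)\in\{0,+\infty\}$ disposes of the infeasible case cleanly. It even shows the positivity hypothesis on $\nu$ is superfluous, since a non-positive $\nu$ just yields $q(0)=+\infty$.

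In fact $q(0)=0$ holds exactly when $f+w\mapsto\nu(w)$ is a well-defined positive functional on $H+W$. So your argument is in effect Proposition~\ref{Prop:Kanduality} applied directly to the enlarged space $H+W$. What the paper's route buys in return is a modular structure: it isolates the classical Kantorovich-type duality and makes the Lagrangian role of the constraint space $H$ explicit.
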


Before proving Theorem \ref{Thm:Kanduality}, we first establish the following duality result without linear constraints.
\begin{proposition}
\label{Prop:Kanduality}
Suppose $X$ is a compact metrizable space, $W\subset C(X)$ is a linear subspace with $\mathbf{1}\in W$, and $\nu$ is a positive linear functional defined on $W$ with $\nu(\mathbf{1})=1$. Define
\[
\Pi(\nu)=\{\mu\in M(X): \mu|_W=\nu\}.
\]
Then for any objective function $\phi\in C(X)$, 
\[
\inf_{\mu\in\Pi(\nu)}\int\phi \, d\mu=\sup_{\substack{w\leq\phi,\\ w\in W}}\nu(w).
\]
\end{proposition}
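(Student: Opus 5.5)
We will prove the two inequalities separately. The easy direction (weak duality) is immediate: if $\mu\in\Pi(\nu)$ and $w\in W$ satisfies $w\le\phi$, then positivity of $\mu$ gives $\int\phi\,d\mu\ge\int w\,d\mu=\nu(w)$. Taking the supremum over such $w$ and the infimum over $\mu$ yields
\[
\inf_{\mu\in\Pi(\nu)}\int\phi\,d\mu\ \ge\ \sup_{w\le\phi,\,w\in W}\nu(w).
\]
The convention $\inf\varnothing=+\infty$ is harmless here, since the construction below shows $\Pi(\nu)\neq\varnothing$.

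For the reverse inequality we use a Hahn--Banach extension, in the same spirit as the proof of Theorem \ref{Thm:Existence}. Define the sublinear functional
\[
p(g)=\inf\{\nu(w): w\in W,\ w\ge g\}.
\]
Since $\mathbf{1}\in W$, we have $\|g\|\mathbf{1}\ge g$, so the set is nonempty and $p(g)\le\|g\|$. If $w\ge g$, then $w+\|g\|\mathbf{1}\ge 0$, and positivity of $\nu$ gives $\nu(w)\ge-\|g\|$. Hence $p(g)\ge -\|g\|$, so $p$ is finite. Positive homogeneity and subadditivity of $p$ follow directly from the definition.

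On $W$ we have $p(w)=\nu(w)$: the inequality $p\le\nu$ comes from taking $w$ itself, and $\nu(w')\ge\nu(w)$ for $w'\ge w$ by positivity. Now extend $\nu$ from $W$ to the space $W+\R\phi$ by setting
\[
\ell(w+t\phi)=\nu(w)+t\,s,\qquad s:=-p(-\phi)=\sup\{\nu(w):w\le\phi\}.
\]
This $s$ is exactly the dual value. The main step is to check that $\ell\le p$ on $W+\R\phi$; this is the standard one-dimensional Hahn--Banach step. For $t<0$, the inequality $\ell(w+t\phi)\le p(w+t\phi)$ reduces, after dividing by $|t|$, to
\[
\nu(w')-s\le p(w'-\phi)\quad\text{for all } w'\in W.
\]
Unwinding the definitions, this says $\nu(w')-\nu(v)\le s$ whenever $v\in W$ and $v\ge w'-\phi$. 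That holds because $w'-v\le\phi$, so $\nu(w'-v)\le s$ by the definition of $s$. For $t>0$ we need
\[
\nu(w')+s\le p(w'+\phi),
\]
i.e.\ $\nu(v)-\nu(w')\ge s$ whenever $v\ge w'+\phi$. This holds since $v-w'\ge\phi$ and, for any $u\le\phi$, positivity of $\nu$ gives $\nu(v-w')\ge\nu(u)$. If $\phi\in W$, the extension step is skipped and consistency is immediate. I expect this bookkeeping to be the only delicate point, mainly in checking well-definedness when $\phi\in W$.

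By Hahn--Banach, $\ell$ extends to a linear functional $L$ on $C(X)$ with $L\le p$. Then $g\le 0$ implies $L(g)\le p(g)\le\nu(0)=0$, so $L$ is positive. Also $L(\mathbf{1})=\nu(\mathbf{1})=1$, and $|L(g)|\le\|g\|$. By the Riesz representation theorem, as used in Lemma \ref{Lem:Existence}, $L$ is integration against some $\mu\in M(X)$. This $\mu$ satisfies $\mu|_W=\nu$, so $\mu\in\Pi(\nu)$, and $\int\phi\,d\mu=s$. Therefore the infimum is at most $s$, equality holds, and the infimum is attained.
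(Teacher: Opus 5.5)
Your proposal is correct and follows essentially the same route as the paper: the same sublinear majorant $p=U$, the same one-dimensional extension to $W+\R\phi$ that assigns $\phi$ the value $-U(-\phi)=\sup\{\nu(w): w\le\phi\}$, a Hahn--Banach extension, and Riesz representation. Your direct weak-duality argument and your derivation of positivity from $L\le p$ are slightly cleaner than the paper's bookkeeping via the infimum over all $U$-dominated extensions, but the substance is identical.
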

\begin{proof} %Since the linear functional $\nu: W \to \mathbb R$ is bounded, it is continuous with respect to the supremum norm. 
We begin by defining the functional $U: C(X)\to \R$ by setting
\[
U(g)=\inf_{w\in W}\{\nu(w): w\geq g\}.
\]

Let us first show that $U$ is well-defined and bounded. Since $\|g\|\cdot\mathbf{1}\geq g$ and $\|g\|\cdot\mathbf{1}$ is a function in $W$, by the definition of $U$, we have $U(g)\leq \nu(\|g\|\cdot\mathbf{1})=\|g\|\cdot\nu(\mathbf{1})=\|g\|$. Additionally, since $g\geq -\|g\|\cdot\mathbf{1}$ and $-\|g\|\cdot\mathbf{1}\in W$, for any $w \in W$ satisfying $w\geq g\geq -\|g\|\cdot\mathbf{1}$, we have that $w+\|g\|\cdot\mathbf{1}$ is in $W$ and $w+\|g\|\cdot\mathbf{1}\geq 0$. Thus, $\nu(w+\|g\|\cdot\mathbf{1})\geq 0$ by the positivity of $\nu$, which implies $\nu(w)\geq\nu(-\|g\|\cdot\mathbf{1})=-\|g\|$ by linearity. Since $\nu(w)\geq -\|g\|$ for any $w\geq g$, taking the infimum over all $w\in W$ such that $w\geq g$ gives $U(g)\geq -\|g\|$. Therefore, combining the inequalities in this paragraph, we have $|U(g)|\leq \|g\|$ for any $g\in C(X)$, and hence $U$ is well-defined and bounded.

We claim that $U$ is subadditive and positively homogeneous. To prove the subadditivity, let $f,g\in C(X)$, and suppose $w_1, w_2 \in W$ satisfy $w_1 \geq f$ and $w_2 \geq g$. Note that $w_1 + w_2 \in W$ and $w_1 + w_2 \geq f+ g$. Hence we have that
\begin{align*}
U(f+g)&=\inf_{w\in W}\{\nu(w): w\geq (f+g)\}\\
& \leq \nu(w_1+w_2) \\
& = \nu(w_1)+\nu(w_2).
\end{align*}
Taking the infimum over all $w_1$ and $w_2$ satisfying the above conditions, we obtain
\begin{align*}
U(f+g)&=\inf_{w\in W}\{\nu(w): w\geq (f+g)\}\\
&\leq \inf_{w\in W}\{\nu(w): w\geq f\}+\inf_{w\in W}\{\nu(w): w\geq g\}\\
&=U(f)+U(g).
\end{align*}
Next let us show that $U$ is positively homogeneous. Indeed, for any $\alpha\in\mathbb R^+$, we have
\[
\begin{split}
U(\alpha g)&=\inf_{w\in W}\{\nu(w): w\geq\alpha g\}\\
&=\inf_{(\sfrac{w}{\alpha})\in W}\{\alpha \nu(\sfrac{w}{\alpha}): \sfrac{w}{\alpha}\geq g\}\\
&=\alpha U(g).
\end{split}
\]
Additionally, for any $t\in\mathbb R$, we claim that $U(tg)\geq tU(g)$. To verify this claim, it suffices to show that $U(-g)\geq-U(g)$, since then we have $U(tg)=U((-1)(-t)g)\geq-U((-t)g)=tU(g)$ whenever $t<0$. Since $\nu$ is a positive linear functional, we have
\[
\sup_{w\in W}\{\nu(w): w\leq g\}\leq\inf_{w\in W}\{\nu(w): w\geq g\}.
\]
Then since $\inf E=-\sup(-E)$, %the property $U(-g)\geq -U(g)$ is given by
we see that
\[
\begin{split}
U(-g)&=\inf_{w\in W}\{\nu(w): w\geq -g\}=\inf_{w\in W}\{\nu(w): -w\leq g\}\\
&=\inf_{w\in W}\{\nu(-w): w\leq g\}=-\sup_{w\in W}\{\nu(w): w\leq g\}\\
&\geq -\inf_{w\in W}\{\nu(w): w\geq g\}=-U(g).
\end{split}
\]

% Therefore, by combining with positive homogeneity, for any $t\in\mathbb R$ we have
% \[
% U(t\cdot g)
% \begin{cases}
%     =t\cdot U(g) &\text{for}\ t\geq 0,\\
%     \geq -U(-t\cdot g)=t\cdot U(g) &\text{for}\ t<0.
% \end{cases}
% \]

Thus, $U: C(X)\to \mathbb R$ is a positive homogeneous and subadditive functional. Moreover, by the definition of $U$, the linear functional $\nu: W\to\mathbb R$ equals $U$ on the linear subspace $W$. Therefore, by the Hahn-Banach theorem for positive linear functionals \cite[Theorem 2.38]{buhler2018functional}, $\nu$ can be extended to a positive linear functional $P$ on $C(X)$ such that $P\leq U$ on $C(X)$ and $P|_W=\nu$. 

% We claim that $P$ is a positive functional on $C(X)$. If $P$ is not positive, then there exists $g\in  C(X)$ such that $g\geq 0$ and $P(g)<0$. And we have the following result,
% \[
% 0<P(-g)\leq U(-g)=\inf_{w\in W}\{\nu(w): w\geq -g\}\leq 0.
% \]
% It is a contradiction. Therefore, $P$ is positive.

For $\phi\in C(X)$, let us define a new linear operator $\nu_\phi:\{w+t\phi: t\in\mathbb R, w\in W\}\to\mathbb R$ by requiring that $\nu_{\phi}$ be linear and
\begin{equation}
    \begin{cases}
        \nu_\phi|_W=\nu\ \text{and}\ \nu(-\phi)=U(-\phi), &\text{if}\ \phi\notin W\\
        \nu_\phi=\nu, &\text{if}\ \phi\in W
    \end{cases}.
\end{equation}
Notice that $\nu_\phi|_W=\nu$ and $\nu(-\phi)=U(-\phi)$ always hold (regardless of whether $\phi$ is in $W$ or not). By linearity of $\nu_\phi$ and the fact that $U(tg)\geq tU(g)$ for any $t\in\R$, we have
\[
\nu_\phi(t\phi)= (-t) \nu_{\phi}(-\phi) = (-t)U(-\phi)\leq U(t\phi).
\]
We claim that $\nu_\phi$ is bounded by $U$ on its domain. Indeed, for $w\in W$ and $t\in\mathbb R$, we have
\[
\begin{split}
\nu_\phi(w+t\phi)&=\nu_\phi(w)+\nu_\phi(t\phi) \\
& \leq \nu(w)+U(t\phi)\\
&=\nu(w)+\inf_{f\in W}\{\nu(f): f\geq t\phi\}\\
&=\inf_{f\in W}\{\nu(w+f): f\geq t\phi\}\\
&=\inf_{h\in W}\{\nu(h): h\geq w+t\phi\}=U(w+t\phi).
\end{split}
\]
By Hahn-Banach theorem, we can extend $\nu_\phi$ to a linear functional $P_\phi:  C(X)\to \mathbb R$ such that
\[
P_\phi|_{\{w+t\phi: t\in\mathbb R, w\in W\}}=\nu_\phi,\ P_\phi|_W=\nu,\ P_\phi(-\phi)=U(-\phi)\ \text{and}\ P_\phi\leq U.
\]
Since $P$ is bounded by $U$ on $C(X)$, we have
\[
\sup_P P(-\phi)\leq U(-\phi)=\inf_{w\in W}\{\nu(w): w\geq -\phi\},
\]
where the supremum is taken over all possible linear extensions that extend $\nu$ and are bounded by $U$. By linearity of $P_\phi$,
\[
\begin{split}
P_\phi(\phi)&=-P_\phi(-\phi)=-U(-\phi)\\
&=-\inf_{w\in W}\{\nu(w): w\geq -\phi\}=\sup_{w\in W}\{\nu(w): w\leq\phi\}.   
\end{split}
\]
Due to the fact that $\sup_P P(-\phi)\leq\inf_{w\in W}\{\nu(w): w\geq -\phi\}$, we have
\[
-\sup_P P(-\phi)\geq-\inf_{w\in W}\{\nu(w): w\geq -\phi\},
\]
and therefore
\[
\inf_P P(\phi)\geq\sup_{w\in W}\{\nu(w): w\leq \phi\}=P_\phi(\phi).
\]
Since $P_\phi$ extends $\nu$ and is dominated by $U$, we have
\[
\inf_P P(\phi)=P_\phi(\phi)=\sup_{w\in W}\{\nu(w): w\leq\phi\}.
\]

Since $P$ is a positive linear functional on $C(X)$, by the Riesz Representation Theorem for $C(X)^*$ (see \cite[Corollary 14.15]{aliprantis2006infinite}), for $P\in C(X)^*$, there is a unique signed Borel measure $\hat{\mu}$ on $\mathcal{B}(X)$ such that
\[
P(\phi)=\int_X \phi \, d\hat{\mu}\ \text{for all}\ \phi\in  C(X).
\]
For any $w\in W$, since $P$ is an extension of $\nu$, we have $P(w)=\nu(w)=\int_X w \, d\hat{\mu}$, which implies $\hat{\mu}|_W=\nu$.

Moreover, since $\mathbf{1}\in W$ and $P(\mathbf{1})=\nu(\mathbf{1})=1$, we have
\[
P(\mathbf{1})=\int_X\mathbf{1} \, d\hat{\mu}=\hat{\mu}(X)=1,
\]
so $\hat{\mu}$ is a probability measure. Therefore, the positive linear functional $P$ with $P|_W=\nu$ can be identified with a probability measure $\hat{\mu}$ in $\Pi(\nu)$.
% , i.e.,
% \[
% P\simeq\hat{\mu}\in\Pi(\nu).
% \]
Hence, we obtain the following Kantorovich duality:
\[
\inf_{P}P(\phi)=\inf_{\mu\in\Pi(\nu)}\int\phi \, d\mu=\sup_{\substack{w\leq \phi\\w\in W}}\nu(w).
\]
\end{proof}

The next result is a general version of the minmax theorem, which we use in our proof of Theorem \ref{Thm:Kanduality}. A proof of the minmax theorem can be found in \cite{adams2012function}.
\begin{proposition}[\cite{adams2012function}, Theorem 2.4.1]
\label{Prop:minmax}
Let $K$ be a compact convex subset of a Hausdorff topological vector space, $Y$ be a convex subset of an arbitrary vector space, and $h$ be a real-valued function $(\leq +\infty)$ on $K\times Y$, which is lower semi-continuous in $x$ for each fixed $y$, convex on $K$, and concave on $Y$. Then
\[
\min_{x\in K}\sup_{y\in Y}h(x,y)=\sup_{y\in Y}\min_{x\in K}h(x,y).
\]
\end{proposition}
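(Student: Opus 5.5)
The inequality $\min_{x\in K}\sup_{y\in Y}h(x,y)\geq \sup_{y\in Y}\min_{x\in K}h(x,y)$ is immediate, since $h(x,y)\geq \min_{x'\in K}h(x',y)$ for every $x$ and $y$. The minima exist because $h(\cdot,y)$, and hence $\sup_y h(\cdot,y)$, is lower semi-continuous on the compact set $K$. For the reverse inequality, I fix a real number $c<\min_{x}\sup_{y}h(x,y)$ and aim to produce a single $y^*\in Y$ with $\min_x h(x,y^*)>c$. The plan has two steps: a compactness reduction to finitely many $y$'s, and then a finite-dimensional separation argument that uses concavity in $y$ to merge them.

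\emph{Reduction to finitely many $y$.} For each $y\in Y$, set $K_y=\{x\in K: h(x,y)\leq c\}$. By lower semi-continuity of $h(\cdot,y)$, each $K_y$ is closed in $K$. The intersection $\bigcap_{y}K_y$ is empty, since a point in it would satisfy $\sup_y h(x,y)\leq c$. By compactness of $K$ there are $y_1,\dots,y_n\in Y$ with $K_{y_1}\cap\dots\cap K_{y_n}=\varnothing$, that is, $\max_i h(x,y_i)>c$ for all $x\in K$. The function $x\mapsto\max_i h(x,y_i)$ is lower semi-continuous, so it attains a minimum $m>c$ on $K$. I then fix $c'$ with $c<c'<m$ (if $m=+\infty$, any $c'>c$ works).

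\emph{Finite-dimensional separation.} Let
\[
A=\{v\in\R^n:\ \exists x\in K \text{ with } h(x,y_i)\leq v_i \text{ for all } i\}.
\]
The set $A$ is convex because $h$ is convex in $x$: if $v,v'\in A$ are witnessed by $x,x'$, then $tx+(1-t)x'$ witnesses $tv+(1-t)v'$. It is also disjoint from the open convex set $O=(-\infty,c')^n$, since $v\in A\cap O$ would give $\max_i h(x,y_i)<c'<m$. If $A$ is empty, every $x$ has some $h(x,y_i)=+\infty$, and I simply take $\lambda_i=1/n$. Otherwise, the Hahn--Banach separation theorem in $\R^n$ gives a nonzero $\lambda$ with $\lambda\cdot v\geq\lambda\cdot u$ for all $v\in A$ and $u\in O$. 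Letting individual coordinates of $u$ tend to $-\infty$ forces $\lambda\geq 0$, so I normalize to $\sum\lambda_i=1$. Letting $u\to(c',\dots,c')$ then gives $\sum_i\lambda_i h(x,y_i)\geq c'$ for every $x$: directly when all $h(x,y_i)$ are finite, and trivially when one of them is $+\infty$.

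\emph{Conclusion.} Let $y^*=\sum_i\lambda_i y_i\in Y$, which lies in $Y$ because $Y$ is convex. Concavity of $h(x,\cdot)$ gives $h(x,y^*)\geq\sum_i\lambda_i h(x,y_i)\geq c'>c$ for every $x\in K$. Hence $\sup_y\min_x h(x,y)\geq \min_x h(x,y^*)>c$, and letting $c\uparrow\min_x\sup_y h(x,y)$ finishes the proof. The main obstacle is the second step: setting up the convex set $A$ so that both the separation and the $+\infty$ values are handled cleanly. It is also the only place where convexity in $x$ and concavity in $y$ enter, so the bookkeeping there needs care.
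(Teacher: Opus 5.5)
The compactness reduction, the convexity of $A$, the sign and normalization of $\lambda$, and the final concavity step are all fine, but the sentence ``trivially when one of them is $+\infty$'' is wrong. This is exactly where the $+\infty$ values allowed in the statement cause trouble, so there is a genuine gap. Your set $A$ only sees points of the common effective domain $D=\{x\in K: h(x,y_i)<+\infty \text{ for all } i\}$. The separation therefore gives $\sum_i\lambda_i h(x,y_i)\ge c'$ only for $x\in D$. For $x\notin D$ the sum is $+\infty$ only if $\lambda_j>0$ for some $j$ with $h(x,y_j)=+\infty$, and nothing prevents the separating $\lambda$ from vanishing on precisely those coordinates. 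With $0\cdot(+\infty)=0$ your claimed inequality then fails. With $0\cdot(+\infty)=+\infty$ the concavity step $h(x,y^*)\ge\sum_i\lambda_i h(x,y_i)$ is no longer justified, since $y^*$ does not involve $y_j$.

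Here is a concrete instance. Take $K=Y=[0,1]$, $h(x,0)=x$, and for $s\in(0,1]$ set $h(x,s)=(1-s)x$ if $x\ge 1/2$ and $h(x,s)=+\infty$ if $x<1/2$. This satisfies all the hypotheses, and $\min_x\sup_s h(x,s)=1/2$. With $c=0.3$, your compactness step may return $y_1=0$, $y_2=1$, since the sets $K_{y}$ for $y=0$ and $y=1$ are $[0,0.3]$ and $[1/2,1]$, which are disjoint. Then $m=1/2$ and you may take $c'=0.4$, so $A=\{v: v_1\ge 1/2,\ v_2\ge 0\}$. Now $\lambda=(1,0)$ is a legitimate separator of $A$ from $(-\infty,0.4)^2$, yet it yields $y^*=0$ with $\min_x h(x,0)=0<c$.

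The repair is short and uses the slack $c'>c$ you already built in: make all weights strictly positive. Each $h(\cdot,y_i)$ is lower semi-continuous on the compact set $K$ and never equals $-\infty$. So there is $B\ge 0$ with $h(\cdot,y_i)\ge -B$ on $K$ for all $i$. Replace $\lambda$ by $\lambda^\varepsilon=(1-\varepsilon)\lambda+\frac{\varepsilon}{n}(1,\dots,1)$. For $x\in D$,
\[
\sum_i\lambda^\varepsilon_i h(x,y_i)\ \ge\ (1-\varepsilon)c'-\varepsilon B\ >\ c
\]
once $\varepsilon>0$ is small. For $x\notin D$ the sum is $+\infty$, because every weight is now positive. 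With $y^*=\sum_i\lambda^\varepsilon_i y_i$, concavity gives $h(x,y^*)>c$ for every $x\in K$, hence $\min_x h(x,y^*)>c$, and the rest of your argument goes through unchanged. In the example above this gives $y^*=\varepsilon/2$ and $\min_x h(x,\varepsilon/2)=(1-\varepsilon/2)/2>0.3$.

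For comparison, the paper gives no proof of its own, only the citation to Adams--Hedberg. In the one place it uses this result, the function $g(\mu,f)=\int(\phi-f)\,d\mu$ is finite-valued, so $D=K$ there and your argument is already complete as written for that application.
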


Now we are ready to prove Theorem \ref{Thm:Kanduality}. 
\begin{proof}[Proof of \Cref{Thm:Kanduality}]
% First, we have
% \[
% \begin{split}
% \inf_{\mu\in\Pi_H(\nu)}\int\phi \, d\mu &\geq \inf_{\mu\in\Pi_H(\nu)}\sup_{\substack{f+w\leq\phi\\ w\in W,f\in H}}\int (f+w) \, d\mu\\
% &=\inf_{\mu\in\Pi_H(\nu)}\sup_{\substack{f+w\leq\phi\\ w\in W,f\in H}}\nu(w)=\sup_{\substack{f+w\leq\phi\\ w\in W,f\in H}}\nu(w).
% \end{split}
% \]
% So it suffices to show
% \begin{equation}\label{Eqn:IneqThm6.1}
% \inf_{\mu\in\Pi_H(\nu)}\int\phi \, d\mu\leq \sup_{\substack{f+w\leq\phi\\ w\in W,f\in H}}\nu(w).
% \end{equation}

By Proposition \ref{Prop:Kanduality}, the right hand side of (\ref{Eqn:duality}) can be written as
\begin{equation}\label{Eqn1:Thm6.1}
    \sup_{\substack{f+w\leq\phi \\ w\in W, f\in  H}}\nu(w)=\sup_{f\in  H}\sup_{\substack{w\leq\phi-f\\ w\in W}}\nu(w)=\sup_{f\in  H}\inf_{\mu\in\Pi(\nu)}\int (\phi-f) \, d\mu.
\end{equation}
Let $g : \Pi(\nu) \times H \to \R$ be given by $g(\mu,f)=\int (\phi-f) \, d\mu$. In the following, we show that $g$ satisfies all the assumptions in Proposition \ref{Prop:minmax}.
    First, let us show that for each $f\in H$ fixed, $g(\cdot,f)$ is lower semi-continuous on $\Pi(\nu)$. Indeed, let $\{\mu_k\}$ be a sequence of measures in $\Pi(\nu)$ such that $\mu_k$ converges to $\mu$ in weak$^*$ topology. Then,
    \[
    \lim_{k\to\infty}g(\mu_k,f)=\int(\phi-f) \, d\mu_k=\int(\phi-f) \, d\mu=g(\mu,f).
    \]
    Next, let us show that for each fixed $f\in H$, the function $g(\cdot,f)$ is convex on $\Pi(\nu)$. For any $\mu_1,\mu_2\in\Pi(\nu)$, note that $\alpha\mu_1+(1-\alpha)\mu_2\in\Pi(\nu)$, and
    \[
    \begin{split}
    g(\alpha\mu_1+(1-\alpha)\mu_2,f) &=\int(\phi-f) \, d[\alpha\mu_1+(1-\alpha)\mu_2]\\
    &=\alpha\int(\phi-f) \, d\mu_1+(1-\alpha)\int(\phi-f) \, d\mu_2\\
    &=\alpha g(\mu_1,f)+(1-\alpha)g(\mu_2,f).
    \end{split}
    \]
    Lastly, let us show that for every $\mu\in\Pi(\nu)$, the function $g(\mu,\cdot)$ is concave on $H$. For any $f_1,f_2\in H$ and any $\beta\in(0,1)$,
    \[
    \begin{split}
    g(\mu,\beta f_1+(1-\beta)f_2) &=\int[\phi-(\beta f_1+(1-\beta)f_2)] \, d\mu\\
    &=\beta\int(\phi-f_1) \, d\mu+(1-\beta)\int(\phi-f_2) \, d\mu\\
    &=\beta g(\mu,f_1)+(1-\beta)g(\mu,f_2).
    \end{split}
    \]
Now, let $K=\Pi(\nu)$, $Y=H$ and let $g(\mu,f)=\int (\phi-f) \, d\mu$. By Proposition \ref{Prop:minmax}, we have
\begin{equation}\label{Eqn2:Thm6.1}
    \sup_{f\in  H}\inf_{\mu\in\Pi(\nu)}\int (\phi-f) \, d\mu=\inf_{\mu\in\Pi(\nu)}\sup_{f\in H}\int (\phi-f) \, d\mu.
\end{equation}

We distinguish two cases. First, consider the case when $\Pi_H(\nu)=\varnothing$. Then by convention we have $\inf_{\mu\in\Pi_H(\nu)}\int\phi\, d\mu=\infty$. We claim that $\inf_{\mu\in\Pi(\nu)}\sup_{f\in  H}\int (\phi-f) \, d\mu=\infty$. If $\Pi(\nu)$ is empty, then this equality also holds by convention. Now suppose $\mu \in \Pi(\nu)$. Since $\Pi_H(\nu)$ is empty, $\mu$ is not in $\Pi_H(\nu)$, and therefore there exists $f_1 \in H$ such that $\int f_1 \, d\mu \neq 0$. After possibly replacing $f_1$ by $-f_1$, we assume without loss of generality that $\int f_1 \, d\mu <0$. Letting $f = \alpha f_1$ for arbitrarily large $\alpha >0$, we see that $\sup_{f\in H}\int(\phi-f) \, d\mu=\infty$. Since $\mu \in \Pi(\nu)$ was arbitrary, we obtain $\inf_{\mu\in\Pi(\nu)}\sup_{f\in  H}\int (\phi-f) \, d\mu=\infty$. We therefore see that if $\Pi_H(\nu)=\varnothing$, then
\[
\inf_{\mu\in\Pi(\nu)}\sup_{f\in  H}\int (\phi-f) \, d\mu=\inf_{\mu\in\Pi_H(\nu)}\int\phi\, d\mu,
\]
and combined with (\ref{Eqn1:Thm6.1}) and (\ref{Eqn2:Thm6.1}), we conclude that \Cref{Thm:Kanduality} holds in this case.

Now consider the second case when $\Pi_H(\nu)\neq\varnothing$. Let $\mu \in \Pi_H(\nu)$. Then by definition of $\Pi_H(\nu)$, we have
\begin{equation} \label{Eqn:Flowers}
    \sup_{f\in H}\int(\phi-f)\, d\mu=\int\phi\, d\mu < \infty,
\end{equation}
and hence
\begin{equation} \label{Eqn:Butterflies}
    \inf_{\mu\in\Pi(\nu)}\sup_{f\in H}\int (\phi-f) \, d\mu < \infty.
\end{equation}
Now let $\mu \in \Pi(\nu) \setminus \Pi_H(\nu)$. 
% Then there exists $f_1\in H$ such that $\int f_1 \, d\mu<0$ and we can choose $f=\alpha f_1$, when $\alpha\to\infty$, and hence 
By the similar argument to the case $\Pi_H(\nu)=\varnothing$, we have $\sup_{f\in H}\int(\phi-f) \, d\mu=\infty$. By this observation and (\ref{Eqn:Butterflies}), we see that the infimum in (\ref{Eqn:Butterflies}) can be taken over the smaller set $\Pi_H(\nu)$ (instead of $\Pi(\nu)$), i.e.,
\begin{equation*}
    \inf_{\mu\in\Pi(\nu)}\sup_{f\in H}\int (\phi-f) \, d\mu = \inf_{\mu\in\Pi_H(\nu)}\sup_{f\in H}\int (\phi-f) \, d\mu.
\end{equation*}
Combining this equation with (\ref{Eqn:Flowers}), we obtain
\begin{equation}\label{Eqn3:Thm6.1}
    \inf_{\mu\in\Pi(\nu)}\sup_{f\in H}\int (\phi-f) \, d\mu = \inf_{\mu\in\Pi_H(\nu)}\sup_{f\in H}\int (\phi-f) \, d\mu = \inf_{\mu \in \Pi_H(\nu)} \int \phi \, d\mu.
\end{equation}

% Let $\mu\in\Pi(\nu)$ be fixed in the right hand side above. First consider the case when $\mu\in\Pi_H(\nu)$. Then we have $\sup_{f\in H}\int(\phi-f)\, d\mu=\int\phi\, d\mu$.  
% % and then taking the infimum over $\Pi(\nu)$ is either finite when $\Pi(\nu)$ is non-empty or $+\infty$ otherwise. 
% Now consider the case hen $\mu\notin\Pi_H(\nu)$. Then there exists $f_1\in H$ such that $\int f_1 \, d\mu<0$ and we can choose $f=\alpha f_1$, when $\alpha\to\infty$, and hence $\sup_{f\in H}\int(\phi-f) \, d\mu=\infty$. Taking infimum over $\Pi(\nu)$ always gives $+\infty$.
% \[
% \sup_{f\in H}\int(\phi-f)\, d\mu=
% \begin{cases}
%     \int\phi\, d\mu, &\text{if}\ \mu\in\Pi_H(\nu)\\
%     +\infty, &\text{otherwise}
% \end{cases},
% \]
% since if $\mu\notin\Pi_H(\nu)$, there exists $f_1\in H$ such that $\int f_1 \, d\mu<0$ and we can choose $f=\alpha f_1$, when $\alpha\to\infty$, $\sup_{f\in H}\int(\phi-f) \, d\mu\to\infty$. 
Finally, by (\ref{Eqn1:Thm6.1}), (\ref{Eqn2:Thm6.1}) and (\ref{Eqn3:Thm6.1}) we conclude \Cref{Thm:Kanduality}.
\end{proof}

Theorem \ref{Thm:Kanduality} can be applied to dynamical systems and optimal transport by making appropriate choices of $H$ and $W$. Below we present some examples.

\begin{example}[The duality of (\ref{Eqn:EO})]
\label{Example:dualEO}
    Suppose $(X,T)$ is a topological dynamical system. If we let $  H=\{g\circ T-g: g\in C(X)\}$, $W=\{c \cdot \mathbf{1}: c\in\mathbb R\}$,  $\nu=\mathrm{Id}$ (the functional mapping the constant function $\lambda \cdot \mathbf{1}$ to $\lambda$), then
    \[
    \Pi_ H(\nu)=\{\mu\in M(X):\mu\circ T^{-1}=\mu\ \text{and}\ \mu(c \cdot \mathbf{1})=\nu(c \cdot \mathbf{1}) = c\}=M(X,T).
    \]
    By Theorem \ref{Thm:Kanduality}, we have
    \begin{equation}
    \label{Eqn:DualityofEO}
        \sup_{\mu\in M(X,T)}\int\phi \, d\mu=\inf_{\substack{g-g\circ T+c\geq \phi\\g\in C(X),c\in\R}}c.
    \end{equation}
    
\end{example}

\begin{remark}
    The duality result in \cite[Theorem 2.3]{garibaldi2017ergodic} is
    \begin{equation}
    \label{Eqn:DualityGarbridini}
        \sup_{\mu\in M(X,T)}\int\phi \, d\mu=\inf_{g\in C(X)}\sup_{x\in X}[g\circ T(x)-g(x)+\phi(x)].
    \end{equation}
    Notice that the right hand side of (\ref{Eqn:DualityGarbridini}) is equal to the right hand side of (\ref{Eqn:DualityofEO}) in \Cref{Example:dualEO}. Therefore, we have recovered \cite[Theorem 2.3]{garibaldi2017ergodic}.
\end{remark}

\begin{example}[The duality of (\ref{Eqn:EOWC})]
\label{Example:dualEOWC}
    Suppose $(X,T)$ is a topological dynamical system and $\C\subset C(X)$. Let $W=\{g\circ T-g+c: g\in C(X), c\in\R\}$. Let $H$ be the smallest closed linear subspace of $C(X)$ containing $\C$, and let $\nu\in M(X,T)$ be any $T$-invariant probability measure. Then
    \[
    \Pi_H(\nu)=\{\mu\in M(X): \mu\circ T^{-1}=\mu\ \text{and}\ \mu(f)=0\ \text{for any}\ f\in\C\}=M_\C(X,T).
    \]
    By Theorem \ref{Thm:Kanduality}, we have
    \[
    \sup_{\mu\in M_\C(X,T)}\int\phi \, d\mu=\inf_{\substack{f+g-g\circ T+c\geq \phi\\f\in H,g\in C(X),c\in\R}}c.
    \]
    This establishes the \Cref{Thm:Duality}.
\end{example}

\begin{example}[Duality of ergodic optimal transport]
\label{Example:LinearOT}
    Let $(X_i,T_i),\ i=1,2,\dots,n$ be a sequence of topological dynamical systems, where $X_1,\dots,X_n$ are compact and metrizable spaces and $T_i:X_i\to X_i$ are continuous transformations for $i=1,\dots,n$. Let $X=X_1\times\dots\times X_n$,  $T=T_1\times\dots\times T_n$ and $\pi_i: X\to X_i$ be the projection map. If we let 
    \[
    H=\{\varphi\circ T-\varphi: \varphi\in C(X)\},
    \]
    \[
    W=\{f_1\circ\pi_1+\dots+f_n\circ\pi_n: f_1\in C(X_1),\dots,f_n\in C(X_n)\},
    \]
    and $\nu=\mu_1\oplus\mu_2\oplus\dots\oplus\mu_n$. Then
    \[
    \Pi_ H(\nu)=\{\mu\in M(X): \mu\circ\pi_i^{-1}=\mu_i,\ \mu\circ T^{-1}=\mu\}=M(X,T)\cap\Pi(\mu_1,\dots,\mu_n)
    \]
    is the set of invariant couplings. By Theorem \ref{Thm:Kanduality}, for any cost function $c\in C(X)$,
    \[
    \inf_{\pi\in M(X,T)\cap\Pi(\mu_1,\dots,\mu_n)}\int c \, d\pi=\sup_{\substack{\varphi\circ T-\varphi+f_1\circ\pi_1+\dots+f_n\circ\pi_n\leq c\\f_1\in C(X_1),\dots,f_n\in C(X_n)\\ \varphi\in C(X)}}\sum_{i=1}^n\int f_i \, d\mu_i. 
    \]
\end{example}

\begin{remark}
Notice that Example \ref{Example:LinearOT} recovers \cite[Theorem 2.1]{zaev2015monge} in the setting of compact metric spaces. We note that applies in the more general setting of Polish spaces. 
%Therefore, Theorem \ref{Thm:Kanduality}, with a more rigid assumption ($X$ is a compact metric space) than the classical optimal transport theory ($X$ is a Polish space), implies \cite[Thm. 2.1]{zaev2015monge}. 
Also, if we let $H=\{0\}$ be the trivial linear subspace, then the duality in \Cref{Example:LinearOT} becomes the regular Kantorovich duality.
\end{remark}

\begin{example}[Duality of relative ergodic optimization]
Suppose $(X,T)$ and $(Y,S)$ are topological dynamical systems. Let $\pi: X\to Y$ be a factor map and $\nu\in M(Y,S)$. Let $H=\{g\circ T-g: g\in C(X)\}$ and $W=\{f\circ\pi: f\in C(Y)\}$, then
\[
\Pi_ H(\nu)=\{\mu\in M(X,T):  \mu\circ\pi^{-1}=\nu\}:=M_\nu(X,T).
\]
By Theorem \ref{Thm:Kanduality}, we have the following duality result:
\[
\sup_{\mu\in M_\nu(X,T)}\int \phi \, d\mu=\inf_{\substack{g-g\circ T+f\circ\pi\leq \phi\\ f\in C(Y),g\in C(X)}} \int f \, d\nu.
\]
\end{example}

\noindent
\textbf{Acknowledgments.} KM gratefully acknowledges support from the National Science Foundation grants DMS-2436227 and DMS-2413929.

\bibliographystyle{amsplain}
\bibliography{references}

\end{document}